\def\ord{{\rm ord}}
\def\ac{{\overline{\rm ac}}}
\def\Var{{\rm Var}}
\def\Def{{\rm Def}}
\def\RDef{{\rm RDef}}
\def\ordjac{{\rm ordJac}}
\def\11{{\mathbf 1}}
\def\AA{{\mathbb A}}
\def\CC{{\mathbb C}}
\def\FF{{\mathbb F}}
\def\LL{{\mathbb L}}
\def\NN{{\mathbb N}}
\def\QQ{{\mathbb Q}}
\def\RR{{\mathbb R}}
\def\ZZ{{\mathbb Z}}
\def\cC{{\mathscr C}}
\def\cK{{\mathcal K}}
\def\cL{{\mathcal L}}
\def\cM{{\mathcal M}}
\def\cO{{\mathcal O}}
\def\cP{{\mathcal P}}
\def\cQ{{\mathcal Q}}
\def\cT{{\mathcal T}}
\def\llp{\mathopen{(\!(}}
\def\rrp{\mathopen{)\!)}}
\newtheorem{theorem}[subsection]{Theorem}
\newtheorem{lem}[subsection]{Lemma}
\newtheorem{cor}[subsection]{Corollary}
\newtheorem{prop}[subsection]{Proposition}
\theoremstyle{definition}
\newtheorem{definition}[subsection]{Definition}
\newtheorem{def-prop}[subsection]{Proposition-Definition}
\newtheorem{def-theorem}[subsection]{Theorem-Definition}
\newtheorem{def-lem}[subsection]{Lemma-Definition}
\theoremstyle{remark}
\theoremstyle{plain}
\numberwithin{equation}{subsection}
\def\boxit#1#2{\setbox1=\hbox{\kern#1{#2}\kern#1}%
\dimen1=\ht1 \advance\dimen1 by #1
\dimen2=\dp1 \advance\dimen2 by #1
\setbox1=\hbox{\vrule height\dimen1 depth\dimen2\box1\vrule}%
\setbox1=\vbox{\hrule\box1\hrule}%
\advance\dimen1 by .4pt \ht1=\dimen1
\advance\dimen2 by .4pt \dp1=\dimen2 \box1\relax}
\newcommand{\mX}{\ensuremath{\mathcal{X}}}
\def\Jac{{\rm Jac}}
\def\Vol{{\rm Vol}}
\begin{document}

\title[Motivic integration in all residue field characteristics]
{Motivic integration in all residue field characteristics for Henselian discretely valued fields of characteristic zero}

\author{Raf Cluckers}

\address{Universit\'e Lille 1, Laboratoire Painlev\'e, CNRS - UMR 8524, Cit\'e Scientifique, 59655
Villeneuve d'Ascq Cedex, France, and,
Katholieke Universiteit Leuven, Department of Mathematics,
Celestijnenlaan 200B, B-3001 Leu\-ven, Bel\-gium\\  }
\email{Raf.Cluckers@math.univ-lille1.fr}
\urladdr{http://math.univ-lille1.fr/$\sim$cluckers}

\author{Fran\c cois Loeser}

\address{Institut de Math\'ematiques de Jussieu,
UMR 7586 du CNRS,
Universit\'e Pierre et Marie Curie,
Paris, France}
\email{loeser@math.jussieu.fr}
\urladdr{http://www.math.jussieu.fr/$\sim$loeser/}


\maketitle

\renewcommand{\partname}{}

\section{Introduction}
Though one can consider
Motivic Integration to have quite satisfactory foundations in  residue characteristic zero
after \cite{CL}, \cite{exp} and  \cite{hk},
much remains to be done
in positive residue characteristic. The aim
of the present paper is to explain how one can extend the formalism and results from \cite{CL}
to mixed characteristic. Other aims are to give an axiomatic approach instead of using only the Denef-Pas language, and to extend the formalism of \cite{CL} to one with richer angular component maps.

\par
Let us start with some motivation.
Let $K$ be a fixed finite field extension of $\QQ_p$ with residue field $\FF_q$ and let $K_d$ denote its unique unramified extension of degree $d$, for $d \geq 1$.
Denote by
$\cO_d$ the ring of integers of $K_d$ and fix a polynomial
$H \in \cO_1 [x_1, \cdots, x_n]$. For each $d$ one can consider
the Igusa local zeta function
$$
Z_d (s) = \int_{\cO_d^n} \vert H (x) \vert_d^s \vert dx \vert_d,
$$
with $\vert \_ \vert_d$ and $\vert dx \vert_d$ the corresponding norm and Haar measure  such that the measure of $\cO_d$ is $1$ and such that  $|a|_d$ for any $a\in K_d$ equals the measure of $a\cO_d$.  Meuser in \cite{Meus}
proved that  there exist polynomials $G$ and $H$ in $\ZZ [T, X_1, \cdots, X_t]$ and complex numbers
$\lambda_1, \cdots, \lambda_t$ such that, for all $d \geq 1$,
$$
Z_d (s) = \frac{G( q^{-ds} , q^{d \lambda_1}, \cdots, q^{d \lambda_t})}{H( q^{-ds} , q^{d \lambda_1}, \cdots, q^{d \lambda_t})}.
$$
Later Pas \cite{Past}, \cite{Pas2} extended Meuser's result to more general integrals.
In view of \cite{jag} and \cite{JAMS}, it is thus natural to expect that there exists a motivic rational function
$Z_{\mathrm{mot}} (T)$ with coefficients in a certain Grothendieck ring
such that, for every $d \geq 1$,
$Z_d (s)$ is obtained from
$Z_{\mathrm{mot}} (T)$ by a counting procedure and by putting
$T$ equal to $q^{-ds}$.
The theory presented here allows to prove such a result (more generally for $H$ replaced by a definable function), see Proposition \ref{Meuser}.

\par
Another motivation for the present work lies in joint work with J.~Nicaise \cite{cln}, where we prove some cases
of a conjecture by Chai on  the additivity of his base change conductor for semi-abelian varieties \cite{Chai} \cite{Chai-Yu},
by using the Fubini Theorems and change of variables results for Motivic Integration in arbitrary residual characteristic.

\par
Amongst the achievements of motivic integration is the definition of measure and integrals on more general Henselian valued fields than just locally compact ones, for example on Laurent series fields over a characteristic zero field \cite{k}, \cite{arcs}, on complete discrete valuation rings with perfect residue field \cite{LSeb}, \cite{Seb1}, \cite{Ni-trace}, and on algebraically closed valued fields \cite{hk}. Another important use of motivic integration, initiated  in \cite{JAMS}, and continued in \cite{CLR}  \cite{CL} \cite{exp} \cite{CGH}, is as a tool to interpolate $p$-adic integrals for all finite field extension of $\QQ_p$ and integrals over $\FF_q\llp t\rrp $, uniformly in big primes $p$ and its powers $q$, and thus to understand their properties when $p$ varies.
This has more recently led to powerful and general Transfer Principles \cite{exp} \cite{CGH}, which, for example, allow to transfer equalities from integrals defined over $\QQ_p$ to equalities of integrals defined over $\FF_p\llp t\rrp$, and vice versa. This Transfer Principle is used to change the characteristic in statements like the Fundamental Lemma in the Langlands program, see \cite{CHL} \cite{ZYGordon}. In this paper we will expand on the first two above-mentioned directions of motivic integration: to measure on more general fields with arbitrary residue field characteristic, and to interpolate many $p$-adic integrals by a single motivic integral.
For fixed $p$ with $p=0$ or $p$ a prime number, and fixed integer $e\geq 0$, we will define the motivic measure and integrals on all Henselian discretely valued fields of characteristic $(0,p)$, and ramification degree $e$ in the case that $p\not =0$, which will correspond with the Haar measure in the case of $p$-adic fields. Our approach will be uniform in all Henselian field extensions with the same ramification degree $e$, and hence, it will give an interpolation of $p$-adic integrals for all $p$-adic fields with ramification degree $e$.
Let us note that the present work finds its roots in work by Denef \cite{D84} and \cite{D85}, who combined model theory with integration to prove a rationality conjecture by Serre.

\par

A basic tool in our approach is to use higher order angular components maps $\ac_n$ for integers $n\geq 1$, already used by Pas in \cite{Past},  where $\ac_n$ is a certain multiplicative map from the valued field $K$ to the residue ring $\cO_K/\cM_K^n$ with $\cM_K$ the maximal ideal of the valuation ring $\cO_K$. We use several structure results about definable sets and definable functions in first order languages involving the $\ac_n$, one of which is called cell decomposition and goes back to \cite{Past} and \cite{CLR}. We implement our approach with the $\ac_n$ also in equicharacteristic zero discretely valued Henselian fields. This has the advantage of providing much more
definable sets than with $\ac = \ac_1$ only, for instance  all cylinders over definable sets are definable with the $\ac_n$, which is not the case if one uses only the usual angular component $\ac$. In mixed characteristic there is a basic interplay between the residue characteristic $p$, the ramification degree $e$, and the angular component maps $\ac_n$, for example when one applies Hensel's Lemma. Therefore, in mixed characteristic, we in fact need to consider higher order residue rings in the setup, instead of only considering the residue field as in \cite{CL} \cite{exp}.

\par

Similarly as in \cite{CL} we systematically study families of motivic integrals.
We have tried to give a more direct approach to definitions and properties of the motivic measure and functions than in \cite{CL}: instead of the existence-uniqueness theorem of section 10 of \cite{CL}, we explicitly define the motivic integrals and the integrability conditions and we do this step by step, as an iteration of more simple integrals. These explicit definitions give the same motivic measure and integrals as the ones that come from a direct image framework. Of course one has to be careful when translating conditions about integrability of a nonnegative function on a product space to conditions about the iterated integral.

\par
One new feature that does not appear in \cite{CL}, and which provides more flexibility in view of future applications, is the usage of the abstract notion of $\cT$-fields, where $\cT$ stands for a first order theory. The reader has the choice to work with some of the listed more concrete examples of $\cT$-fields (which are close to the concrete semi-algebraic setup of \cite{CL} or the subanalytic setup of \cite{CLR}) or with axiomatic, abstract $\cT$-fields. Thus, $\cT$-fields allow one to work with more general theories $\cT$ than the theories in the original work by Pas. (In \cite{CL}, the set-up is restricted to the original language of Denef-Pas with its natural theory.)

\par
Similarly as in \cite{CL}, we prove a general change of variables formula, a general Fubini Theorem,  the theory may be specialized  to previously known versions of motivic integration (e.g. as in \cite{LSeb}),  interpolates
 $p$-adic integrals, no completion of any Grothendieck ring is needed, and we implement how to integrate motivically with a motivic measure associated to a volume form on an algebraic variety.
Importantly, the theory is flexible enough  to work in various parametrized set-ups where the parameters can come from the valued field, the residue field, and the value group  (this last property has been very useful in \cite{CHL} and \cite{ZYGordon}).

\par
To make our work more directly comparable and linked with \cite{CL}, we write down in Section 11 how our more direct definitions of integrable constructible motivic functions lead naturally to a direct image formalism, analogous to the one in \cite{CL}.
Let us indicate how \cite{CL} and this paper complement each other, by an example. Having an equality between two motivic integrals as in \cite{CL} implies that the analogous equality will hold over all $p$-adic fields for $p$ big enough and all fields $\FF_q\llp t\rrp$ of big enough characteristic (the lower bound can be computed but is usually very bad). This leaves one with finitely  many `small' primes $p$, say, primes which are less than $N$. For the fields $\FF_q\llp t\rrp$ of small characteristic, very little is known in general and one must embark on a case by case study. On the other hand, in mixed characteristic, one could use the framework of this paper finitely many times to obtain the equality for all $p$-adic fields with residue characteristic less than $N$ and bounded ramification degree. Note that knowing an equality for a small prime $p$ and all possible ramification degrees is more or less equivalent to knowing it in $\FF_p\llp t\rrp$, which as we mentioned can be very hard.

\par
We end Section 11 with a comparison with work by J. Sebag and the second author on motivic integration in a smooth, rigid, mixed characteristic context,
and also in the context of smooth varieties with a volume form.
These comparisons play a role in \cite{cln}.
 The results of this paper have been announced in the mixed characteristic case in \cite{clip0} without proofs. Here we give all proofs more generally in both the mixed characteristic and the equal  characteristic  $0$ case.
\par

\medskip
{\small The research leading to these results has received funding from the European Research Council under the European Community's Seventh Framework Programme (FP7/2007-2013) / ERC Grant Agreement nr. 246903 NMNAG, and from the Fund for Scientific Research - Flanders (G.0415.10).}

\section{A concrete setting}\label{sec:conc}

\subsection{}A discretely valued field $L$ is a field with a surjective group homomorphism $\ord:L^\times\to\ZZ$, satisfying the usual axioms of a non-archimedean valuation.
A ball in $L$ is by definition a set of the form
$ \{x\in L\mid \gamma \leq \ord ( x - a)\} $,
where $a\in L$ and $\gamma\in \ZZ$. The collection of balls in $L$ forms a base for the so-called valuation topology on $L$.
The valued field $L$ is called Henselian if its valuation ring $\cO_L$ is a Henselian ring. Write $\cM_L$ for the maximal ideal of $\cO_L$.

\par

In the whole paper we will work with the notion of $\cT$-fields, which is more specific than the notion of discretely valued field, but which can come with additional structure if one wants. The reader who wants to avoid the formalism of $\cT$-fields may skip Section \ref{sec:bmin} and directly go to Section \ref{sec1} and use the following concrete notion of $(0,p,e)$-fields instead of $\cT$-fields.

\begin{definition}\label{0pe}
Fix an integer $e\geq 0$ and let $p$ be either $0$ or a prime number.
A $(0,p,e)$-field is a Henselian, discretely valued field $K$
of characteristic $0$, residue field characteristic $p$, together with a chosen uniformizer $\pi_K$ of the valuation ring $\cO_K$ of $K$, such that either $0=p=e$ or $p>0$ and the ramification degree equals $e$, meaning that $\ord \pi_K^e= \ord p = e$.
\end{definition}

We will always identify the value group of a $(0,p,e)$-field with the ordered group of integers $\ZZ$. The field $\QQ_p$ together with, for example, $p$ as a uniformizer is a $(0,p,1)$-field, as well as the algebraic closure of $\QQ$ inside $\QQ_p$, or any unramified, Henselian field extension of $\QQ_p$.
A $(0,p,e)$-field $K$ comes with natural so-called higher order angular component maps for $n\geq 1$,
$$
\ac_n:K^\times \to (\cO_K\bmod \cM_K^n) : x\mapsto \pi_K^{-\ord x} x \bmod \cM_K^n
$$
extended by $\ac_n(0)=0$. Sometimes one writes  $\ac$ for $\ac_1$. Each map $\ac_n$ is multiplicative on $K$ and coincides on $\cO_K^\times$ with the natural projection $\cO_K \to \cO_K/ \cM_K^n$.

\subsection{}
To describe sets in a field independent way, we will use first order languages, where the following algebraic one is inspired by languages of Denef and Pas. Its name comes from the usage of higher order angular component maps, namely modulo positive powers of the maximal ideal.
Consider the following basic language
$\cL_{\rm high}$ which has a sort for the valued field, a sort for the value group, and a sort for each residue ring of the valuation ring modulo $\pi^n$ for integers $n>0$. On the collection of these sorts,  $\cL_{\rm high}$ consists of the language of rings for the valued field together with a symbol $\pi$ for the uniformizer, the language of rings for each of the residue rings, the Presburger language $(+,-,0,1,\leq,\{\cdot \equiv\cdot \bmod n\}_{n>1})$ for the value group, a symbol $\ord$ for the valuation map,
symbols $\ac_n$ for integers $n>0$ for the angular component maps modulo the $n$-th power of the maximal ideal, and projection maps $p_{ n, m}$ between the residue rings for $n\geq m$.
On each $(0,p,e)$-field $K$, the language $\cL_{\rm high}$ has its natural meaning, where $\pi$ stands for $\pi_K$, $\ord$ for the valuation $K^\times\to \ZZ$, $\ac_n$ for the angular component map $K\to \cO_K/\cM_K^n$, and  $p_{n,m}$ for the natural projection map from $\cO_K/ \cM_K^n$ to $\cO_K / \cM_K^m$.

\par
 Let $\cT_{(0,p,e)}$ be the theory in the language $\cL_{\rm high}$ of sentences that are true in all $(0,p,e)$-fields. Thus, in particular, each $(0,p,e)$-field is a model of $\cT_{(0,p,e)}$.
 In this concrete setting, we let $\cT$ be $\cT_{(0,p,e)}$ in the language $\cL_{\rm high}$, and $\cT$-field means $(0,p,e)$-field.
See \cite{Past} for a concrete list of axioms that imply the whole theory $\cT_{(0,p,e)}$.

\section{Theories on $(0,p,e)$-fields}\label{sec:bmin}

In total we give three approaches to $\cT$-fields in this paper, so that the reader can choose which one fits him best. The first one is the concrete setting of Section \ref{sec:conc}; the second one consists of a list of more general and more adaptable settings in Section \ref{exs}, and the third approach is the axiomatic approach for theories and languages on $(0,p,e)$-fields in Section \ref{sec:axT}.
Recall that for the first approach one takes $\cT=\cT_{(0,p,e)}$ in the language $\cL_{\rm high}$, and $\cT$-field just means $(0,p,e)$-field.

\subsection{A list of theories}\label{exs}

In our second approach, we give a list of theories and corresponding languages which can be used throughout the whole paper.

\begin{enumerate}

\item\label{sas} Most closely related to the notion of $(0,p,e)$-fields is that of $(0,p,e)$-fields over a given ring $R_0$, for example a ring of integers, using the language $\cL_{\rm high}(R_0)$. Namely, for $R_0$ a subring of a $(0,p,e)$-field, let $\cL_{\rm high}(R_0)$ be the language $\cL_{\rm high}$ with coefficients (also called parameters) from $R_0$, and let $\cT_{(0,p,e)}(R_0)$ be the theory of $(0,p,e)$-fields over $R_0$ in the language $\cL_{\rm high}(R_0)$. In this case one takes $\cT=\cT_{(0,p,e)}(R_0)$ with language $\cL_{\rm high}(R_0)$. By a $(0,p,e)$-field $K$ over $R_0$ we mean in particular that the order and angular component maps on $K$ extend  the order and angular component maps on $R_0$.

\item\label{as} In order to include analytic functions, let $K$ be a $(0,p,e)$-field, and for each integer $n\geq 1$ let $K\{ x_1,\ldots,x_n\}$ be the ring of those formal power series $\sum_{i\in\NN^n}a_ix^i$ over $K$ such that $\ord(a_i)$ goes to $+\infty$ whenever $i_1+\ldots+i_n$ goes to $+\infty$. Let $\cL_{K}$ be the language $\cL_{\rm high}$ together with function symbols for all the elements of the rings $K\{ x_1,\ldots,x_n\}$, for all $n>0$. Each complete $(0,p,e)$-field $L$ over $K$ allows a natural interpretation of the language $\cL_{K}$, where $f$ in $K\{ x_1,\ldots,x_n\}$ is interpreted naturally as a function from $\cO_L^n$ to $L$. Let $\cT_{K}$ be the theory in the language $\cL_{K}$ of the collection of complete $(0,p,e)$-fields $L$ over $K$. In this case one takes $\cT=\cT_{K}$ with language $\cL_{K}$. For an explicit list of axioms that implies $\cT_{K}$, see \cite{CLR}.

\item More generally than in the previous example, any of the analytic structures of \cite{CLip} can be used for the language with corresponding theory $\cT$, provided that $\cT$ has at least one $(0,p,e)$-field as model.

\item\label{exts} For $\cT_0$ and $\cL_0$ as in any of the previous three items let $\cT$ and $\cL$ be any expansion of $\cT_0$ and $\cL_0$, which enriches $\cT_0$ and $\cL_0$ exclusively on the residue rings sorts. Suppose that $\cT$ has at least one model which is a $(0,p,e)$-field.

\end{enumerate}

\subsection{The axiomatic set-up}\label{sec:axT}

Our third approach to $\cT$-fields consists of a list of axioms which should be fulfilled by an otherwise unspecified theory $\cT$ in some language $\cL$. The pairs of theories and languages for $(0,p,e)$-fields in the prior two approaches are examples of this axiomatic set-up by Proposition \ref{cdI} (see Proposition \ref{cTR} for more examples).

\par
In this third approach, we start with a language $\cL$ which contains $\cL_{\rm high}$ and has the same sorts as $\cL_{\rm high}$, and a theory $\cT$ which contains $\cT_{(0,p,e)}$ and which is formulated in the language $\cL$.
The sort for the valued field is called the main sort, and each of the other sorts (namely the residue ring sorts and the value group sort) are called auxiliary. It is important that no extra sorts are created along the way.

\par
The list of axioms will be about all models of $\cT$, and not only about $(0,p,e)$-fields.
Note that
any model $\cK$ of the theory $\cT_{(0,p,e)}$ with valued field $K$ carries an interpretation of all the symbols of $\cL_{\rm high}$ with the usual first order properties, even when $K$ is not a $(0,p,e)$-field\footnote{This  happens, for example,  when the value group of $K$ is not isomorphic to $\mathbb{Z}$.}. We will use the notation $\pi_K$, $\ac_n$ and so on for the meaning of the symbols $\pi$ and $\ac_n$ of $\cL_{\rm high}$, as well as the notion of balls, and so on, for all models of $\cT_{(0,p,e)}$.
The axioms below will involve parameters, which together with typical model theoretic compactness arguments will yield all the family-versions of the results we will need for motivic integration. To see in detail how such axioms are exploited, we refer to \cite{CLb}. 
By definable, resp.~$A$-definable, we will mean $\cL$-definable without parameters, resp.~$\cL$-definable allowing parameters from $A$, unless otherwise stated.

\par
The following two Jacobian properties treat close-to-linear (local) behavior of definable functions in one variable.

\begin{definition}[Jacobian property for a function]\label{jacf}
Let $K$ be the valued field of a model of $\cT_{(0,p,e)}$.
Let $F:B\to B'$ be a function with $B,B'\subset K$.
 We say that $F$ has the Jacobian property if the following conditions hold all together:
\begin{itemize}
\item[(i)] $F$ is a bijection and $B,B'$ are balls in $K$,
\item[(ii)] $F$ is $C^1$ on $B$ with derivative  $F'$,
\item[(iii)]
$$
F' \mbox{ is nonvanishing and }
\ord (F')  \mbox{ is constant on
$B$},
$$
\item[(iv)] for all $x,y\in B$ with $x\not=y$, one has
$$ \ord(F')+\ord(x-y)
=\ord(F(x)-F(y)).$$
\end{itemize}
If moreover $n>0$ is an integer, we say that $F$ has the $n$-Jacobian property if also the following hold
\begin{itemize}
\item[(v)]  $ \ac_n(F')$ is constant on $B$,
\item[(vi)] for all $x,y\in B$ one has
$$ \ac_n(F')\cdot \ac_n(x-y)
=\ac_n(F(x)-F(y)).$$
\end{itemize}
\end{definition}


\begin{definition}[Jacobian property for $\cT$]
Say that the Jacobian property holds for the $\cL$-theory $\cT$ if for any model $\cK$ with Henselian valued field $K$ the following holds.

For any finite set $A$ in $\cK$ (serving as parameters in whichever sorts), any integer $n>0$, and any $A$-definable function $F:K\to K$ there exists an $A$-definable function
$$
f:K\to S
$$
with $S$ a Cartesian product of (the $\cK$-universes of) sorts not involving $K$ (these are also called auxiliary sorts),
such that each infinite fiber $f^{-1}(s)$ is a ball on which $F$ is either constant or has the $n$-Jacobian property (as in Definition \ref{jacf}).
\end{definition}

The following notion of $\cT$ being split is related to the model-theoretic notions of orthogonality and stable embeddedness.

\begin{definition}[Split]
Call $\cT$ split if the following conditions hold for any model $\cK$ with Henselian valued field $K$, value group $\Gamma$ and residue rings $\cO_K/\cM_K^n$

\begin{enumerate}
\item[(i)]Any $\cK$-definable subset of $\Gamma^r$ is $\Gamma$-definable in the language $(+,-,0,<)$.
\item[(ii)]For any finite set $A$ in $\cK$, any  $A$-definable subset $X\subset (\prod_{i=1}^s\cO_K/\cM_K^{m_i}) \times \Gamma^r$ is equal to  a finite disjoint union of $Y_i\times Z_i$ where the $Y_i$ are $A$-definable subsets of $\prod_{i=1}^s\cO_K/\cM_K^{m_i}$, and the $Z_i$ are $A$-definable subsets of $\Gamma^r$.
\end{enumerate}
\end{definition}

The general notion of $b$-minimality is introduced in \cite{CLb}. Here we work with a version which is more concretely adapted to the valued field setting.

\begin{definition}[Finite $b$-minimality]\label{fb}
Call $\cT$ finitely $b$-minimal if the following hold for any model $\cK$ with Henselian valued field $K$. Each locally constant $\cK$-definable function $g:K^\times\to K$ has finite image, and,
for any finite set $A$ in $\cK$ (serving as parameters in whichever sorts) and any $A$-definable set $X\subset K$, there exist an integer $n>0$, an $A$-definable function
$$
f:X\to S
$$
with $S$ a Cartesian product of (the $\cK$-universes of) sorts not involving $K$ (also called auxiliary sorts),
and an $A$-definable function
$$
c:S\to K
$$
such that each nonempty fiber $f^{-1}(s)$ for $s\in S$ is either
\begin{enumerate}
\item equal to the singleton $\{c(s)\}$, or,
\item\label{fb2} equal to the ball $\{x\in K\mid \ac_n(x-c(s))=\xi(s),\ \ord (x-c(s)) = z(s)\}$ for some $\xi(s)$ in $\cO_K/\cM_K^n$ and some $z(s)\in \Gamma$.
\end{enumerate}
\end{definition}
Note that in the above definition, the values $z(s)$ and $\xi(s)$ depend uniquely on $s$ in the case that $f^{-1}(s)$ is a ball and can trivially be extended when $f^{-1}(s)$ is not a ball so that  $s\mapsto z(s)$ and $s\mapsto \xi(s)$ can both be seen as $A$-definable functions on $S$.

\begin{lem}\label{finiteb}
For any model $\cK$ with valued field $K$ of a finitely $b$-minimal theory, any definable function from a Cartesian product of (the $\cK$-universes of) auxiliary sorts to $K$ has finite image, and so does any definable, locally constant function from any definable set $X\subset K^n$ to $K$.
\end{lem}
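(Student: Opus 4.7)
My plan is to prove the two assertions separately, using the cell decomposition provided by Definition \ref{fb} together with its first clause (locally constant $K^\times\to K$ has finite image).

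For the first assertion, let $g\colon S\to K$ be $A$-definable with $S$ a Cartesian product of auxiliary sorts. I would induct on the number of factors of $S$. In the base case $S=\Gamma$, the composition $x\mapsto g(\ord x)$ is a locally constant definable function $K^\times\to K$ whose image equals $g(\Gamma)$ by surjectivity of $\ord\colon K^\times\to\Gamma$, hence is finite; in the base case $S = \cO_K/\cM_K^m$, the map $x\mapsto g(\ac_m x)$ captures $g((\cO_K/\cM_K^m)^\times)$, the maps $x\mapsto g(\pi^k\cdot \ac_m x)$ for $k=1,\dots,m-1$ capture $g(\pi^k(\cO_K/\cM_K^m)^\times)$, and the value $g(0)$ adds at most one more element, so $g(\cO_K/\cM_K^m)$ is a finite union of finite sets. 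For the inductive step, write $S = S_1\times S_2$ with $S_2$ a single auxiliary sort. For each $s_1\in S_1$ the slice $g(s_1,\cdot)\colon S_2\to K$ has finite image by the base case; a first-order compactness argument then produces a uniform bound $N$ on the cardinalities of the slices, and a definable selection yields definable functions $v_1,\dots,v_N\colon S_1\to K$ with $g(s_1,S_2) = \{v_1(s_1),\dots,v_N(s_1)\}$ for every $s_1$. The inductive hypothesis applied to each $v_i$ gives $g(S) = \bigcup_i v_i(S_1)$ finite.

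For the second assertion I would induct on $n$. In the base case $n=1$, apply Definition \ref{fb} to $X\subset K$ to obtain an $A$-definable map $f\colon X\to S'$ with $S'$ auxiliary and an $A$-definable center $c\colon S'\to K$ whose fibers are either singletons $\{c(s')\}$ or balls. The singleton-fiber contribution to $g(X)$ is contained in the image of $g\circ c$ restricted to the singleton-fiber parameters, which is a definable function from an auxiliary sort to $K$, hence is finite by the first part. For a ball fiber $B\subset X$ on which $g$ is locally constant, translate so that $0\notin B$ (possible since balls admit centers) and extend $g|_B$ by zero to a function $\tilde g\colon K^\times\to K$; since $B$ is clopen in the valuation topology, $\tilde g$ is locally constant and definable, so by the first clause of Definition \ref{fb} its image $g(B)\cup\{0\}$ is finite. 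The contributions across different ball fibers are then amalgamated by the same uniformity-and-selection argument as in the first part, reducing once more to the first assertion. The induction step for $n>1$ is analogous, passing to a single $K$-coordinate at a time and combining the finite fibre-images via the same uniformity plus another application of the first part.

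The main obstacle is the definable-selection step that appears in both inductions: extracting, from a definable family of uniformly finite subsets of $K$, definable enumerating functions on the indexing space. The idea is to combine a first-order compactness argument, which yields the uniform cardinality bound $N$, with a canonical selection using $\ord$ and $\ac_m$ for $m$ large (since for any fixed $N$ the map $\ac_m$ separates any $N$-element subset of $K$ for $m$ sufficiently large), or equivalently to exploit whatever definable-Skolem content is available in $\cT$. Once this combinatorial step is in place, everything else is a routine combination of Definition \ref{fb} with the natural locally constant projections $\ord$ and $\ac_m$ from $K^\times$ to the auxiliary sorts.
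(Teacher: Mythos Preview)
Your overall architecture---induction on the number of auxiliary factors for the first statement, induction on $n$ for the second, with base cases built from $\ord$ and $\ac_m$---matches the paper's. The genuine gap is the definable-selection step you flag as ``the main obstacle''. Your proposed fix does not work: for a family of $N$-element subsets of $K$ indexed by an auxiliary parameter $s_1$, the smallest $m$ for which $(\ord,\ac_m)$ separates the $N$ points depends on the particular subset (e.g.\ the pairs $\{1,1+\pi^k\}$ need $m>k$), so no single $m$ works uniformly in $s_1$, and compactness cannot manufacture one. Nor do the axioms of Definition~\ref{def:T} supply definable Skolem functions from finite subsets of $K$.

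The paper sidesteps selection entirely by using the elementary symmetric polynomials. If the fibre over $s_1$ is a $t$-element set $\{a_1,\dots,a_t\}\subset K$, then the values $e_\ell(a_1,\dots,a_t)$ for $\ell=0,\dots,t-1$ are well-defined (independent of ordering) definable functions of $s_1$. Since a $t$-element subset is determined by its elementary symmetric functions, an infinite union of fibres forces at least one $e_\ell$ to have infinite image, and induction finishes. This trick replaces your $v_1,\dots,v_N$ and requires no choice. For the second statement the paper also avoids your amalgamation-by-selection: it first applies the already-proved first statement to the center map $c$ to conclude $c$ has finite image, partitions so that $c$ is constant, translates once globally so that $c=0$, and then the single extension-by-zero to $K^\times$ handles all ball fibres at once (the union of these fibres is the preimage of a set under the locally constant map $x\mapsto(\ac_n x,\ord x)$, hence clopen in $K^\times$). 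The $n>1$ step again uses symmetric polynomials rather than selection.
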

\begin{proof}
Take a model $\cK$ with valued field $K$ of the finitely $b$-minimal theory.
Suppose that $h$ is a $\cK$-definable function from an auxiliary set $S$ (that is, $S$ is a definable subset of a Cartesian product of the $\cK$-universes of auxiliary sorts) to the valued field $K$, and that the image of $h$ is infinite. We have to show a contradiction.

\par
Suppose that $S$ is a definable subset of the product $S_1\times\ldots\times S_n$ of universes of auxiliary sorts. If $n=1$ and if $S_n$ is the value group, then let $g_1:K^\times \to K$ be the function sending $x$ to $h(\ord(x))$ if this is well-defined and to $0$ otherwise. In this case $g_1$ is locally constant and has infinite image, a contradiction to finite $b$-minimality.
In the case that $n=1$ and $S_n$ is a residue ring $\cO_K/\pi^\ell_K$ for some $\ell\geq 1$, there exists $\xi\in \cO_K/\pi^\ell_K$ and $\ell_0\geq 0$ such that $S\cap \xi+\pi_K^{\ell_0}(\cO_K/\cM^\ell_K)^\times$ is mapped to an infinite set under $h$. In this case the function $g_2:K^\times \to K$ which maps  $x$ to $h(\xi+\pi_K^{\ell_0}\cdot \ac_{\ell}(x))$ if this is well-defined and to $0$ otherwise is locally constant and has infinite image, again a contradiction to finite $b$-minimality.

\par
For $n>1$, we may suppose by induction on $n$ that the coordinate projection $p:S\mapsto \prod_{i<n}S_i$ has fibers which are mapped to finite sets under $h$, that is, $h(p^{-1}(s))$ is finite for any $s\in \prod_{i<n}S_i$. By model theoretic compactness, we may as well suppose that the nonempty fibers of $p$ are mapped under $h$ to sets with exactly $t$ elements. Let $g_0,\ldots,g_{t-1}$ be the elementary symmetric polynomials in $t$ variables. Write $h_\ell: \prod_{i<n}S_i \to K, s\mapsto h_\ell(s)$ for the evaluation of $g_\ell$ on the $t$-element set $h(p^{-1}(s))$ if nonempty, and to zero if this set is empty. At least one of the functions $h_\ell$ for $\ell=0,\ldots,t - 1$ must have infinite image, and we are done by induction on $n$.

\par
For the next statement, suppose that $h:X\subset K^n\to K$ is definable, locally constant, and has infinite image. For $n=1$, by finite $b$-minimality, there exist $\cK$-definable
$$
f:X\to S
$$
with $S$ a Cartesian product of (the $\cK$-universes of) auxiliary sorts,
and a $\cK$-definable function
$$
c:S\to K
$$
such that each nonempty fiber $f^{-1}(s)$ for $s\in S$ is either
\begin{enumerate}
\item equal to the singleton $\{c(s)\}$, or,
\item equal to the ball $\{x\in K\mid \ac_n(x-c(s))=\xi(s),\ \ord (x-c(s)) = z\}$ for some $\xi(s)$ in $\cO_K/\cM_K^n$  and some $z\in \ZZ$,
\end{enumerate}
By what we have just proven, the image of $c$ is finite. Up to a finite partition of $X$, we may suppose that $c$ is a constant function, and that each nonempty fiber $f^{-1}(s)$ for $s\in S$ is equal to the ball $\{x\in K\mid \ac_n(x-c(s))=\xi(s),\ \ord (x-c(s)) = z\}$. After a translation we may suppose that $c=0$ on $S$. Now the extension of $h$ to a function $K^\times \to K$ by zero is locally constant. We are done for $n=1$ by finite $b$-minimality.
For general $n$, let $p:K^n\to K^{n-1}$ be a coordinate projection. By the case $n=1$ one has that $h(y,\cdot)$ sending $t$ such that $(y,t)$ lies in $X$ to $h(y,t)$ has finite image. By model theoretic compactness and up to a finite partition of $X$, we may suppose, for each $y\in p(X)$, that $h(y,\cdot)$ has $t$ elements. Now one finishes by induction using the elementary symmetric polynomials as before in the proof.
\end{proof}

\begin{cor}\label{b-min}
A finitely $b$-minimal theory is in particular $b$-minimal (as defined in \cite{CLb}).
\end{cor}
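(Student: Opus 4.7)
The plan is to verify directly the three axioms (b1), (b2), (b3) from the definition of $b$-minimality in \cite{CLb}.

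First, axiom (b1), which asks that every $A$-definable subset $X\subset K$ admit an $A$-definable function $f:X\to S$ with $S$ a Cartesian product of auxiliary sorts whose nonempty fibers are either singletons or balls of $K$, is built directly into Definition \ref{fb}. Indeed, a set of the form $\{x\in K\mid \ac_n(x-c(s))=\xi(s),\ \ord(x-c(s))=z(s)\}$ is a ball of $K$: after lifting $\xi(s)$ to some $\tilde\xi(s)\in\cO_K$, it coincides with the ball $\{x\in K\mid \ord(x-c(s)-\pi_K^{z(s)}\tilde\xi(s))\geq z(s)+n\}$, so the cell decomposition of finite $b$-minimality is already a cell decomposition in the sense of \cite{CLb}.

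Next, axiom (b2), which demands that every definable function from a Cartesian product of auxiliary sorts to $K$ has finite image, is the content of the first assertion of Lemma \ref{finiteb}. Since Lemma \ref{finiteb} is stated relative to an arbitrary model of $\cT$, the statement is uniform in parameters, which is what is required.

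Finally, axiom (b3) asks, for each $A$-definable function $F:X\subset K\to K$, for an $A$-definable map $f:X\to S$ (with $S$ a product of auxiliary sorts) such that on each nonempty fiber, $F$ is either constant or injective. To obtain this, I would first isolate the definable subset $X_{\mathrm{lc}}\subset X$ on which $F$ is locally constant in the valuation topology; by the second assertion of Lemma \ref{finiteb}, $F|_{X_{\mathrm{lc}}}$ has finite image, so $X_{\mathrm{lc}}$ splits into finitely many level sets on which $F$ is constant (and a function to the finite set of values provides the auxiliary parameter). On the complement $X\setminus X_{\mathrm{lc}}$, apply the cell decomposition of Definition \ref{fb} to reduce to the case where each fiber is a single ball on which $F$ is nowhere locally constant; a further application of finite $b$-minimality to the definable set of pairs $\{(x,y)\in X^2\mid x\neq y,\ F(x)=F(y)\}$ together with Lemma \ref{finiteb} lets one refine into pieces where $F$ is injective.

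The main obstacle is axiom (b3), since finite $b$-minimality does not by itself deliver the Jacobian property (Definition \ref{jacf}); one has to extract injectivity from nowhere-local-constancy by iterating the cell decomposition rather than calling on a derivative-based argument. Axioms (b1) and (b2) are essentially built into Definition \ref{fb} and Lemma \ref{finiteb}, respectively, and the verification reduces to matching notation.
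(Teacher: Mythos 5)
Your verifications of (b1) and (b2) match the paper's: (b1) is built into Definition~\ref{fb}, as your explicit computation shows, since a nonempty fiber of the second type forces $\xi(s)$ to be a unit in $\cO_K/\cM_K^n$ (the image of $\ac_n$ on nonzero elements consists of units), so after lifting to $\tilde\xi(s)\in\cO_K^\times$ the fiber is exactly the ball $\{x:\ord(x-c(s)-\pi_K^{z(s)}\tilde\xi(s))\geq z(s)+n\}$; and (b2) is the first assertion of Lemma~\ref{finiteb}, as you say.

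For (b3) you diverge from the paper, and there is a genuine gap. The paper's proof is a one-line citation: (b3) follows from the second assertion of Lemma~\ref{finiteb} together with Criterium~2.6~($*$) of \cite{CLb}, which is precisely the ready-made result that, in the presence of (b1) and (b2), finiteness of images of definable locally constant functions $X\subset K^n\to K$ implies (b3). You try to reprove that implication inline, and both halves of your decomposition are underjustified. On $X_{\mathrm{lc}}$, finiteness of the image of $F$ does not by itself produce an $A$-definable map to a product of auxiliary sorts whose fibers refine the level sets: the finitely many values $v_1,\dots,v_k$ of $F$ lie in $K$ and need not be individually $A$-definable, so ``a function to the finite set of values'' is not an $A$-definable auxiliary-sort map in the sense that (b3) demands. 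On $X\setminus X_{\mathrm{lc}}$, applying finite $b$-minimality to $\{(x,y): x\neq y,\ F(x)=F(y)\}$ does show that $F$ is finite-to-one on a cell where it is nowhere locally constant (no $x$-fiber of that set can contain a ball), but passing from finite-to-one to ``injective on the fibers of an $A$-definable auxiliary map'' is exactly the nontrivial content of Criterium~2.6~($*$): there is no ordering on the valued field with which to definably single out one preimage per $F$-fiber, and your sketch gives no mechanism to replace it. The intended proof is simply to invoke the criterion from \cite{CLb} rather than to reconstitute it.
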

\begin{proof}
Immediate from Lemma \ref{finiteb}: axiom (b1) of Definition 2.1 of \cite{CLb} is contained in our definition of finite $b$-minimality; axiom (b2) of loc.~cit.~follows from the first statement of Lemma \ref{finiteb}; finally, axiom (b3) of loc.~cit.~follows from the second statement of Lemma \ref{finiteb} and Criterium 2.6 ($*$) of \cite{CLb}.
\end{proof}

Finally we come to the most general notion of $\cT$-fields, namely the axiomatic one of our third approach.

\begin{definition}\label{def:T}
Let $\cT$ be a theory containing $\cT_{(0,p,e)}$ in a language $\cL$ with the same sorts as $\cL_{\rm high}$, which is split, finitely $b$-minimal, has the Jacobian property, and has at least one $(0,p,e)$-field as model.
Then by a $\cT$-field we mean a $(0,p,e)$-field which is a model of $\cT$.
\end{definition}

We have  the following variant of the cell decomposition statement and related structure results on definable sets and functions of \cite{CLip} for our more concrete theories.
\begin{theorem}[\cite{CLip}]\label{cdI}
The theory $\cT_{(0,p,e)}$ as well as the listed theories in \ref{exs} satisfy the conditions of Definition \ref{def:T}.
\end{theorem}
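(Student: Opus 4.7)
The plan is to verify each of the four requirements of Definition \ref{def:T} (existence of a $(0,p,e)$-model, splitness, finite $b$-minimality, and the Jacobian property) by appealing to the cell decomposition and relative quantifier elimination results of \cite{Past}, \cite{CLR}, and \cite{CLip}. Since the statement is explicitly packaged as a variant of the results of \cite{CLip}, the proof will largely be a matter of translating the conclusions there into the axioms we have isolated, together with a few direct checks.

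First I would dispose of the model-existence clause: for $\cT_{(0,p,e)}$ any totally ramified degree-$e$ extension of $\QQ_p$ (with an Eisenstein uniformizer) works, for $\cT_{(0,p,e)}(R_0)$ one takes any such field containing $R_0$, for $\cT_K$ the chosen $K$ (together with any of its complete extensions) serves, and for the expansions in item (\ref{exts}) we have posited a $(0,p,e)$-model outright. Next I would handle splitness. Pas-style relative quantifier elimination (resp.\ its analytic analogue in \cite{CLip}) eliminates valued-field quantifiers in favor of formulas on the residue rings and the value group; condition (i) then follows from Presburger elimination applied to the resulting value-group formula, and condition (ii) follows because after elimination any formula on $(\prod_i \cO_K/\cM_K^{m_i})\times\Gamma^r$ mentions residue-ring and value-group variables without mixing them through $K$, hence defines a finite disjoint union of rectangles $Y_i\times Z_i$.

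For finite $b$-minimality I would invoke the cell decomposition of \cite{CLip}: every $A$-definable subset of $K$ is a finite disjoint union of cells, each cell being either a singleton $\{c(s)\}$ or a ball of exactly the shape $\{x\in K\mid \ac_n(x-c(s))=\xi(s),\ \ord(x-c(s))=z(s)\}$, parametrized by $s$ ranging over a product of auxiliary sorts. Packaging the cell-index as $f:X\to S$ and the center as $c:S\to K$ gives the required data. The finiteness of the image of a locally constant definable $g:K^\times\to K$ is then extracted by applying the same decomposition to the graph of $g$: on each ball cell $g$ is constant by local constancy, and finiteness of the cell-index combined with the first half of the statement (which is the only part already used in Lemma \ref{finiteb}) forces the image to be finite. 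For the Jacobian property one applies cell decomposition to the graph of an arbitrary definable $F:K\to K$, using that definable functions of one valued-field variable are piecewise $C^1$ with definable derivative (established in \cite{CLip}); the finer form of the cell decomposition in the presence of the $\ac_n$ then ensures that on each ball cell either $F$ is constant or $F$ has the $n$-Jacobian property in the sense of Definition \ref{jacf}.

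The step I expect to be the main obstacle is verifying the $n$-Jacobian property in the analytic expansions of case (\ref{as})--(3), where power series are only approximately polynomial and one needs the precise linearization $\ac_n(F(x)-F(y)) = \ac_n(F')\ac_n(x-y)$ on each cell; this is exactly the content of the analytic cell decomposition in \cite{CLip}, obtained there via Weierstrass preparation. The expansion case (\ref{exts}) requires one last remark: since the enrichment is confined to the residue-ring sorts, it cannot affect any statement about definable subsets of, or functions on, the valued field that is witnessed by auxiliary parameters, so both finite $b$-minimality and the Jacobian property transfer from the base theory; splitness is preserved because clauses (i) and (ii) only constrain the interaction of the value group with the residue rings and with $K$, which the enrichment leaves untouched.
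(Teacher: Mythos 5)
The paper offers no proof of this theorem: it is attributed verbatim to \cite{CLip} (note the bracketed citation on the theorem environment), so there is no in-text argument to compare yours against. Your reconstruction --- extracting the four axioms from relative quantifier elimination, cell decomposition, and Weierstrass preparation in the style of \cite{Past}, \cite{CLR}, \cite{CLip} --- is the expected route and is sound in outline for model existence, splitness, the singleton/ball cell shape of finite $b$-minimality, the Jacobian property, and the transfer to expansions on residue-ring sorts.

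There is, however, a genuine gap in your verification of the axiom that every locally constant definable $g:K^\times\to K$ has finite image. You appeal to ``finiteness of the cell-index combined with the first half of the statement (which is the only part already used in Lemma \ref{finiteb})''. First, the cell-index $s$ ranges over a product of auxiliary sorts, which is typically infinite (it involves $\Gamma\cong\ZZ$), so ``finiteness of the cell-index'' has no content. Second, and more seriously, invoking any part of Lemma \ref{finiteb} here is circular: its proof explicitly manufactures locally constant functions $g_1,g_2:K^\times\to K$ and applies precisely the axiom you are trying to verify. The non-circular argument is to use that \cite{CLip} establishes $b$-minimality in the sense of \cite{CLb}, including axiom (b2), which asserts outright that any definable function from a product of auxiliary sorts to $K$ has finite image. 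Cell-decompose $K^\times$ adapted to $g$: local constancy forces $g$ to be constant on each ball fiber $f^{-1}(s)$ (the prepared cell form must degenerate since the derivative vanishes), so the value of $g$ at $x$ depends only on $s=f(x)$, hence factors through a definable function $S\to K$; there are finitely many cells; apply (b2). Two small further remarks: your model-existence check omits the case $p=e=0$, where one takes for instance $\CC\llp t\rrp$; and for $\cT_K$ in case (\ref{as}) the witness must be a complete $(0,p,e)$-field, so one should name the completion of $K$, not $K$ itself.
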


Finally we indicate how one can create new theories with properties as in Definition \ref{def:T}.
\begin{prop}\label{cTR}
Let $\cT$ be a theory that satisfies the conditions of Definition \ref{def:T}. Then so does the theory $\cT(R)$ in the language $\cL(R)$ for any ring $R$ which is a subring of a $\cT$-field, where $\cT(R)$ is the theory of all $\cT$-fields which are algebras over $R$ (and which extend $\ord$ and the $\ac_n$ on $R$).
\end{prop}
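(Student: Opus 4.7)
The plan is to verify each clause of Definition \ref{def:T} for the pair $(\cT(R),\cL(R))$ by reducing it to the corresponding clause already known for $(\cT,\cL)$. The reduction rests on a single compactness-style observation: since $R$ lives in the valued field sort and $\cL(R)$ differs from $\cL$ only by the adjunction of a constant symbol for each element of $R$, every $\cL(R)$-formula mentions only finitely many elements of $R$. Consequently, if $\cK$ is any model of $\cT(R)$ and $A$ is a finite parameter set in $\cK$, then $\cL(R)$-definability over $A$ coincides with $\cL$-definability over the finite parameter set $A' := A \cup R_{0}$, for a suitable finite $R_{0} \subset R$ consisting of the $R$-constants actually appearing in the relevant formulas. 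Moreover any model of $\cT(R)$ is, by construction, a $\cT$-field, so every $\cL$-theorem about $\cT$-fields is available; and a $(0,p,e)$-field model of $\cT(R)$ exists because $R$ is assumed to sit inside some $\cT$-field $K_{0}$, which then carries a tautological $\cT(R)$-structure.

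Next I would dispose of the easier clauses. The inclusion $\cT(R) \supset \cT_{(0,p,e)}$ is inherited from $\cT \supset \cT_{(0,p,e)}$. For splitness, given an $\cL(R)$-definable $X \subset \bigl(\prod_{i=1}^{s}\cO_{K}/\cM_{K}^{m_{i}}\bigr) \times \Gamma^{r}$ over a finite $A$, I would absorb the $R$-constants occurring in its definition into $A' = A \cup R_{0}$, invoke splitness of $\cT$ to write $X$ as a finite disjoint union $\bigsqcup Y_{i} \times Z_{i}$ with $\cL$-definable factors over $A'$, and then observe that these factors are a fortiori $\cL(R)$-definable over $A$. The purely value-group clause of splitness is handled identically, using that a $\cK$-definable subset of $\Gamma^{r}$ in $\cT(R)$ is $\cK$-definable in $\cT$ and therefore already definable in the pure Presburger language $(+,-,0,<)$.

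The main step is the transfer of finite $b$-minimality and of the Jacobian property. Both definitions are universally quantified over finite parameter sets $A$ in a model $\cK$, so given an $\cL(R)$-definable $X \subset K$, function $F : K \to K$, or locally constant $g : K^{\times} \to K$ over $A$, I would pass to $A' = A \cup R_{0}$, apply the corresponding axiom for $\cT$ to obtain the integer $n$, the auxiliary-sort target $S$, and the $\cL$-definable witnessing maps $f : X \to S$, $c : S \to K$ (respectively the partition witnessing the $n$-Jacobian property, respectively finiteness of the image of $g$), and then record that this data is $\cL(R)$-definable over $A$ since every element of $R_{0}$ is already a constant symbol of $\cL(R)$. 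The auxiliary-sort target $S$ remains unaffected by this expansion because $\cL(R)$ enriches only the main sort; in particular $S$ is still a product of auxiliary sorts in $\cT(R)$.

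The only conceptual obstacle is really the compactness-style reduction in the first paragraph; once it is pinned down, the rest is bookkeeping. The point to double-check is that the outputs returned by the $\cT$-axioms on the enlarged parameter set $A'$ genuinely qualify as $\cL(R)$-definable objects over $A$, which is automatic from $R_{0} \subset R$ and the definition of $\cL(R)$.
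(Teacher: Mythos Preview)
Your proposal is correct and follows essentially the same argument as the paper: absorb the finitely many $R$-constants appearing in a given $\cL(R)$-formula into the finite parameter set $A$, apply the corresponding axiom for $\cT$ over the enlarged set $A'$, and observe that the witnesses returned are $\cL(R)$-definable over $A$. The paper makes this explicit only for splitness and declares that the same argument handles the remaining clauses, whereas you spell out each clause; one minor terminological slip is that an arbitrary model of $\cT(R)$ need not be a $\cT$-field in the sense of Definition~\ref{def:T} (it need not be a $(0,p,e)$-field), but it is a model of $\cT$, which is all you actually use.
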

\begin{proof}
The same argument is used to show all the desired properties.
We will make this argument explicit by showing that $\cT(R)$ is split.
Let  $A$ be a finite set in $\cK$, and  $X\subset (\prod_{i=1}^s\cO_K/\cM_K^{m_i}) \times \Gamma^r$ a $A$-definable subset in the language $\cL(R)$. In particular, only finitely many constants from $R$ play a role in the formula describing $X$, hence there exists a finite set $A'$ consisting of $A$ and a finite subset of $R$ such that $X$ is $A'$-definable in $\cL$. Now, since $\cT$ is split, $X$  equals a finite disjoint union of $Y_i\times Z_i$ where the $Y_i$ are $A'$-definable subsets of $\prod_{i=1}^s\cO_K/\cM_K^{m_i}$, and the $Z_i$ are $A'$-definable subsets of $\Gamma^r$, all in the language $\cL$. Clearly the $Y_i$ and $Z_i$ are $A$-definable in $\cL(R)$ as desired.
\end{proof}

From now on we fix $p\geq 0$ and $e\geq 0$ and one of the notions of $\cT$, $\cL$, and $\cT$-fields as in Definition \ref{def:T} for the rest of the paper, which includes the possibility of $\cT$ and $\cL$ being as in Sections \ref{sec:conc}, \ref{exs}, or as in Proposition \ref{cTR}. We will often write $K$ for a $\cT$-field instead of writing the pair $K,\pi_K$ where $\pi_K$ is a uniformizer of $\cO_K$.

\section{Definable subassignments and definable morphisms}\label{sec1}\label{fstar}

\subsection{}\label{defsub}We recall that definable means $\cL$-definable without parameters\footnote{Note that parameters from, for example, a base ring can be used, see Section \ref{exs} and Proposition \ref{cTR}.}.
For any integers $n,r,s\geq 0$ and for any tuple $m=(m_1,\ldots,m_s)$ of nonnegative integers, denote by $h[n,m,r]$ the functor sending a  $\cT$-field $K$ to
$$
h[n,m,r](K) := K^n \times (\cO_K/\cM_K^{m_1}) \times \cdots \times (\cO_K/\cM_K^{m_s}) \times \ZZ^r.
$$

The data of a subset $X_K$ of $h[n,m,r](K)$ for each  $\cT$-field $K$ is called a definable subassignment (in model theory sometimes loosely called a definable set), if there exists an $\cL$-formula $\varphi$ in tuples of free variables of the corresponding lengths and in the corresponding sorts such that $X_K$ equals $\varphi(K)$, the set of the points in $h[n,m,r](K)$ satisfying $\varphi$. If one wants to specify the theory, one writes definable $\cT$-subassignment instead of definable subassignment.

An example of a definable subassignment of $h[1,0,0]$ is the data of the subset $P_2(K)\subset K$ consisting of the nonzero squares in $K$ for each  $\cT$-field $K$, which can be described by the formula $\exists y (y^2=x\wedge x\not=0)$ in one free variable $x$ and one bound variable $y$, both running over the valued field\footnote{Note that, as is standard, to determine $\varphi(K)$, each variable occurring in $\varphi$ (thus also the variables which are bound by a quantifier and hence not free), runs over exactly one set out of $K$, $\ZZ$, or a residue ring $\cO_K/\cM_K^\ell$.}.

A definable morphism $f:X\to Y$ between definable subassignments $X$ and $Y$ is given by a definable subassignment $G$ such that $G(K)$ is the graph of a function $X(K)\to Y(K)$ for any $\cT$-field $K$. We usually write $f$ for the definable morphism, ${\rm{Graph}}(f)$ for $G$, and $f_K$ for the function $X(K)\to Y(K)$ with graph $G(K)$. A definable isomorphism is by definition a definable morphism which has an inverse.

Denote by $\Def$ (or $\Def(\cT)$ in full) the category of definable subassignments with the definable morphisms as morphisms. More generally, for $Z$ a definable subassignment, denote by $\Def_Z$ the category of definable subassignments $X$ with a specified definable morphism $X\to Z$ to $Z$, with as morphisms between $X$ and $Y$ the definable morphisms which make commutative diagrams with the specified $X\to Z$ and $Y\to Z$. We will often use the notation $X_{/Z}$ for $X$ in $\Def_Z$. In the prior publications \cite{CL} and \cite{exp}, we used the notation $X\to Z$ instead of the shorter
$X_{/Z}$.

For every morphism $f  : Z  \rightarrow Z'$ in $\Def$,
composition with $f$ defines a functor $f_! : \Def_Z \rightarrow
\Def_{Z'}$, sending $X_{/Z}$ to $X_{/Z'}$. Also, fiber product defines a functor $f^{*}:
\Def_{Z'}  \rightarrow \Def_{Z}$,
namely, by sending $Y_{/  Z'}$ to $(Y\otimes_{Z'} Z)_{/ Z}$, where for each $\cT$-field $K$ the set $(Y\otimes_{Z'} Z )(K)$ is the set-theoretical fiber product of $Y(K)$ with $Z(K)$ over $Z'(K)$ with the projection as specified function to $Z(K)$.

Let $Y$ and $Y'$ be in $\Def$. We write $Y
\times Y'$ for the subassignment corresponding to the Cartesian product and we write $Y [n, m, r]$ for
$Y \times h [n,m, r]$. (We fix in the whole paper $h = h [0, 0, 0]$ to be the definable subassignment of the singleton $\{0\}$, that is, $h(K)=\{0\}=K^0$ for all $K$, so that $h[n,m,r]$, as previously defined, is compatible with the notation of $Y[n,m,r]$  for general $Y$.)

\par
By a point on a definable subassignment $X$ we mean a tuple $x=(x_0,K)$ where $K$ is a  $\cT$-field and $x_0$ lies in $X(K)$. We denote $|X|$ for the collection of all points that lie on $X$.

Let us give a result which is true for all these examples, by quantifier elimination results (which are absent in the axiomatic setting of Section \ref{sec:axT}).

\begin{lem}\label{compl}
Let $\cT$ and $\cL$ be as in any of the examples 1 -- 4 of Section \ref{exs}. Let $X$ and $Y$ be definable subassignments of $h[m,n,r]$ for some $m,n,r$. If for each $\cT$-field $L$ which is complete for the valuation topology one has $X(L)=Y(L)$, then $X=Y$ as definable subassignments.
\end{lem}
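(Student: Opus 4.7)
The strategy is to show that every $\cT$-field $K$ admits an $\cL$-embedding into a complete $\cT$-field, namely its $\pi_K$-adic completion $\hat{K}$, and then to use quantifier elimination to conclude that membership in any definable subassignment is preserved under this embedding. Concretely, set $Z := (X\setminus Y)\cup(Y\setminus X)$, which is again a definable subassignment of $h[m,n,r]$. It suffices to show that if $Z(L)=\emptyset$ for every complete $\cT$-field $L$, then $Z(K)=\emptyset$ for every $\cT$-field $K$. So I suppose toward a contradiction that $x\in Z(K)$ for some $\cT$-field $K$, and aim to produce a point of $Z(\hat{K})$.

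First I would verify that $\hat{K}$ is a $\cT$-field in each of the four settings. The completion is Henselian, discretely valued, of characteristic zero, with the same residue characteristic and ramification as $K$, so it is a $(0,p,e)$-field (example 1). For examples 2 and 3, $\hat{K}$ is by construction complete over the base field, hence carries the natural interpretation of the analytic function symbols and thus satisfies the theory of such complete fields. For example 4 the extra structure lives entirely on the residue-ring sorts, which agree for $K$ and $\hat{K}$ via the canonical isomorphism $\cO_K/\cM_K^n\cong\cO_{\hat{K}}/\cM_{\hat{K}}^n$. The canonical inclusion $K\hookrightarrow\hat{K}$ is then an $\cL$-embedding: the ring operations are obviously preserved, $\pi_K=\pi_{\hat{K}}$, and the symbols $\ord$, $\ac_n$, and $p_{n,m}$ take the same values on an element of $K$ whether computed inside $K$ or inside $\hat{K}$.

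The decisive step is then to pass $x$ to $Z(\hat{K})$ along this embedding. For this I invoke the quantifier elimination results gathered in Theorem~\ref{cdI}, going back to Pas in the algebraic case and to Cluckers--Lipshitz (and Cluckers--Lipshitz--Robinson) in the analytic and expanded cases: modulo $\cT$, every $\cL$-formula is equivalent to one whose quantifiers range only over the auxiliary sorts (residue rings and value group). Since those sorts and their full $\cL$-structure coincide on $K$ and on $\hat{K}$, and since the terms appearing in the formula are evaluated identically under the embedding, the truth value of such a formula at $x$ is the same in $K$ and in $\hat{K}$. Applying this to a formula defining $Z$ yields $x\in Z(\hat{K})$, contradicting the hypothesis. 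The main obstacle is precisely this reliance on elimination of valued-field quantifiers — without it, one cannot compare truth in $K$ and in $\hat{K}$, since an existential over the valued field could be witnessed by an element of $\hat{K}\setminus K$; everything else reduces to standard bookkeeping about completion of discretely valued Henselian fields.
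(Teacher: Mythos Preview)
Your argument is correct and follows essentially the same route as the paper: both pass to the completion $\hat K$, invoke elimination of valued-field quantifiers, and exploit that the auxiliary sorts of $K$ and $\hat K$ coincide. The paper phrases the endgame as ``whether $X(K)=Y(K)$ depends only on the isomorphism classes of the residue rings'', while you transfer an individual witness $x$ along the $\cL$-embedding $K\hookrightarrow\hat K$; these are two packagings of the same idea.

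One small omission worth fixing: when you check that $K\hookrightarrow\hat K$ is an $\cL$-embedding, you list only the symbols of $\cL_{\rm high}$. In examples~2 and~3 the language also contains the analytic function symbols, and a non-complete $\cT$-field carries an abstract interpretation of these; you must verify that this abstract interpretation agrees with the restriction of the natural one on $\hat K$. That is precisely what the paper handles by citing Definition~4.1.6 of \cite{CLip}. Also, Theorem~\ref{cdI} does not itself state the quantifier-elimination result you need; the correct references are \cite{Past} and \cite{CLip}.
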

\begin{proof}
By the elimination results for valued field quantifiers in certain definitial expansions of the language corresponding to $\cT$ of \cite{CLip}, it follows that, for any given $\cT$-field $K$, whether $X(K)=Y(K)$ or not only depends on the isomorphism classes of the residue rings $\cO_K/\cM_K^n$ of $K$. Also, for any $\cT$-field $K$, its completion is also a $\cT$-field (in particular, the completion is an $\cL$-structure). This is clear in the semi-algebraic examples (that is, without non-algebraic analytic functions), and in the subanalytic case that $\cL$ contains non-algebraic analytic functions, this follows from Definition 4.1.6 of \cite{CLip}.   Since the completion of $K$ and $K$ itself have isomorphic such residue rings, the lemma follows.
\end{proof}

\subsection{Dimension}

Since $\cT$ is in particular $b$-minimal in the sense of \cite{CLb} by Corollary \ref{b-min}, for each $\cT$-field $K$ and each definable subassignment $\varphi$ we can take the dimension of $\varphi(K)$ to be as defined in \cite{CLb}, and use the dimension theory from \cite{CLb}. In the context of finite $b$-minimality, for nonempty and definable $X\subset h[n,m,r](K)$, this dimension is defined by induction on $n$, where for $n=0$ the dimension of $X$ is defined to be zero, and, for $n=1$, $\dim X=1$ if and only if $p(X)$ contains a ball where $p:h[1,m,r](K)\to K$ is the coordinate projection, and one has $\dim X=0$ otherwise. For general $n\geq 1$, the dimension of such $X$ is the maximal number $r>0$ such that for some coordinate projection  $p:h[n,m,r](K)\to K^r$, $p(X)$ contains a Cartesian product of $r$ balls if such $r$ exists and the dimension is $0$ otherwise. Note that a nonempty definable $X\subset h[n,m,r](K)$ has dimension zero if and only if it is a finite set.

The dimension of a definable subassignment $\varphi$ itself is defined as the maximum of all $\varphi(K)$ when $K$ runs over all  $\cT$-fields.

For $f:X\to Y$ a definable morphism and $K$ a $\cT$-field, the relative dimension of the set $X( K)$ over $Y( K)$ (of course along $f_K$) is the maximum of the dimensions of the fibers of $f_K$, and the relative dimension of the definable assignment $X$ over $Y$ (along $f$) is the maximum of these over all $K$.

One has all the properties of \cite{CLb} for the dimensions of the  sets $\varphi(K)$ and the related properties for the definable subassignments themselves, analogous to the properties of the so-called $K$-dimension of \cite{CL}.

\section{Summation over the value group}\label{piint}
We consider a formal symbol $\LL$ and the ring
$$\AA := \ZZ \Bigl[\LL, \LL^{- 1},
\Bigl(\frac{1}{1 - \LL^{- i}}\Bigr)_{i >0}\Bigr],
$$
as subring of the ring of rational functions in $\LL$ over $\QQ$.
Furthermore, for each real number $q>1$, we consider the ring morphism
$$
\theta_q:\AA\to\RR:r(\LL)\mapsto r(q),
$$
that is, one evaluates the rational function $r(\LL)$  in $\LL$ at $q$.

Recall that $h[0,0,1]$ can be identified with $\ZZ$, since $h[0,0,1](K)=\ZZ$ for all $\cT$-fields $K$. Let $S$ be in $\Def$, that is, let $S$ be a definable subassignment.  A definable morphism $\alpha: S\to h[0,0,1]$ gives rise to a function (also denoted by $\alpha$) from $|S|$ to $\ZZ$ which sends a point $(s,K)$ on $S$ to $\alpha_K(s)$. Likewise, such $\alpha$ gives rise to the function $\LL^\alpha$ from $|S|$ to $\AA$ which sends a point $(s,K)$ on $S$ to $\LL^{\alpha_K(s)}$.

We define the ring $\cP
(S) $ of constructible Presburger functions on $S$ as the subring of
the ring of functions $|S| \rightarrow \AA$ generated by
\begin{enumerate}
\item all constant functions into $\AA$,
\item all functions $\alpha: |S| \rightarrow \ZZ$ with $\alpha:S\to h[0,0,1]$ a definable morphism,
\item all functions of the form $\LL^{\beta}$ with $\beta:S\to h[0,0,1]$ a
definable morphism.
\end{enumerate}

Note that a general element of $\cP
(S) $ is thus a finite sum of terms of the form
$ a \LL^\beta \prod_{i=1}^\ell \alpha_i$
with $a\in \AA$, and the $\beta$ and $\alpha_i$ definable morphisms from $S$ to $h[0,0,1]=\ZZ$.

For any $\cT$-field $K$,
any $q > 1$ in $\RR$, and $f$ in $\cP(S)$ we write
$\theta_{q,K}(f)  : S(K) \rightarrow \RR$ for the function sending $s\in S(K)$ to $\theta_q(f(s,K))$.

\par
Define a
partial ordering on $\cP(S)$ by setting $f\geq 0$ if for  every
 $q > 1$ in $\RR$ and every $s$ in $|S|$, $\theta_{q} (f(s)) \geq 0$. We denote by
$\cP(S)_+$ the set $\{f \in \cP(S) \, | \, f \geq 0\}$.
Write $f \geq g$ if $f -  g$ is in $\cP_+ (S) $. Similarly, write $\AA_+$ for the sub-semi-ring of $\AA$ consisting of the non-negative elements of $\AA$, namely those elements $a$ with $\theta_{q}(a)\geq 0$ for all real $q > 1$.

\par

Recall the notion of summable families in $\RR$ or $\CC$,
cf.~\cite{bbk} VII.16. In particular, a family $(z_i)_{i \in I}$ of
complex numbers is summable if and only if the family $(|z_i|)_{i
\in I}$ is summable in $\RR$.

\par
We shall say a
function $\varphi$ in $\cP (h[0,0,r])$ is integrable if for each $\cT$-field $K$ and for each real $q>1$, the family $(\theta_{q,K}(\varphi) (i))_{i \in \ZZ^r}$ is
summable.

\par
More generally we shall say a
function $\varphi$ in $\cP (S[0,0,r])$ is $S$-integrable if for each $\cT$-field $K$, for each real $q>1$, and for each $s\in S(K)$,  the family $(\theta_{q,K}(\varphi) (s, i))_{i \in \ZZ^r}$ is
summable.
The latter notion of $S$-integrability is key to all integrability notions in this paper.

\par
We denote by ${\rm I}_S\cP (S[0,0,r])$ the
collection of $S$-integrable functions in $\cP (S[0,0,r])$. Likewise, we denote by ${\rm I}_{S}\cP_+ (S[0,0,r])$ the
collection of $S$-integrable functions in $\cP_+ (S[0,0,r])$. Note that ${\rm I}_S\cP (S[0,0,r])$, resp.~${\rm I}_{S}\cP_+ (S[0,0,r])$,  is a $\cP (S)$-module, resp.~a $\cP_+(S)$-semi-module.

\par
The following is inspired by results in \cite{D85} and appears in a similar form in the context of \cite{CL}; the proof of Theorem-Definition 4.5.1 of \cite{CL} and the arguments of Section 4.6 of \cite{CL} go through. This uses finite $b$-minimality, the fact that $\cT$ is split, basic results about Presburger sets and functions, and explicit calculations for geometric series and their derivatives.

\begin{def-theorem}\label{thm:kjh}
For each $\varphi$ in ${\rm I}_{S}\cP (S[0,0,r])$ there exists
a unique function $\psi=\mu_{/S}(\varphi)$
in $\cP (S)$ such that for all
$q
> 1$, all $\cT$-fields $K$, and all $s$ in $S(K)$
\begin{equation}\label{kjh}
\theta_{q,K} (\psi) (s) = \sum_{i \in \ZZ^r} \theta_{q,K}
(\varphi) (s, i).
\end{equation}
Moreover, the mapping $\varphi\mapsto \mu_{/S}(\varphi)$ yields a
morphism of $\cP (S)$-modules
$$
\mu_{/S} : {\rm I}_S \cP (S \times
\ZZ^r) \longrightarrow \cP (S).
$$
\end{def-theorem}

Clearly, the above map $\mu_{/S}$ sends ${\rm I}_S \cP_+ (S \times
\ZZ^r)$ to $\cP_+ (S)$.
For $Y$ a definable subassignment of $S$, we denote by $\11_Y$
the function in $\cP (S)$ with value 1 on $Y$ and zero on $S
\setminus Y$. We shall denote by $\cP^0 (S)$ (resp.~$\cP^0_+ (S)$)
the subring (resp.~sub-semi-ring) of $\cP (S)$ (resp.~$\cP_+ (S)$)
generated by the functions $\11_Y$ for all definable
subassignments $Y$ of $S$ and by the constant function $\LL - 1$.

\par
If $f : Z
\rightarrow Y$ is a morphism in $\Def$, composition with $f$
yields natural pullback morphisms $f^* : \cP (Y) \rightarrow \cP (Z)$ and
$f^* : \cP_+ (Y) \rightarrow \cP_+ (Z)$. These pullback morphisms and the subrings $\cP^0 (S)$ will play a role for the richer class of motivic constructible functions. First we turn our attention to another ingredient for motivic constructible functions, coming from the residue rings. Afterwards we will glue these two ingredients together along the common subrings $\cP^0_+ (S)$ to define motivic constructible functions.

\section{Integration over the residue rings}\label{rdbis}

\subsection{}\label{Q+}On the integers side we have defined rings of (nonnegative) constructible Presburger functions $\cP_+(\cdot)$ and a summation procedure of these functions over subsets of $\ZZ^r$. On the residue rings side we will proceed differently.

\par

Let $Z$ be a definable subassignment in $\Def$.
Define the semi-group
$ \cQ_+(Z)$
as the quotient of
the free abelian semi-group
over symbols $[Y]$ with $Y_{/Z}
$ a subassignment of $Z[0,m,0]$ for some $m=(m_1,\ldots,m_s)$ with $m_i\geq 0$ and $s\geq 0$, with as distinguished map from $Y$ to $Z$ the natural projection, by the following relations.
\begin{itemize}

\item[]
\begin{equation}\label{eq0}
[\emptyset] = 0, \mbox{ where $\emptyset$ is the empty subassignment.}
\end{equation}

\item[]
\begin{equation}\label{eq1}
[Y] = [Y']
\end{equation}
if there exists a definable isomorphism $Y\to Y'$ which commutes with the projections $Y\to Z$ and $Y'\to Z$.

\item[]
\begin{equation}\label{eq2}
[Y_1 \cup Y_2] + [Y_1 \cap Y_2]
= [Y_1] + [Y_2]
\end{equation}
for $Y_1$ and $Y_2$ definable subassignments of a common $Z[0,m,0]$ for some $m$.

\item[]\begin{equation}\label{eq3}
[Y] = [Y']
\end{equation}
if for some definable subassignment $W$ of $Z[0,m,0]$ with $m = (m_1, \ldots, m_s)$, one has
$Y' = p^{-1}(W)$ and $Y=W[0,1,0]$ with $p:Z[0,(m_1+1 , m_2 \ldots, m_s),0]\to
Z[0,m,0]$ the projection.

\end{itemize}

\par
We will still write $[Y]$ for the class of $[Y]$ in $ \cQ_+ ( Z ) $ for $Y\subset Z[0,m,0]$. The relations (\ref{eq1}) and (\ref{eq3}) force the classes of $Z[0,m,0]$ and $Z[0,m',0]$ in $ \cQ_+(Z)$ to be identified for any tuples $(m_i)_i$ and $(m'_j)_j$ satisfying $\sum_j m'_j = \sum_i m_i$.
If the theory $\cT$ imposes the residue field to be perfect and if $p>0$, then the relations (\ref{eq3}) are redundant, which can be seen as follows. If $p>0$, then the map $x\mapsto x^p$ on the valuation ring induces a definable injective morphism $j$ from $h[0,1,0]$ into $h[0,2,0]$. Indeed, for any $x$ in $\cO_K$ and any $m$  in $\cM_K$, the elements $x^p$ and $(x+m)^p$ are congruent modulo $\cM_K^2$. Now, if the theory $\cT$ imposes the residue field to be perfect and still $p>0$, then $h[0,(1,1),0]\to h[0,2,0]:(x,y)\mapsto j(x)+t\cdot j(y)$ is a definable isomorphism, where $t$ here is an abbreviation for $\ac_2(1+\pi)-\ac_2(1)$, and one can do similarly for the relations (\ref{eq3}) in general. Note that perfectness of a field $k$ of characteristic $p>0$ ensures that $x\mapsto x^p$ is a bijection.
In \cite{CL}, the longer notation $SK_0(\RDef_Z)$ is used instead of $\cQ_+(Z)$ and the relations (\ref{eq3}) do not occur since only $\ac_n$ with $n=1$ is used in \cite{CL}.

\subsection{}\label{semiring} The semi-group $\cQ_+(Z)$ carries a semi-ring structure with multiplication for $Y\subset Z[0,m,0]$ and $Y'\subset Z[0,m',0]$ given by
$$
[Y]\cdot [Y'] := [Y\otimes_Z Y'],
$$
where the fibre product is taken along the coordinate projections to $Z$.
Similarly, for $f : Z_1
\rightarrow Z_2$ any morphism in $\Def$, there is a natural pullback homomorphism of semi-rings $f^* : \cQ_+ (Z_2) \rightarrow \cQ_+ (Z_1)$ which sends $[Y]$ for some $Y\subset Z_2[0,m,0]$ to $[Y\otimes_{Z_2} Z_1]$.
Write $\LL$ for the class of $Z[0,1,0]$ in $ \cQ_+ ( Z ) $. Then, by relations (\ref{eq3}) and (\ref{eq1}), one has that the class of $Z[0,m,0]$ in  $ \cQ_+ ( Z ) $ equals $\LL^{|m|}$ with $m=(m_i)_i$ and $|m|=\sum_i m_i$.
Clearly, for each $a\in \cQ_+(Z)$, there exists a tuple  $m$ and a $Y\subset Z[0,m,0]$ such that $a=[Y]$.

\par
To preserve a maximum of information at the level of the residue rings, we will integrate functions in $\cQ_+(\cdot)$ over residue ring variables in a formal way.
Suppose that $Z = X[0,k,0]$ for some tuple $k$, let $a$ be in $\cQ_+(Z)$ and write $a$ as $[Y]$ for some $Y\subset Z[0,n,0] $. We write $\mu_{/X}$ for the corresponding formal integral in the fibers of the coordinate projection $Z\to X$
$$
\mu_{/X}: \cQ_+(Z) \to \cQ_+(X): [Y] \to [Y],
$$
where the class of $Y$ is first taken in $\cQ_+(Z)$ and then in $\cQ_+(X)$.
Note that this allows one to integrate functions from $\cQ_+$ over residue ring variables, but of course not over valued field neither over value group variables. To integrate over any kind of variables, we will need to combine the value group part $\cP_+$ and the residue rings part $\cQ_+$.

\section{Putting $\cP_+$ and $\cQ_+$ together to form $\cC_+$}\label{C+}

\subsection{}
Many interesting functions on Henselian valued fields have a component that comes essentially from the value group and one that comes from residue rings.
For $Z$ in $\Def$, we will glue the pieces $\cP_+(Z)$ and $\cQ_+(Z)$ together by means of the common sub-semi-ring $\cP_+^0(Z)$.  Recall that $\cP_+^0(Z)$ is the sub-semi-ring of $\cP_+(Z)$ generated by the characteristic functions $\11_Y$ for all definable
subassignments $Y\subset Z$ and by the constant function $\LL - 1$.
\par

Using the canonical semi-ring  morphism  $\cP^0_+ (Z) \rightarrow \cQ_+(Z)$, sending $\11_Y$ to $[Y]$ and $\LL- 1$ to $\LL - 1$, we define the semi-ring $\cC_+(Z)$  as
$$
\cP_+(Z)\otimes_{\cP_+^0(Z)}\cQ_+(Z).
$$
We call elements of $\cC_+(Z)$ (nonnegative) constructible motivic functions on $Z$.

\par
If $f : Z
\rightarrow Y$ is a morphism in $\Def$, we find natural pullback morphisms $f^* : \cC_+ (Y) \rightarrow \cC_+ (Z)$, by the tensor product definition of $\cC_+(\cdot)$. Namely, $f^*$ maps $\sum_{i=1}^r a_i\otimes b_i$ to $\sum_i f^*(a_i)\otimes f^*(b_i)$, where $a_i\in \cP_+ (Y)$ and $b_i\in \cQ_+ (Y)$.

\subsection{Evaluation at points}\label{eval}
For a definable subassignment $X$ and a function $\varphi$ in $\cC_+ (X)$, one can evaluate $\varphi$ at points lying on $X$ as follows. For any $\cT$-field $K$, one may consider the theory $\cT(K)$ in the  language $\cL(K)$ as in Proposition \ref{cTR}. For any $\cT(K)$-subassignment $Z$, let us temporarily write $\cQ_{+,K}(Z)$ for the object $\cQ_+(Z)$ as defined in Section \ref{Q+} with $\cT(K)$ and $\cL(K)$ instead of $\cT$ and $\cL$. Let us write likewise $\cP_{+,K}(Z)$, $\cC_{+,K}(Z)$, and so on, when $\cT(K)$ and $\cL(K)$ are used instead of $\cT$ and $\cL$.
Let $X$ be a definable subassignment and take $\varphi$ in $\cC_+ (X)$. For any $\cT$-field $K$ and element $x_0$ of $X(K)$, let us write $X_x$ for the definable $\cT(K)$-subassignment such that, for any $\cT(K)$-field $L$, the set $X_x(L)$ is the singleton $\{x_0\}$ if $x_0$ lies on $X(L)$, and the empty set otherwise. The evaluation of $\varphi$ at the point $x=(x_0,K)$ of $X$ is denoted by $i_x^*(\varphi)$, and is defined as the element of $\cC_{+,K}(X_x)$ given by fixing, inside all the formulas involved in the description of $\varphi$, the tuple of variables running over $X$ by the tuple $x_0$. Other notions can be defined similarly. For example, for a definable morphism $f:Y\to X$, a point $(x_0,K)$ on $X$, and $X_x$ as above in this section, one defines the fiber $f^{-1}(X_x)$ as the definable $\cT(K)$-subassignment given by the conjunction of the formula describing $Y$ with a $\cL(K)$-formula expressing that $f(y)=x_0$.

\subsection{Interpretation in non-archimedean local fields.}\label{nonarch}

An important feature of our setting (as well as in the settings of \cite{CL}, \cite{exp}, and \cite{JAMS}) is that the motivic constructible functions and their integrals interpolate actual functions and their integrals on non-archimedean local fields, and even more generally on $\cT$-fields with finite residue field.

\par

Let $X\subset h[n,m,r]$ be in $\Def$,  let $\varphi$ be in $\cC_+(X)$, and let $K$ be a $\cT$-field with finite residue field. In this case $\varphi$ gives rise to an actual set-theoretic function $\varphi_K$ from $X(K)$ to $\QQ_{\geq 0}$, defined as follows:

\par
For $a$ in  $\cP_+(X)$, one gets $a_K:X(K)\to \QQ_{\geq 0}$ by replacing $\LL$ by $q_K$, the number of elements in the residue field of $K$.

\par
For $b=[Y]$ with $Y$ a subassignment of $X[0,m,0]$ in $\cQ_+(X)$, if one writes $p:Y(K)\to X(K)$ for the projection, one defines $b_K:X(K)\to \QQ_{\geq 0}$ by sending $x\in X(K)$ to $\# (p^{-1}(x))$, that is, the number of points in $Y(K)$ that lie above $x\in X(K)$.

\par
For our general $\varphi$ in $\cC_+(X)$, write $\varphi$ as a finite sum $\sum_i a_i\otimes b_i$ with $a_i\in \cP_+(X)$ and $b_i\in \cQ_+(X)$. Our general definitions are such that the function
\begin{equation*}
\varphi_K :
 X(K)\to \QQ_{\geq 0} :
 x\mapsto \sum_i a_{iK}(x)\cdot b_{iK}(x)
\end{equation*}
does not depend on the choices made for $a_i$ and $b_i$.

\subsection{Integration over residue rings and value group}

We have the following form of independence (or orthogonality) between the integer part and the residue rings part of $\cC_+(\cdot)$.

\begin{prop}\label{peprod0}
Let $S$ be in $\Def$.
The canonical morphism
$$
\cP_+ ( S[0,0,r]) \otimes_{\cP^0_+ (S)} \cQ_+(S[0,m,0])
\longrightarrow \cC_+ (S[0,m,r])
$$
is an isomorphism of semi-rings, where the homomorphisms $p^*:\cP^0_+ (S)\to \cP_+ (S[0,0,r])$ and $q^*:\cP^0_+ (S)\to \cQ_+(S[0,m,0])$ are the pullback homomorphisms of the projections $p:S[0,0,r]\to S$ and $q:S[0,m,0]\to S$.
\end{prop}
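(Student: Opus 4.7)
The canonical morphism sends a simple tensor $a\otimes b$, for $a\in\cP_+(S[0,0,r])$ and $b\in\cQ_+(S[0,m,0])$, to $p^*(a)\cdot q^*(b)\in\cC_+(S[0,m,r])$, where $p\colon S[0,m,r]\to S[0,0,r]$ and $q\colon S[0,m,r]\to S[0,m,0]$ denote the coordinate projections. Well-definedness over $\cP^0_+(S)$ is immediate from $\pi\circ p=\pi'\circ q\colon S[0,m,r]\to S$.

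My plan is to prove bijectivity by leveraging the splitness axiom of $\cT$ in its family-uniform form, obtained from the axiom of Definition \ref{def:T} together with a standard model-theoretic compactness argument over the base $S$.

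For surjectivity, I would establish two structural decompositions. First, every element of $\cP_+(S[0,m,r])$ can be written as $\sum_j\11_{\tilde U_j}\cdot p^*(a_j)$ for some finite $S$-definable partition $S[0,m,0]=\bigsqcup_j U_j$, with $\tilde U_j\subset S[0,m,r]$ the preimage of $U_j$ and $a_j\in\cP_+(S[0,0,r])$; this follows by applying splitness to the graphs of the definable morphisms $\alpha,\beta$ underlying the generators $\alpha,\LL^\beta$ of $\cP_+(S[0,m,r])$, observing that each such graph, uniformly in $S$, splits in a way that forces $\alpha$ (resp.~$\beta$) to factor through $p$ on each residue-ring piece. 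Second, every $\cQ_+$-generator $[Y]$ with $Y\subset S[0,m+k,r]$ equals a finite sum of terms $p^*(\11_{B_\ell})\cdot q^*[A_\ell]$ with $A_\ell\subset S[0,m+k,0]$ and $B_\ell\subset S[0,0,r]$; this comes from splitness applied to $Y$, giving $Y=\bigsqcup_\ell A_\ell\times_S B_\ell$, together with a direct fibre-product computation in $\cQ_+(S[0,m,r])$. Combining both decompositions inside $\cC_+(S[0,m,r])$ and using the $\cP^0_+(S[0,m,r])$-balancing of the tensor product to slide residue-ring-only characteristic functions into the $\cQ_+$-factor (via the canonical map $\cP^0_+\to\cQ_+$) and value-group-only ones into the $\cP_+$-factor, every element of $\cC_+(S[0,m,r])$ is realised in the image.

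For injectivity my plan is to use the surjectivity decompositions to define an explicit inverse, and then verify that the defining relations of $\cC_+(S[0,m,r])$---the relations (\ref{eq1})--(\ref{eq3}) for $\cQ_+$, the ring relations for $\cP_+$, and the $\cP^0_+(S[0,m,r])$-balancing---all translate, under this inverse, to relations already valid in the $\cP^0_+(S)$-balanced tensor on the left. The main obstacle is the last point: the $\cP^0_+(S[0,m,r])$-balancing allows sliding arbitrary characteristic functions of definable subassignments of $S[0,m,r]$ across the tensor, and one must check that once split via the axiom, these reduce to $\cP^0_+(S)$-moves. Coherence follows from the fact that any two valid splittings of a given set admit a common refinement (itself produced by another application of splitness), together with the hypothesis that $\cT$ introduces no sorts beyond those of $\cL_{\mathrm{high}}$, so that no hidden interaction between residue-ring and value-group variables over an $S$-parameter can survive the decomposition.
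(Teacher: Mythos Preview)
Your proposal is correct and follows the same route as the paper: the paper's proof consists of the single sentence ``Direct consequence of the fact that $\cT$ is split,'' and your plan is precisely an unpacking of what that sentence means. Your use of compactness to pass from the pointwise split axiom to an $S$-uniform splitting, the decomposition of $\cP_+$-generators via splitting the graphs of the integer-valued morphisms, the decomposition of $\cQ_+$-generators $[Y]$ via splitting $Y$ itself, and the reduction of the $\cP^0_+(S[0,m,r])$-balancing to $\cP^0_+(S)$-moves by splitting characteristic functions are exactly the ingredients one needs, and your identification of the main obstacle (the larger balancing subring on the target side) is accurate. There is nothing to add beyond noting that the paper chose not to spell any of this out.
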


The mentioned canonical morphism of Proposition \ref{peprod0} sends $a\otimes b$ to $p_1^*(a)\otimes p_2^*(b)$, where $p_1:S[0,m,r]\to S[0,0,r]$ and $p_1:S[0,m,r]\to S[0,m,0]$ are the projections.
\begin{proof}Direct consequence of the fact that $\cT$ is split.
\end{proof}
Recall that for $a$ in $\cQ_+(X)$, one can write $a=[Y]$ for some $Y$ in $\Def_X$, say, with specified morphism $f:Y\to X$. We shall write $\11_a := \11_{f(Y)}$ for the characteristic function of $f(Y)$, the ``support'' of $a$.

\begin{def-lem}\label{def:aux}
Let $\varphi$ be in $\cC_+(Z)$ and suppose that $Z=X[0,m,r]$ for some $X$ in $\Def$. Say that $\varphi$ is $X$-integrable if one can write $\varphi = \sum_{i=1}^\ell a_i\otimes b_i$ with $a_i\in \cP_+(X[0,0,r])$ and $b_i\in \cQ_+(X[0,m,0])$ as in Proposition \ref{peprod0} such that moreover the $a_i$ lie in ${\rm I}_{X}\cP_+ (X[0,0,r])$ in the sense of Section \ref{piint}.
If this is the case, then
$$
\mu_{/X}(\varphi):= \sum_i \mu_{/X}(a_i) \otimes \mu_{/X}(b_i)\ \in \cC_+(X)
$$
does not depend on the choice of the $a_i$ and $b_i$ and is called the integral of $\varphi$ in the fibers of the coordinate projection $Z\to X$.
\end{def-lem}
\begin{proof}
Using the natural maps also occurring in Proposition \ref{peprod0}, the Lemma-Definition can be restated that the map $w$ on the free abelian semi-group $W$ on ${\rm I}_{X}\cP_+ (X[0,0,r])\times \cQ_+(X[0,m,0])$ sending $\sum_i (a_i,b_i)$ to $\sum_i \mu_{/X}(a_i) \otimes \mu_{/X}(b_i)$ factorizes through the tensor product of semi-groups ${\rm I}_{X}\cP_+ (X[0,0,r])\otimes_{\cP^0_+ (S)} \cQ_+(X[0,m,0])$.
But this follows from the obvious linearity properties of $w$, namely, that $cw(a,b)=w(ca,b)=w(a,cb)$, $w(a+a',b)=w(a,b)+w(a',b)$ and $w(a,b+b')=w(a,b)+w(a,b')$  for $(a,b)$ and $(a',b')$ in $W$ and $c\in \cP^0_+ (S)$.
\end{proof}

The following lemma is a basic form of a projection formula which concerns pulling a factor out of the integral if the factor depends on other variables than the ones that one integrates over.

\begin{lem}
\label{projaux}
Let $\varphi$ be in $\cC_+(Z)$ such that $\varphi$ is $X$-integrable, where $Z=X[0,m,r]$ for some $X$ in $\Def$. Let $\psi$ be in $\cC_+(X)$ and let $p:Z\to X$ be the projection. Then $p^*(\psi)\varphi$ is $X$-integrable and
$$
\mu_{/X}(p^*(\psi)\varphi) = \psi \mu_{/X}(\varphi)
$$
holds in $\cC_+(X)$.
\end{lem}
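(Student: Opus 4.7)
The strategy is to reduce to simple tensors, use the product decomposition of $\cC_+(Z)$ coming from Proposition \ref{peprod0}, and then apply the projection formula separately at the $\cP_+$ level (where it is the $\cP(X)$-linearity of $\mu_{/X}$ from Theorem-Definition \ref{thm:kjh}) and at the $\cQ_+$ level (where it is formal, coming from the identification of fiber products via relation \eqref{eq1}).

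First I would write, using Proposition \ref{peprod0} and the hypothesis, $\varphi = \sum_i a_i \otimes b_i$ with $a_i \in {\rm I}_X\cP_+(X[0,0,r])$ and $b_i \in \cQ_+(X[0,m,0])$, and $\psi = \sum_j c_j \otimes d_j$ with $c_j \in \cP_+(X)$ and $d_j \in \cQ_+(X)$. By bilinearity both sides of the claimed identity are additive in $\varphi$ and $\psi$, so I may assume $\varphi = a\otimes b$ and $\psi = c\otimes d$ are simple tensors.

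Next, let $p_1: X[0,0,r]\to X$ and $p_2: X[0,m,0]\to X$ be the projections, so that $p = p_1\circ q_1 = p_2\circ q_2$ where $q_1: Z\to X[0,0,r]$ and $q_2: Z\to X[0,m,0]$ are the natural projections. Since the pullback $p^*: \cC_+(X)\to \cC_+(Z)$ respects tensors (being induced on each factor) and since functions on $X$ pulled back to $Z$ depend only on the valued-field coordinate, one identifies $p^*(c) = q_1^*\bigl(p_1^*(c)\bigr)$ with $p_1^*(c)\otimes 1$ and $p^*(d) = q_2^*\bigl(p_2^*(d)\bigr)$ with $1\otimes p_2^*(d)$ in the presentation $\cC_+(Z) = \cP_+(X[0,0,r]) \otimes_{\cP^0_+(X)} \cQ_+(X[0,m,0])$ from Proposition \ref{peprod0}. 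Hence
\[
p^*(\psi)\,\varphi \;=\; \bigl(p_1^*(c)\,a\bigr)\,\otimes\,\bigl(p_2^*(d)\,b\bigr).
\]

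Now the $\cP_+$-side: since $a$ is $X$-integrable and $c\in \cP_+(X)$ is fiberwise bounded (a single value per $s\in |X|$), the identity
\[
\sum_{i\in \ZZ^r}\theta_{q,K}\bigl(p_1^*(c)\,a\bigr)(s,i) \;=\; \theta_{q,K}(c)(s)\;\sum_{i\in \ZZ^r}\theta_{q,K}(a)(s,i)
\]
shows $p_1^*(c)\,a\in {\rm I}_X\cP_+(X[0,0,r])$, and Theorem-Definition \ref{thm:kjh} (the $\cP(X)$-module property of $\mu_{/X}$) gives $\mu_{/X}\bigl(p_1^*(c)\,a\bigr) = c\,\mu_{/X}(a)$. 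For the $\cQ_+$-side, writing $d = [Y_d]$ and $b = [Y_b]$, the product $p_2^*(d)\cdot b$ is represented by the iterated fiber product which, after reordering of residue-ring coordinates (a definable isomorphism, so relation \eqref{eq1} applies), coincides with the fiber product $Y_d\otimes_X Y_b$ that represents $d\cdot \mu_{/X}(b)$ in $\cQ_+(X)$; hence $\mu_{/X}\bigl(p_2^*(d)\,b\bigr) = d\,\mu_{/X}(b)$. Combining through Lemma-Definition \ref{def:aux},
\[
\mu_{/X}\bigl(p^*(\psi)\,\varphi\bigr) \;=\; \bigl(c\,\mu_{/X}(a)\bigr)\otimes \bigl(d\,\mu_{/X}(b)\bigr) \;=\; (c\otimes d)\bigl(\mu_{/X}(a)\otimes \mu_{/X}(b)\bigr) \;=\; \psi\,\mu_{/X}(\varphi),
\]
and summing over the indices $i,j$ finishes the argument.

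The only genuinely delicate point is the clean identification of $p^*(\psi)$ with $p_1^*(c)\otimes p_2^*(d)$ inside the Proposition \ref{peprod0} decomposition of $\cC_+(Z)$; once this is in place, the projection formula reduces to the two independent projection formulas already available for $\cP_+$ and $\cQ_+$.
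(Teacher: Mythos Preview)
Your proof is correct and follows essentially the same approach as the paper, which simply invokes Lemma-Definition~\ref{def:aux}, the definition of $p^*$, and the linearity of $\mu_{/X}$ on both $\cP_+$ and $\cQ_+$. Your write-up is a careful unpacking of exactly these ingredients: the tensor decomposition via Proposition~\ref{peprod0}, the $\cP(X)$-module property from Theorem-Definition~\ref{thm:kjh} on the Presburger side, and the fiber-product identity on the $\cQ_+$ side.
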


Note that Lemma \ref{projaux} is immediate when $m=0$.
\begin{proof}
By Lemma-Definition \ref{def:aux}, the definition of $p^*$, and the linearity of $\mu_{/X}$ on $\cP_+$ and on $\cQ_+$.
\end{proof}
Using the natural morphisms $\cP_+(Z)\to \cC_+(Z)$ which sends $\psi$ to $\psi\otimes [Z]$, and $\cQ_+(Z)\to \cC_+(Z)$ which sends $\nu$ to $\11_Z\otimes \nu$, we can formulate the following. (Note that $\cP_+(Z)\to \cC_+(Z):b\mapsto \11_Z\otimes b$ is not necessarily injective neither necessarily surjective.)

\begin{lem}\label{lift}
For any $\varphi\in \cC_+(Z)$ there exist $\psi$ in $\cP_+(Z[0,m,0])$ and $\nu $ in $\cQ_+(Z[0,0,r])$ for some $m$ and $r$ such that $\nu$ is $Z$-integrable and $\varphi = \mu_{/Z}(\psi) = \mu_{/Z}(\nu )$.
\end{lem}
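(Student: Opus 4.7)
My approach treats both equalities in parallel: first I perform a uniform decomposition of $\varphi$, then I lift either the residue-ring factor (to obtain $\psi$) or the Presburger factor (to obtain $\nu$).

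\textbf{Common decomposition.} Using Proposition~\ref{peprod0} with $m=r=0$ (which is just the definition of $\cC_+$), I write $\varphi = \sum_{i=1}^\ell a_i \otimes b_i$ with $a_i \in \cP_+(Z)$ and $b_i = [Y_i] \in \cQ_+(Z)$, where $Y_i \subset Z[0,m_i,0]$. By relation~\eqref{eq3}, the $m_i$ can be consolidated into a single tuple $m$ so that $Y_i \subset Z[0,m,0]$ for every $i$.

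\textbf{Construction of $\psi$.} Let $p : Z[0,m,0] \to Z$ denote the projection and set
$$
\psi := \sum_{i=1}^\ell p^*(a_i)\cdot \11_{Y_i} \;\in\; \cP_+(Z[0,m,0]).
$$
Under the embedding $\cP_+(Z[0,m,0]) \to \cC_+(Z[0,m,0])$ and the isomorphism of Proposition~\ref{peprod0} (taken with $r=0$), the functions $\11_{Y_i}$ lie in $\cP^0_+(Z[0,m,0])$ and therefore slide across the tensor product, becoming $[Y_i] \in \cQ_+(Z[0,m,0])$; hence $\psi$ corresponds to $\sum_i a_i \otimes [Y_i]$ in $\cP_+(Z) \otimes_{\cP^0_+(Z)} \cQ_+(Z[0,m,0])$. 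Since no integer variables are present, the $\mu_{/Z}$ of Lemma-Definition~\ref{def:aux} acts trivially on the Presburger side and sends $[Y_i] \in \cQ_+(Z[0,m,0])$ to $[Y_i] \in \cQ_+(Z)$ on the $\cQ_+$ side, giving $\mu_{/Z}(\psi) = \sum_i a_i \otimes [Y_i] = \varphi$.

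\textbf{Construction of $\nu$.} I would dually represent each Presburger factor $a_i$ as $\mu_{/Z}(c_i)$ for a $Z$-integrable $c_i \in \cQ_+(Z[0,0,r_i])$ and then combine the $c_i$ with the $[Y_i]$ by fibre products over $Z$. An integer-valued generator $\alpha \geq 0$ is encoded by $T_\alpha := \{(z,k) : 1 \leq k \leq \alpha(z)\} \subset Z[0,0,1]$, so that $\mu_{/Z}([T_\alpha]) = \alpha$. An exponential generator $\LL^\beta$ is encoded through residue-ring truncations: for $0 \leq \beta \leq N$, the ideal $\pi_K^{N-\beta}(\cO_K/\cM_K^N)$ has cardinality $q^\beta$; to accommodate the unbounded range of $\beta$, one introduces further integer coordinates parametrizing the truncation depth $N$. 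Multiplying these encodings with the $[Y_i]$ via fibre products over $Z$ produces the required $\nu \in \cQ_+(Z[0,0,r])$, and $\mu_{/Z}(\nu) = \varphi$ then follows from Lemma-Definition~\ref{def:aux} together with the projection formula of Lemma~\ref{projaux}.

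\textbf{Main obstacle.} The principal difficulty is the $\nu$ construction, specifically encoding $\LL^\beta$ when $\beta$ is unbounded on $Z$: individual residue-ring fibres of any single class have size bounded by $q^{|m|}$ for fixed $m$, so truncations of varying depth must be glued across auxiliary integer coordinates while the total object remains uniformly definable. The split and finite $b$-minimality axioms of $\cT$ provide exactly the uniform decomposition needed for this glueing, and once it is carried out the projection formula of Lemma~\ref{projaux} lets one absorb the $[Y_i]$-factors without disturbing $Z$-integrability.
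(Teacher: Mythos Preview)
Your construction of $\psi$ is correct and is precisely the intended argument; the paper's proof is the single sentence ``Clear by the fact that $\cT$ is split,'' and what you wrote is its natural unpacking via Proposition~\ref{peprod0}.

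The $\nu$ construction, however, runs into an obstruction that is not merely technical but fatal: the obstacle you flag cannot be overcome, and the $\nu$ half of the lemma appears to be false as stated. For any $Z$-integrable $\nu \in \cQ_+(Z[0,0,r])$ and any $\cT$-field $K$ with finite residue field, Lemma~\ref{nonarchsum} gives $\bigl(\mu_{/Z}(\nu)\bigr)_K(z) = \sum_{y} \nu_K(z,y)$, where each summand $\nu_K(z,y)$ is a fibre cardinality (Section~\ref{nonarch}), hence a nonnegative integer, and the sum is finite by integrability. Thus $\bigl(\mu_{/Z}(\nu)\bigr)_K$ is $\ZZ_{\geq 0}$-valued. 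But already the constant $\varphi = \LL^{-1} \in \AA_+ \subset \cC_+(Z)$ has $\varphi_K \equiv q_K^{-1}\notin\ZZ$, so it cannot equal $\mu_{/Z}(\nu)$ for any such $\nu$. Your unbounded-$\beta$ example fails for the same reason: after specialization, $\bigl(\mu_{/Z}(\nu)\bigr)_K(z)$ is bounded by a Presburger fibre count times a \emph{fixed} power $q_K^{|m'|}$ (the tuple $m'$ being fixed once $\nu=[Y]$ with $Y\subset Z[0,m',r]$ is chosen), whereas $q_K^{\beta(z)}$ grows without bound in $\beta(z)$. No amount of ``gluing across integer coordinates'' helps, since the residue-ring depth in any single class of $\cQ_+$ is not itself a variable. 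Note finally that the paper never invokes the $\nu$ half: both uses of Lemma~\ref{lift} (in Lemma-Definition~\ref{integrableC} and in the proof of~\ref{generalint}) call only on the $\psi$ lift.
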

\begin{proof}
Clear by the fact that $\cT$ is split.
\end{proof}

Here is a first instance of the feature that  relates integration of motivic functions with actual integration (or summation) on $\cT$-fields with finite residue field.
\begin{lem}\label{nonarchsum}
Let $\varphi$ be in $\cC_+(Z)$ and suppose that $Z=X[0,m,r]$ for some $X$ in $\Def$. Let $K$ be a $\cT$-field with finite residue field and consider $\varphi_K$ as in Section \ref{nonarch}.  If $\varphi$ is $X$-integrable then, for each $x\in X(K)$, $\varphi_K(x,\cdot):y\mapsto \varphi_K(x,y)$ is integrable against the counting measure, and  if one writes $\psi$ for $\mu_{/X}(\varphi)$, then
$$
\psi_K (x) = \sum_{y}  \varphi_K(x,y)
$$
for each $x\in X$, where the summation is over those $y$ such that $(x,y)\in Z(K)$.
\end{lem}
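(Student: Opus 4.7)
The plan is to reduce to a pure tensor $\varphi = a \otimes b$ by linearity, then treat the Presburger factor via Theorem-Definition \ref{thm:kjh} and the residue-ring factor via the counting interpretation of $\cQ_+$, and finally re-assemble by Fubini for nonnegative terms.

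First I would invoke the definition of $X$-integrability (Lemma-Definition \ref{def:aux}) to write
\[
\varphi = \sum_{\alpha = 1}^{\ell} a_\alpha \otimes b_\alpha, \qquad a_\alpha \in {\rm I}_X\cP_+(X[0,0,r]),\ b_\alpha \in \cQ_+(X[0,m,0]),
\]
and set $\psi = \mu_{/X}(\varphi) = \sum_\alpha \mu_{/X}(a_\alpha) \otimes \mu_{/X}(b_\alpha)$. Unravelling the definitions in Section \ref{nonarch} via the isomorphism of Proposition \ref{peprod0} (with $q_1^*a_\alpha$ and $q_2^* b_\alpha$ multiplied pointwise after pullback to $X[0,m,r]$), and writing $z = (x,y,i) \in Z(K)$ with $y$ in the finite product of residue rings and $i \in \ZZ^r$, one gets
\[
\varphi_K(x,y,i) = \sum_{\alpha=1}^{\ell} a_{\alpha K}(x,i)\, b_{\alpha K}(x,y).
\]

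Next I would check integrability and apply Fubini. Since $a_\alpha \in \cP_+$ and $b_\alpha \in \cQ_+$, both $a_{\alpha K}$ and $b_{\alpha K}$ are nonnegative, so every term is nonnegative. The residue-ring factor has only finitely many values $y$ because the residue field of $K$ is finite, and for each $x$ the family $(a_{\alpha K}(x,i))_{i\in \ZZ^r}$ is summable by the $X$-integrability of $a_\alpha$ in the sense of Section \ref{piint}, applied with the real number $q = q_K$. Hence $(\varphi_K(x,y,i))_{(y,i)}$ is summable, and Fubini for nonnegative terms gives
\[
\sum_{(y,i)} \varphi_K(x,y,i) = \sum_{\alpha} \Bigl(\sum_{i \in \ZZ^r} a_{\alpha K}(x,i)\Bigr)\Bigl(\sum_{y} b_{\alpha K}(x,y)\Bigr).
\]

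The heart of the argument is then to identify each inner sum with the corresponding specialization of a motivic integral. For the Presburger factor, Theorem-Definition \ref{thm:kjh} applied with $q = q_K$ yields
\[
\bigl(\mu_{/X}(a_\alpha)\bigr)_K(x) = \sum_{i \in \ZZ^r} a_{\alpha K}(x,i).
\]
For the residue-ring factor, writing $b_\alpha = [Y_\alpha]$ with $Y_\alpha \subset X[0,m,0]$ and denoting by $p: Y_\alpha \to X$ and $q: Y_\alpha \to X[0,m,0]$ the structure maps, the definition in Section \ref{nonarch} gives $b_{\alpha K}(x,y) = \#(q_K^{-1}(x,y))$, and since the $y$-fibres partition $p_K^{-1}(x)$,
\[
\sum_y b_{\alpha K}(x,y) = \#\bigl(p_K^{-1}(x)\bigr) = \bigl(\mu_{/X}(b_\alpha)\bigr)_K(x).
\]
Combining the last three displays and the definition of $\psi_K$ from $\psi = \sum_\alpha \mu_{/X}(a_\alpha) \otimes \mu_{/X}(b_\alpha)$ gives the desired equality $\sum_{(y,i)} \varphi_K(x,y,i) = \psi_K(x)$.

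The only real obstacle is bookkeeping: one must check that the tensor decomposition of $\varphi$ and the tensor decomposition of $\psi$ are respected by evaluation at $K$ (which is the content of the well-definedness statement in Section \ref{nonarch} together with Proposition \ref{peprod0}), so that the two summation operations — the formal $\ZZ^r$-summation in $\cP_+$ via Theorem-Definition \ref{thm:kjh} and the formal residue-ring summation in $\cQ_+$ — both correspond, upon specialization, to ordinary summation against the counting measure. Once this is in place, nonnegativity makes the Fubini interchange cost-free.
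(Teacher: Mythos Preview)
Your proof is correct and is precisely the detailed unwinding that the paper's one-line proof (``Clear by the definitions of $\varphi_K$ and $\mu_{/X}$'') leaves implicit. One small slip: when you write $Y_\alpha \subset X[0,m,0]$ you really mean $Y_\alpha \subset X[0,m,0][0,m',0]$ for some tuple $m'$, since an element of $\cQ_+(X[0,m,0])$ is the class of a subassignment living over $X[0,m,0]$ in extra residue-ring coordinates; your subsequent use of $q$ with possibly nontrivial fibres shows this is what you intended, and the argument goes through unchanged.
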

\begin{proof}
Clear by the definitions of $\varphi_K$ and $\mu_X$.
\end{proof}

\section{Integration over one valued field variable}

For the moment let $K$ be any discretely valued field. For a ball $B\subset K$ and for any real number $q>1$, define $\theta_{q}(B)$ as the real number $q^{-\ord b}$, where $b\in K^\times$ is such that $B=a+b\cO_K$ for some $a\in K$. We call $\theta_{q}(B)$ the $q$-volume of $B$.

Next we will define a naive and simple notion of step-function. Finite $b$-minimality will allow us to reduce part of the integration procedure to step-functions. A finite or countable collection of balls in $K$, each with different $q$-volume, is called a step-domain. We will identify a step-domain $S$ with the union of the balls in $S$. This is harmless since one can recover the individual balls from their union since they all have different $q$-volume. Call a nonnegative real valued function $\varphi:K\to\RR_{\geq 0}$ a step-function if there exists a unique step-domain $S$ such that $\varphi$ is constant and nonzero on each ball of $S$ and zero outside $S\cup \{a\}$ for some $a\in K$. Note that requiring uniqueness of the step-domain $S$ for $\varphi$ is redundant, except when the residue field has two elements.

\par

Let $q>1$ be a real number.  Say that a step-function $\varphi:K\to\RR_{\geq 0}$ with step-domain $S$ is $q$-integrable over $K$ if and only if
\begin{equation}\label{q-in}
\sum_{B\in S}  \theta_{q}(B)\cdot  \varphi (B)<\infty,
\end{equation}
where one sums over the balls $B$ in $S$, and then the expression (\ref{q-in}) is called the $q$-integral of $\varphi$ over $K$.
Using Theorem \ref{thm:kjh} one proves the following.
\begin{def-lem}\label{step-int} Suppose that $Z=X[1,0,0]$ for some $X$ in $\Def$.
Let $\varphi$ be in $\cP_+(Z)$. Call $\varphi$ an $X$-integrable family of step-functions if for each $\cT$-field $K$, for each $x\in X(K)$, and for each $q>1$, the function
\begin{equation}\label{thetaqphi0}
\theta_{q,K}(\varphi)(x,\cdot):K\to\RR_{\geq 0}:t \mapsto \theta_{q,K}(\varphi)(x,t)
\end{equation}
is a step-function which is $q$-integrable over $K$. If $\varphi $ is such a family, then there exists a unique function $\psi$ in $\cP_+(X)$ such that
$\theta_{q,K}(\psi)(x)$ equals the $q$-integral over $K$ of (\ref{thetaqphi0}) for each $\cT$-field $K$, each $x\in X(K)$, and each $q>1$.
We then call $\varphi$ $X$-integrable, we write
$$
\mu_{/X}(\varphi):=\psi
$$
and call $\mu_{/X}(\varphi)$ the integral of $\varphi$ in the fibers of $Z\to X$.
\end{def-lem}
\begin{proof}
Direct consequence of Theorem \ref{thm:kjh}. Indeed, for all $K$, $x\in X(K)$, and $t\in K$, the value of $\theta_{q,K}(\varphi(x,t))$ only depends on the $q$-volume (and thus of the radius) of the unique ball in the step-domain of $\theta_{q,K}(\varphi)(x,\cdot)$ containing $t$ if there is such ball and this value is zero if there is no such ball, hence it is clear how to replace $\varphi$ by some $\varphi_0$ in $\cP_+(X[0,0,1])$ such that one can take $\psi=\mu_{/X}(\varphi_0)$, the latter being defined in \ref{thm:kjh}. Uniqueness of $\psi$ with the desired properties is clear by the definition of $\cP_+(\cdot)$.
\end{proof}

Finally we define how to integrate a general motivic constructible function over one valued field variable, in families.

\begin{def-lem}\label{integrableC}
Let $\varphi$ be in $\cC_+(Z)$ and suppose that $Z=X[1,0,0]$. Say that $\varphi$  is $X$-integrable if there exists $\psi$ in $\cP_+(Z[0,m,0])$ with $\mu_{/Z}(\psi)=\varphi$ as in Lemma \ref{lift} such that $\psi$ is $X[0,m,0]$-integrable in the sense of Lemma-Definition \ref{step-int} and then
$$
\mu_{/X}(\varphi):= \mu_{/X} (\mu_{{/X}[0,m,0]} (\psi))\ \in \cC_+(X)
$$
is independent of the choices and is called the integral of $\varphi$ in the fibers of $Z\to X$.
\end{def-lem}
The proof of \ref{integrableC} is similar to the proofs in section 9 of \cite{CL}. We give a detailed outline for the convenience of the reader.
\begin{proof}

For any alternative $\psi'$ in $\cP_+(Z[0,m',0])$, there exists another alternative $\psi''$ in $\cP_+(Z[0,m'',0])$ such that moreover $\psi''$ is a common refinement of $\psi$ and $\psi'$ meaning that $p_!(\psi'')=\psi$ and $p_!'(\psi'')=\psi'$ where $p:Z[0,m'',0]\to Z[0,m,0]$ and $p':Z[0,m'',0]\to Z[0,m',0]$ are coordinate projections.

Hence, it is enough to consider the case that $\psi'$ is a refinement of $\psi$ in the sense that $p_!(\psi')=\psi$ with $p:Z[0,m',0]\to Z[0,m,0]$ the projection and $m'\geq m$, and to compare
$\mu_{/X} (\mu_{{/X}[0,m,0]} (\psi))$ with $\mu_{/X} (\mu_{{/X}[0,m',0]} (\psi'))$. Hence, we may moreover suppose that $m=0$ and show that
\begin{equation}
\mu_{/X} (\psi) = \mu_{/X} (\mu_{/X[0,m',0]} (\psi')),
\end{equation}
where the left hand side is as in Lemma-Definition \ref{step-int}.
 Replacing $X$ by $X[0,0,1]$ and adapting the data correspondingly, we may suppose that $\theta_{q,K}(\psi)(x,\cdot)$ is constant on a single ball $B_{x}$ and zero outside $B_x$ for each $q>1$, $K$, and $x\in X(K)$, where $B_x$ depends definably on $x$.
By finite $b$-minimality (and compactness)
we may suppose that there are an integer $N>0$ and a definable morphism $c:X[0,m',0]\to h[1,0,0]$
such that each ball in the collection of balls of $\theta_{q,K}(\psi')(x',\cdot)$ is either of the form
\begin{equation}\label{des1}
\{t\in K\mid \ord(t-c(x'))\geq z\}
\end{equation}
or
of the form
\begin{equation}\label{des2}
\{t\in K\mid \ac_n(t-c(x'))=\xi,\ \ord (t-c(x')) = z\}
\end{equation}
for some $z\in \ZZ$, $\xi\in (\cO_K/\cM_K^n)^\times$, and some $n<N$.
By Lemma \ref{finiteb}, the image $I_K(x)$ of $c_K(x,\cdot):h[0,m',0](K)\to K$ is a finite subset of $K$ for each $\cT$-field $K$ and each $x\in X(K)$, and is even uniformly bounded in size when $K$ varies (by compactness).
We may suppose that $I_K(x)$ has precisely $k$ elements, for some $k>1$ which is independent of $K$ and $x$.  We use induction on $k$.
If $k=1$, then there are two cases: either one is done by a geometric power series calculation or by Relations (\ref{eq1}) and (\ref{eq3}) of section \ref{rdbis}, see Examples 9.1.4 and 9.1.9 of \cite{CL}. Next consider $k>1$. By a geometric power series calculation as in Example 9.1.9 of \cite{CL}, we may suppose that, for $z$ as in (\ref{des2}), one has $z\leq \alpha(x)$ for some definable morphism $\alpha:X\to\ZZ$.
For each element $d$ of $I_K(x)$, let $d'$ be the average of $d$ and the elements different from $d$ that lie closest to $d$. Write $c'$ for the definable morphism that takes the values $d'$ instead of $d$. Recall that $e$ stands for the ramification degree of the $(0,p,e)$-fields we consider. Now we can change the description of the balls of $\theta_{q,K}(\psi')(x',\cdot)$ using $c'$ instead of $c$ and $n<N +N'$  as in (\ref{des1}) and (\ref{des2}), which is possible for $N'$ big enough, where big enough depends on $k$, $p$, and $e$ only. We are done by induction on $k$.
\end{proof}

\section{General integration}\label{genint}

In this section we define the motivic measure and the motivic integral of motivic constructible functions in general. For uniformity results and for applications it is important that we do this in families, namely, in the fibers of projections $X[n,m,r]\to X$ for $X$ in $\Def$.
 We define the integrals in the fibers of a general coordinate projection $X[n,m,r]\to X$ by induction on $n\geq 0$.
\begin{def-lem}\label{generalint}
Let $\varphi$ be in $\cC_+(Z)$ and suppose that $Z=X[n,m,r]$ for some $X$ in $\Def$. Say that $\varphi$  is $X$-integrable if there exist a definable subassignment $Z'\subset Z$ whose complement in $Z$ has relative dimension $<n$ over $X$, and an ordering of the coordinates on $X[n,m,r]$ such that $\varphi':=\11_{Z'}\varphi$ is $X[n-1,m,r]$-integrable and $\mu_{/X[n-1,m,r]}(\varphi')$ is $X$-integrable. If this holds then
$$
\mu_{{/X}}(\varphi):= \mu_{/X} (\mu_{/X[n-1,m,r]} (\varphi'))\ \in \cC_+(X)
$$
does not depend on the choices and is called the integral of $\varphi$ in the fibers of $Z\to X$, and is compatible with the definitions made in \ref{integrableC}.

\par
More generally, let $\varphi$ be in $\cC_+(Z)$ and suppose that $Z\subset X[n,m,r]$. Say that $\varphi$ is $X$-integrable if the extension by zero of $\varphi$ to a function $\widetilde\varphi$ in $\cC_+(X[n,m,r])$ is $X$-integrable, and define $\mu_{{/X}}(\varphi)$ as $\mu_{{/X}}(\widetilde\varphi)$. If $X$ is $h[0,0,0]$  (which is a final object in $\Def$), then we write $\mu$ instead of $\mu_{/X}$, we say integrable instead of $X$-integrable, and $\mu(\varphi)$ is called the integral of $\varphi$ over $Z$.
\end{def-lem}
 One can prove \ref{generalint} in two ways (both relying on the properties of $\cT$-fields of Definition \ref{def:T}): using more recent insights from \cite{ccl} to reverse the order of the coordinates, or, using the approach from \cite{CL} with a calculation on bi-cells. We follow the slightly shorter approach from \cite{ccl}.
\begin{proof}
If $n\leq 1$ there is nothing to prove. We proceed by induction on $n$. By permuting the coordinates if necessary, it is sufficient to prove the case that $n=2$. We may suppose that $m=r=0$. Write $p_1$ and $p_2$ for consecutive coordinate projections $p_1: X[2,0,0] \to X[1,0,0]$ and $p_2: X[1,0,0] \to X$. We may suppose that there exists $Z'$ whose complement in $Z$ has dimension $<2$ and such that $\varphi':=\11_{Z'} \varphi$ is  $X[1,0,0]$-integrable (for $p_1$) and that $\mu_{/X[1,0,0]} (\varphi)$ is $X$-integrable (for $p_2$). Up to replacing $Z$ by $Z'$, we may suppose that $\varphi=\varphi'$ and $Z=Z'$. By replacing $X$ by some $X[0,m',r']$ and by Lemma \ref{lift}, we may suppose that $\varphi$ is the image of $\psi\in \cP_+(Z)$ under the natural map $\cP_+(Z)\to \cC_+(Z):a\mapsto a\otimes [Z]$. Moreover, by finite $b$-minimality and compactness, and again by replacing $X$ by some $X[0,m',r']$, we may suppose that above each point $x$ in $X(K)$ for each $K$,
 $$
\psi_K(x,\cdot,\cdot):Z_x(K)\to \AA:(t_1,t_2)\mapsto \psi_K(x,t_1,t_2)
 $$
  is constant, and that $Z_x(K)$ has the form
 $$
 \{(t_1,t_2)\in K^2\mid t_1\in B_x,\ t_2\in B_{x,t_1}\},
 $$
 where $B_x$ is a ball only depending on $x$, and $B_{x,t_1}$ is a ball of the form
 \begin{equation}\label{ball1}
 \{t_2\in K\mid \ac_n(t_2-c(x,t_1))=\xi,\ \ord (t_2-c(x,t_1)) = z\}
\end{equation}
for some $z\in \ZZ$, $\xi \in (\cO_K/\cM_K^n)^\times$, and some $n<N$. This way, we have pushed the integrability issue into a summation problem, which would be symmetric (and thus easy) if the role of $t_1$ and $t_2$ were symmetric. We will finish the proof by reversing (piecewise) the role of the coordinates $t_1$ and $t_2$, similarly as in \cite{ccl}, for which the desiderata are easy to check.
We refer to \cite{ccl} for full details.
By the Jacobian property we may suppose that $c(x,\cdot):t_1\mapsto c(x,t_1)$ has the Jacobian property on each ball $B_x$.  In a first case we may suppose that $c(x,\cdot)$ is constant. This case being symmetric in $t_1$ and $t_2$, we are done. In the second case we suppose that the image $C_x$ of $c(x,\cdot)$ is a ball which does not contain the ball $B_{x,t_1}$. By applying finite $b$-minimality to the graph of $c$, one reduces to the first case by using the newfound center instead of $c$ to rewrite (\ref{ball1}). In the third and final case we have that the image $C_x$ of $c(x,\cdot)$ is a ball which contains $B_{x,t_1}$. Taking the inverse of $c(x,\cdot)$ on $C_x$, we can reverse the order of $t_1$ and $t_2$ and we are done by a calculation using the chain rule for derivatives. 
\end{proof}

\par
One of the main features is a natural relation between motivic integrability and motivic integration on the one hand, and classical measure theoretic integrability and integration on local fields on the other hand:

\begin{prop}\label{intmotK}
Let $\varphi$ be in $\cC_+(X[n,m,r])$ for some $X$ in $\Def$.
If $\varphi$ is $X$-integrable, then, for each local field $K$ which is a  $\cT$-field and for each $x\in X(K)$ one has that $\varphi_K(x,\cdot)$ is integrable (in the standard measure-theoretic sense). If one further writes $\psi$ for $\mu_{/X}(\varphi)$, then, for each $x\in X(K)$,
$$
\psi_K (x) = \int_{y}  \varphi_K(x,y),
$$
where the integral is against the product measure of the Haar measure on $K$ with the counting measure on $\ZZ$ and on the residue rings for $y$ running over $h[n,m,r](K)$, and where the Haar measure gives $\cO_K$ measure one.
\end{prop}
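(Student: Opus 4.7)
The plan is to prove the statement by induction on the structure built into the definition of $\mu_{/X}$ in Sections \ref{piint}--\ref{genint}, checking at each level that the motivic operation specializes under $(\cdot)_K$ to the corresponding classical one. Since $K$ is a local field, both the Haar measure of balls and the counting measures are finite-valued on the pieces that appear, so the classical iterated integrals are well-defined by Fubini--Tonelli for nonnegative measurable functions. It is therefore enough to verify the matching at each layer; the fact that different motivic presentations yield the same value of $\mu_{/X}(\varphi)$ then automatically forces them to yield the same classical integral.

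First I would handle the two ``easy'' layers. For summation over value group variables, the defining equation (\ref{kjh}) of Theorem \ref{thm:kjh}, evaluated at $q=q_K$, is exactly the classical sum against counting measure on $\ZZ^r$, so the statement holds when $\varphi$ is of the type handled by $\mu_{/S}$ in Section \ref{piint}. For summation over residue ring variables, the identification $(\mu_{/X}[Y])_K(x) = \#\{y : (x,y)\in Y(K)\}$ is precisely the content of Lemma \ref{nonarchsum}, matching the counting measure on $\cO_K/\cM_K^{m_i}$. These two combine, via Proposition \ref{peprod0} and Lemma-Definition \ref{def:aux}, to give the case of integration over one fiber of $X[0,m,r]\to X$.

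Next I would treat the key one-valued-field-variable layer of Lemma-Definition \ref{integrableC}. By Lemma \ref{lift} we may write $\varphi = \mu_{/Z}(\psi)$ with $\psi\in\cP_+(Z[0,m,0])$, so after integrating out the residue ring variables by the previous step we are reduced to the family of step functions of Lemma-Definition \ref{step-int}. Here the point is that the definition (\ref{q-in}) of the $q$-integral, $\sum_{B\in S}\theta_q(B)\varphi(B)$, is by construction the Haar-measure integral of the step function $\theta_{q,K}(\varphi)(x,\cdot)$ over $K$, since $\theta_q(B) = q^{-\ord b}$ is exactly the Haar measure of a ball $B = a + b\cO_K$ when Haar measure gives $\cO_K$ mass $1$. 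Putting the valued field integration together with the preceding residue-ring/value-group integration yields the statement on $X[1,0,0]$.

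Finally I would run the induction on $n$ using Lemma-Definition \ref{generalint}. Given $X$-integrable $\varphi\in\cC_+(X[n,m,r])$, pick $Z'$ and an ordering of coordinates as in the definition, so that $\varphi' = \11_{Z'}\varphi$ is $X[n-1,m,r]$-integrable with $\mu_{/X[n-1,m,r]}(\varphi')$ being $X$-integrable. Since $Z\setminus Z'$ has relative dimension $<n$ over $X$, its specialization $(Z\setminus Z')(K)$ has Haar measure zero in $K^n$ (by the dimension theory from \cite{CLb} recalled in Section \ref{defsub}), so $\varphi_K(x,\cdot)$ and $\varphi'_K(x,\cdot)$ have the same classical integral. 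The inductive hypothesis applied first to $\varphi'$ on $X[n-1,m,r]$ and then to $\mu_{/X[n-1,m,r]}(\varphi')$ on $X$, combined with classical Fubini--Tonelli to swap the iterated integral for the joint integral over $h[n,m,r](K)$, gives the result. The main subtlety to verify is precisely that a Haar-measure-zero set can be discarded at each step, which is what justifies the freedom to remove $Z\setminus Z'$ in the inductive definition and confirms that the choice-dependent motivic definition matches the intrinsic classical integral.
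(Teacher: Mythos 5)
Your proof is correct and takes essentially the same approach as the paper, which simply observes that the $\theta_{q}$-formalism with $q=q_K$ matches the $p$-adic counting/Haar interpretation of Section \ref{nonarch}; you have spelled out that one-line observation layer by layer through the inductive construction of $\mu_{/X}$ (value group, residue rings, one valued-field variable, then induction on $n$), including the point that the discarded set $Z\setminus Z'$ of relative dimension $<n$ has Haar measure zero so that the choice-dependent motivic recipe specializes to the intrinsic classical integral.
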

\begin{proof}
This follows from the matching of $q$ and the $\theta_q$-notions with $q_K$, the number of elements in the residue field of the local field $K$, see Section \ref{nonarch}.
\end{proof}

\subsection*{}
As an application of our framework let us explain more precisely the statement about $Z_{\mathrm{mot}} (T)$ alluded to in the introduction.
Let $K$ be a finite field extension of $\QQ_p$ with residue field $\FF_q$ and ramification degree $e$, and let $K_d$ denote its unique unramified extension of degree $d$, for $d \geq 1$. Denote by
$\cO_d$ the ring of integers of $K_d$.
We work with the language $\cL = \cL_{\rm high}(K)$ and the theory $\cT=\cT_{(0,p,e)}(K)$ as in example \ref{sas} of Section \ref{exs}.
Write $O$ for the definable subassignment of $h[1,0,0]$ given by the condition $\ord (x)\geq 0\vee x=0$. Thus, for any $\cT$-field $L$, one has $O(L)=\cO_L$.
Let $H$ be a definable morphism from $O^n$ to $O$. For example, $H$ could be given by a polynomial in $\cO_1 [x_1, \cdots, x_n]$.
For each $d\geq 1$, we consider
$$
Z_d (s) = \int_{\cO_d^n} \vert H_{K_d} (x) \vert_d^s \vert dx \vert_d,
$$
where $H_{K_d}$ is the interpretation of $H$ in $K_d$ as in Section \ref{defsub}. Igusa and Denef showed that knowing the local zeta function
$Z_d (s)$
is equivalent to knowing the series
$$
Z'_{d} (T) := \sum_{{i\in\NN} } \Vol_d(\{x\in \cO_d^n\mid  \ord (H(x) ) = i  )   T^i,
$$
by giving an explicit formula transforming $Z_d$ in $Z_d'$, cf. \cite{D84}, where $\vert dx \vert_d$ and $\Vol_d$ both stand for the Haar measure on $K_d^n$ giving $\cO_d^n$ measure $1$.
For a motivic analogue, we let, for each $i\geq 0$,  $X_i$ be the definable subassignment of $O^n$ given by the condition $\ord (H(x)) = i$.
One of the natural objects which can be compared to Igusa's local zeta functions $Z_d(s)$ mentioned in the introduction, is given by the series
$$
Z_{\mathrm{mot}} (T) := \sum_{\in{i\in\NN} } \mu(X_i)  T^i.
$$
Similarly as in Theorems 4.4.1 and 5.7.1 of \cite{CL}, $Z_{\mathrm{mot}} (T)$
is rational with a denominator which is a finite product of factors of the form $1-\LL^\alpha T^\beta$ for integers $\beta>0$ and $\alpha$.
Write $\ast$ for the point $h[0,0,0]$.
Note that $\cC(\ast)$ is nothing else than $\cQ(\ast)$ with $\LL$ and the $1-\LL^i$ for $i<0$ inverted.
For each integer $d \geq 1$ and each definable subset $A$ of $h[0,k,0]$, where $k=(k_\ell)_\ell$ is a finite tuple of non-negative integers, the number of elements on $A(K_d)$ is finite. Indeed, $A(K_d)$ is a subset of $\prod_\ell \cO_d/\cM_d^{k_\ell}$, where $\cM_d$ is the maximal ideal of $\cO_d$.
For each integer $d \geq 1$  there is a unique ring morphism $N_d :  \cC(\ast) \to \QQ$
sending the class $[A]$ of a definable subset $A$ of $h[0,k,0]$, where $k$ is a tuple, to the number of elements of $A(K_d)$.

With this notation, the following comparison result between the tower of local zeta functions $Z'_d(T)$ for $d\geq 1$ and its motivic counterpart $Z_{\mathrm{mot}} (T)$ generalize results of \cite{Meus} and \cite{Past}.

\begin{prop}\label{Meuser}
For every $d \geq 1$ one has
$$
Z'_d (T) = N_d  (Z_{\mathrm{mot}} (T)),
$$
where $N_d (Z_{\mathrm{mot}} (T))$ is obtained by evaluating $N_d$ on the coefficients of the numerator and denominator of $Z_{\mathrm{mot}}(T)$.
\end{prop}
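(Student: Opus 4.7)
The plan is to reduce the proposition to a coefficient-by-coefficient comparison between $Z'_d(T)$ and the power series expansion of $Z_{\mathrm{mot}}(T)$, which in turn is a direct application of the motivic-to-$p$-adic specialization Proposition \ref{intmotK}, and then to transport this coefficient-wise equality to the level of rational functions by using that $N_d$ is a ring morphism.

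First, I would observe that the characteristic function $\11_{X_i}\in \cC_+(O^n)$ is $\ast$-integrable (over the final object $\ast=h[0,0,0]$), with integral $\mu(X_i)\in\cC(\ast)$. Applying Proposition \ref{intmotK} with $X=\ast$ to $\varphi = \11_{X_i}$ and to the local field $K_d$ (which is a $\cT$-field over $K$ since it is a finite unramified extension, hence still of residue characteristic $p$ and ramification degree $e$), I obtain
\[
\mu(X_i)_{K_d} \;=\; \int_{\cO_d^n} (\11_{X_i})_{K_d}(x)\,|dx|_d \;=\; \Vol_d\bigl(\{x\in\cO_d^n\mid \ord H_{K_d}(x)=i\}\bigr),
\]
where $\mu(X_i)_{K_d}$ denotes the specialization at $K_d$ as described in Section \ref{nonarch}. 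Next I would verify that this specialization agrees with the morphism $N_d$: indeed both send $\LL$ to $q^d = \#(\cO_d/\cM_d) = q_{K_d}$ and both send the class $[A]$ of a definable subassignment $A\subset h[0,k,0]$ to $\#A(K_d)$, so by the defining relations of $\cQ_+(\ast)$ and the definition of $\cC(\ast)$ the two specialization maps coincide on all of $\cC(\ast)$. Hence $\Vol_d(X_i(K_d)) = N_d(\mu(X_i))$ for every $i\ge 0$.

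Finally I would bootstrap to rational functions. By the rationality statement invoked just before the proposition, write $Z_{\mathrm{mot}}(T) = P(T)/Q(T)$ with $P(T)\in\cC(\ast)[T]$ and $Q(T)=\prod_k(1-\LL^{\alpha_k}T^{\beta_k})$, $\beta_k>0$. Expanding $Z_{\mathrm{mot}}(T) = \sum_{i\ge0}\mu(X_i)T^i$ in power series and multiplying by $Q(T)$ gives the identity
\[
P(T) \;=\; Q(T)\sum_{i\ge0}\mu(X_i)T^i
\]
in $\cC(\ast)\llb T\rrb$. Applying $N_d$ coefficient-wise, which is legitimate since $N_d$ is a ring morphism into $\QQ$ and each coefficient of $P(T)$ and of the product on the right is a fixed finite $\cC(\ast)$-linear expression in finitely many $\mu(X_j)$, I get
\[
N_d(P(T)) \;=\; N_d(Q(T))\sum_{i\ge0} N_d(\mu(X_i))T^i \;=\; N_d(Q(T))\,Z'_d(T),
\]
using the coefficient-wise equality from the previous paragraph. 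Dividing by the polynomial $N_d(Q(T))\in\QQ[T]$ (whose constant term is $1$, so it is invertible in $\QQ\llb T\rrb$) yields exactly $Z'_d(T) = N_d(P(T))/N_d(Q(T)) = N_d(Z_{\mathrm{mot}}(T))$.

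The only real obstacle is the identification of the specialization map of Section \ref{nonarch} with the morphism $N_d$ on all of $\cC(\ast)$; this is however essentially bookkeeping, since both maps are ring morphisms determined by their values on classes $[A]$ of definable subsets of $h[0,k,0]$ and on $\LL$, where they manifestly coincide. Everything else is either a direct quote from Proposition \ref{intmotK} or a formal manipulation using that $N_d$ is a ring morphism and that the denominator $Q(T)$ has invertible image in $\QQ\llb T\rrb$.
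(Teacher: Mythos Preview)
Your proof is correct and follows the same approach as the paper, which simply states that the proposition is ``Immediate from Proposition \ref{intmotK}.'' You have carefully spelled out the details that the paper leaves implicit: the coefficient-wise specialization via Proposition \ref{intmotK}, the identification of the specialization map with $N_d$, and the passage from power series to rational functions.
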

\begin{proof}
Immediate from Proposition \ref{intmotK}.
\end{proof}

\section{Further properties}\label{fp}

As mentioned before, the projection formula allows one to pull a factor out of the integral if that factor depends on other variables than the ones that one integrates over.
\begin{prop}[Projection formula]\label{proj}
Let $\varphi$ be in $\cC_+(Z)$ for some $Z\subset X[n,m,r]$ and some $X$ in $\Def$.
Suppose that $\varphi$ is $X$-integrable, let $\psi$ be in $\cC_+(X)$ and let $p:Z\to X$ be the projection. Then $p^*(\psi)\varphi$ is $X$-integrable and
$$
\mu_{/X}(p^*(\psi)\varphi) = \psi \mu_{/X}(\varphi)
$$
holds in $\cC_+(X)$.
\end{prop}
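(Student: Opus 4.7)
The plan is to proceed by induction on the number $n$ of valued-field variables over which we integrate. Extending $\varphi$ by zero to $\widetilde\varphi\in\cC_+(X[n,m,r])$ reduces us to the case $Z=X[n,m,r]$; the base case $n=0$ is exactly Lemma \ref{projaux}. The heart of the argument is the case $n=1$, after which the general case follows by iterating the $n=1$ case and the inductive hypothesis through the step-by-step definition of Lemma-Definition \ref{generalint}.

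For $n=1$ with $Z=X[1,0,0]$, I would build an explicit Presburger lift of $p^*(\psi)\varphi$. By Lemma-Definition \ref{integrableC} choose $\psi'\in\cP_+(Z[0,m,0])$ with $\mu_{/Z}(\psi')=\varphi$ such that $\psi'$ is $X[0,m,0]$-integrable as a family of step-functions, and by Lemma \ref{lift} choose $\tilde\psi\in\cP_+(X[0,m'',0])$ with $\mu_{/X}(\tilde\psi)=\psi$. Let $\alpha:X[1,m+m'',0]\to X[1,m,0]$ and $\beta:X[1,m+m'',0]\to X[0,m'',0]$ be the obvious coordinate projections, and set
$$
h:=\alpha^*(\psi')\,\beta^*(\tilde\psi)\in\cP_+(X[1,m+m'',0]).
$$
Two applications of Lemma \ref{projaux} suffice to integrate the $m+m''$ residue-ring variables out of $h$: first $\xi''$, using that $\alpha^*(\psi')$ is independent of $\xi''$; then $\xi'$, pulling out the resulting pullback of $\psi$. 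This yields $\mu_{/Z}(h)=p^*(\psi)\varphi$. Because $\tilde\psi$ is independent of the valued-field coordinate $t$, each fiber $t\mapsto h(x,t,\xi',\xi'')$ is the step-function $\psi'(x,\cdot,\xi')$ rescaled by the nonnegative constant $\tilde\psi(x,\xi'')$, so $h$ is $X[0,m+m'',0]$-integrable as a family of step-functions by the linearity inherent in Lemma-Definition \ref{step-int}. By the criterion of \ref{integrableC} this makes $p^*(\psi)\varphi$ $X$-integrable, and a final use of Lemma \ref{projaux} along the projection $X[0,m+m'',0]\to X$ gives $\mu_{/X}(p^*(\psi)\varphi)=\psi\,\mu_{/X}(\varphi)$.

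For $n\geq 2$, I would apply Lemma-Definition \ref{generalint} to choose a subassignment $Z'\subset X[n,m,r]$ whose complement has relative dimension $<n$ over $X$, such that $\varphi':=\11_{Z'}\widetilde\varphi$ is $X[n-1,m,r]$-integrable and $\mu_{/X[n-1,m,r]}(\varphi')$ is $X$-integrable. Decomposing $p=p_1\circ q$ with $q:X[n,m,r]\to X[n-1,m,r]$ the projection on the last valued-field coordinate, the $n=1$ case applied to $q$ (with the function $p_1^*(\psi)$ as the factor to pull out) gives that $p^*(\psi)\varphi'=q^*(p_1^*(\psi))\,\varphi'$ is $X[n-1,m,r]$-integrable with integral $p_1^*(\psi)\,\mu_{/X[n-1,m,r]}(\varphi')$; the inductive hypothesis for $n-1$ variables then both produces the $X$-integrability of $p^*(\psi)\widetilde\varphi$ via the same $Z'$ and extracts $\psi$ from the outer integration, yielding the desired identity.

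The main obstacle I expect is the $n=1$ case, specifically verifying that the lift $h$ satisfies both clauses of \ref{integrableC}. This hinges on choosing $\tilde\psi$ in residue-ring variables \emph{disjoint} both from the valued-field coordinate and from those already used by $\psi'$, so that the product $\alpha^*(\psi')\cdot\beta^*(\tilde\psi)$ respects the tensorial structure of Proposition \ref{peprod0} (which is what Lemma \ref{projaux} requires) and preserves the step-function nature in $t$. Once this bookkeeping is arranged, every remaining identity reduces to linearity of Presburger summation (Theorem-Definition \ref{thm:kjh}) or of step-function integration (Lemma-Definition \ref{step-int}).
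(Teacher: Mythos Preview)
Your proposal is correct and follows essentially the same route as the paper, whose proof is the single line ``Clear by the explicit definitions of $X$-integrals and Lemma \ref{projaux}''; you have supplied the detailed unpacking that the paper omits. One minor presentational point: your inductive step labeled ``For $n\geq 2$'' in fact works verbatim for all $n\geq 1$ (with base case $n=0$ being Lemma \ref{projaux}), so your separate ``$n=1$ with $Z=X[1,0,0]$'' paragraph is really the key single-valued-field-variable lemma used \emph{inside} the induction, not the $n=1$ instance of the statement itself (which may have $m,r>0$).
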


In other words, if one would write ${\rm I}_X\cC_+(Z)$ for the $X$-integrable functions in $\cC_+(Z)$, then
$$
\mu_{/X}:{\rm I}_X\cC_+(Z) \to \cC_+(Z): \varphi \mapsto \mu_{/X}(\varphi)
$$
is a morphism of $\cC_+(X)$-semi-modules, where the semi-module structure on ${\rm I}_X \cC_+(Z)$ comes from the homomorphism $p^*: \cC_+(X) \to \cC_+(Z)$ of semi-rings, with $p:Z\to X$ the projection.

\begin{proof}[Proof of Proposition \ref{proj}]
Clear by the explicit definitions of $X$-integrals and Lemma \ref{projaux}.
\end{proof}

We will now fix our terminology concerning Jacobians and relative Jacobians, first in a general, set-theoretic setting, and then for definable morphisms.
%


For any function $h:A\subset K^n\to K^n$ (in the set-theoretic sense of function) for some $\cT$-field $K$ and integer $n> 0$,  let $\Jac h:A\to K$ be the determinant of the Jacobian matrix of $h$ where this matrix is well-defined (on the interior of $A$) and let $\Jac h$ take the value $0$ elsewhere in $A$.

In the relative case, consider a function $f:A\subset C\times K^n \to C\times K^n $ which makes a commutative diagram with the projections to $C$, with $K$ a $\cT$-field and with some set $C$. Write $\Jac_{/C} f:A\to K$ for the function satisfying for each $c\in C$ that $(\Jac_{/C}f)(c,z) = \Jac (f_c)(z) $ for each $c\in C$ and each $z\in K^n$ with $(c,z)\in A$, and where $f_c:A_c\to K^n$ is the function sending $z$ to $t$ with $f(c,z) = (c,t)$ and $(c,z)\in A$.

The existence of the relative Jacobian $\Jac g_{/X}$ in the following definable context is clear by the definability of the partial derivatives and piecewise continuity properties of definable functions.
\begin{def-lem}
Consider a definable morphism $g:A\subset X[n,0,0]\to X[n,0,0]$ over $X$ for some definable subassignment $X$. By $\Jac_{/X} g$ denote the unique definable morphism $A \to h[1,0,0]$ satisfying for each $\cT$-field $K$ that $(\Jac_{/X} g)_K= \Jac_{/X_K} (g_K)$ and call it the relative Jacobian of $g$ over $X$.
\end{def-lem}

We can now formulate the change of variables formula, in a relative setting.

\begin{theorem}[Change of variables]\label{cov}
Let $F:Z \subset X[n,0,0]  \to Z'\subset X[n,0,0]$ be a definable isomorphism over $X$ for some $X$ in $\Def$ and let $\varphi$ be in $\cC_+(Z)$.
Then there exists a definable subassignment $Y\subset Z$ whose complement in $Z$ has dimension $<n$ over $X$, and such that the relative Jacobian $ \Jac_{/X} F $ of $F$ over $X$ is nonvanishing on $Y$. Moreover, if we take the unique $\varphi'$ in $\cC_+(Z')$ with  $F^*(\varphi') = \varphi$, then $\varphi\LL^{-\ordjac_{/X} F}$ is $X$-integrable if and only if $\varphi'$ is $X$-integrable, and then
$$
\mu_{/X}(\varphi\LL^{-\ordjac_{/X} F}) = \mu_{/X}(\varphi')
$$
in $\cC_+(X)$, with the convention that $\LL^{- \ord (0)}=0$.
\end{theorem}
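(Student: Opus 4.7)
The first claim, that the vanishing locus of $\Jac_{/X} F$ has relative dimension $<n$ over $X$, reduces to a pointwise/fiberwise statement: for every $\cT$-field $K$ and every $x\in X(K)$, the set of $z\in Z_x(K)$ where the classical Jacobian of the bijection $F_{K,x}:Z_x(K)\to Z'_x(K)$ vanishes has dimension $<n$. This is a standard consequence of finite $b$-minimality together with the implicit function theorem applied to a definable bijection between subsets of $K^n$: if $\Jac F_{K,x}$ vanished on a set of dimension $n$ (that is, containing a polydisc), then $F_{K,x}$ would fail to be injective there. Using model-theoretic compactness and the definability of partial derivatives this upgrades to a uniform $Y\subset Z$ with the claimed property.

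To prove the integral identity, the plan is to induct on $n$. The base case $n=0$ is trivial, since then $F$ is an isomorphism of definable subassignments of $X$, the relative Jacobian is empty/the determinant of the $0\times 0$ matrix, and the formula reduces to invariance of $\mu_{/X}$ under relabeling along $F$; this follows directly from Lemma-Definition \ref{def:aux}, \ref{step-int} and \ref{integrableC} combined with the relations defining $\cQ_+$ and $\cP_+$. For the inductive step, one factors the iterated integral over $X[n,0,0]$ as an integral over $X[n-1,0,0]$ followed by one over $X[1,0,0]$, as in Lemma-Definition \ref{generalint}. Using the Jacobian property of $\cT$ applied to the definable morphism $F$, partitioned over auxiliary parameters, one cuts $Y$ into finitely many relative cells on which $F$, coordinate by coordinate, is either locally constant in a given variable or has the $n$-Jacobian property in that variable; piecewise triangulating $F$ by successively adjusting coordinates and invoking the chain rule $\Jac_{/X} F = \Jac_{/X[n-1,0,0]} F_n\cdot \Jac_{/X}(\text{rest})$ reduces everything to the case $n=1$.

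The heart of the argument, and the step I expect to be the main obstacle, is the case $n=1$. Here I would use the Jacobian property for $\cT$ together with finite $b$-minimality to find an $X$-definable map $f:Z\to S$ to an auxiliary set $S$ whose nonempty fibers are balls $B_s\subset K$ (for each $\cT$-field $K$ and each $s\in S(K)$) on which $F$ is either constant or has the $n$-Jacobian property; the constant fibers contribute to a set of relative dimension $0$ and can be discarded. On a ball $B$ with the $n$-Jacobian property, conditions (iii)--(vi) of Definition \ref{jacf} force $F(B)$ to be a ball of $q$-volume $\theta_q(B)\cdot q^{-\ord F'}$, and $F$ maps subballs of fixed radius in $B$ bijectively onto subballs of the corresponding scaled radius in $F(B)$. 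Lifting $\varphi$ to a function $\psi\in \cP_+(Z[0,m,0])$ as in Lemma \ref{lift} and integrating via Lemma-Definition \ref{step-int} fiberwise over $S$, the identity $\mu_{/X}(\psi \LL^{-\ord F'}) = \mu_{/X}(F_*\psi)$ on each ball becomes the tautology that summing the constant values of a step function against $\theta_q(B_s)\cdot q^{-\ord F'(s)}$ agrees with summing them against $\theta_q(F(B_s))$. The main delicacy is matching the integrability conditions: one must check that $\varphi\LL^{-\ordjac_{/X} F}$ is $X$-integrable exactly when $\varphi'$ is, which amounts to observing that on each ball the substitution preserves absolute summability of the resulting step function families.

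Combining the case $n=1$ with the inductive step and the projection formula of Proposition \ref{proj} (used to pull out factors depending only on the remaining coordinates), one obtains the full formula. The convention $\LL^{-\ord(0)}=0$ is precisely what is needed so that the negligible subassignment $Z\setminus Y$ contributes zero to the left-hand side, consistent with the relative dimension bound used to define the integrals via \ref{generalint}.
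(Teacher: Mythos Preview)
Your proposal is correct and takes essentially the same approach as the paper: the case $n=1$ is deduced from the Jacobian property (via the ball-to-ball volume scaling you describe), and the general case is handled by factoring $F$ piecewise into a composition of definable morphisms each changing a single valued field coordinate, then applying the chain rule. The paper's own proof is much terser than yours---it defers the details of the $n=1$ case to \cite{CL}, Section~9.3 and of the factorization for $n>1$ to \cite{cln}---but your more explicit sketch matches those referenced arguments.
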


\begin{proof}
For $n=1$ this follows from the Jacobian property. Piecewise, the case of $n=1$ can be used to write $F$ (piecewise) as a finite composition of definable morphisms $F_i$, where each $F_i$ only performs a change of variables in one valued field coordinate. One finishes by the chain rule for derivation. For a detailed argument of this kind we refer to \cite{CL}, Section 9.3 for $n=1$, and \cite{cln}, proof of Theorem 6.2.2 for $n>1$.
\end{proof}

Finally we formulate a general Fubini Theorem, in the Tonelli variant for non-negatively valued functions.

\begin{theorem}[Fubini-Tonelli]\label{FTon}
Let $\varphi$ be in $\cC_+(Z)$ for some $Z\subset X[n,m,r]$ and some $X$ in $\Def$.
Let $X[n,m,r]\to X[n-n',m-m',r-r']$ be a coordinate projection. Then $\varphi$ is $X$-integrable if and only if there exists a definable subassignment $Y$ of $Z$ whose complement in $Z$ has dimension $<n$ over $X$ such that, if we put $\varphi'=\11_Y\varphi$, then $\varphi'$ is $X[n-n',m-m',r-r']$-integrable and $\mu_{/X[n-n',m-m',r-r']}(\varphi')$ is $X$-integrable.
If this holds, then
$$
\mu_{/X} ( \mu_{/X[n-n',m-m',r-r']}(\varphi') ) = \mu_{/X}(\varphi)
$$
in $\cC_+(X)$.
\end{theorem}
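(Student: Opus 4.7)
The plan is to factor the coordinate projection $X[n,m,r] \to X[n-n',m-m',r-r']$ as a composition of $n'+m'+r'$ single-coordinate projections and prove Fubini-Tonelli by induction on the number of coordinates dropped. For the inductive step it will suffice to verify the statement for a projection dropping exactly one coordinate; the exceptional $<n$-dimensional subassignment $Y$ in the statement can be chosen compatibly at each step, since a finite intersection of definable subassignments whose complements have relative dimension $<n$ over $X$ still has the same property, and a function supported in relative dimension $<n$ over a valued-field coordinate integrates to zero along that coordinate by the very definition in Lemma-Definition \ref{generalint}.

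For a projection dropping a single value-group or residue-ring coordinate, I would invoke splitness via Proposition \ref{peprod0} to write $\varphi = \sum_i a_i \otimes b_i$ with $a_i \in \cP_+$ and $b_i \in \cQ_+$, and reduce to checking Fubini on each factor separately. On the $\cQ_+$-side the formal summation of Section \ref{rdbis} makes Fubini tautological, since integrating a class $[W]$ in fibres amounts to forgetting a coordinate of the projection $W \to Z$, so the order is irrelevant. On the $\cP_+$-side it reduces to the classical Tonelli theorem for doubly-indexed summable families of non-negative real numbers, applied pointwise after $\theta_{q,K}$, and the uniqueness clause of Theorem \ref{thm:kjh} then identifies the two iterated constructible Presburger integrals. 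Compatibility with the tensor-product structure of $\cC_+$ follows from Lemma-Definition \ref{def:aux}.

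For a projection dropping one valued-field coordinate, the main ingredient is already contained in the proof of Lemma-Definition \ref{generalint}, which shows that if $\varphi \in \cC_+(X[n,0,0])$ is $X$-integrable then the order in which the $n$ valued-field coordinates are integrated out is irrelevant. Combined with the projection formula (Proposition \ref{proj}), this lets me single out the coordinate being dropped as the last one in the iteration and handle it via Lemma-Definition \ref{integrableC}. When residue-ring and value-group coordinates are also present, I would use Lemma \ref{lift} to rewrite $\varphi$ as the formal integral along auxiliary coordinates of a function in $\cP_+$ of an enlarged residue-ring/value-group block, converting Fubini between a valued-field variable and an auxiliary variable into Fubini between one $q$-integral of step-functions as in Lemma-Definition \ref{step-int} and one classical summation over $\ZZ$ or over a finite residue set, both controlled by $\theta_{q,K}$.

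The hardest part will be precisely this last commutation, because the definition of $X$-integrability in \ref{generalint} is asymmetric: it prescribes integrating out all valued-field coordinates before any auxiliary coordinate, so exchanging a valued-field integration with an auxiliary one needs a genuine argument rather than a formal consequence of the definition. I expect the argument to proceed by choosing the lift from Lemma \ref{lift} with sufficiently many extra auxiliary variables so that the centre, the $\ac_n$-value and the valuation describing the step-function structure of \ref{step-int} all become definable in a single formula uniformly across fibres, after which the desired commutation follows from classical Tonelli for non-negative doubly-indexed series applied to $\theta_{q,K}(\varphi)$. Iteration along the chosen factorization of the projection then yields both the equivalence of the two integrability notions and the equality of the iterated integral with the total integral.
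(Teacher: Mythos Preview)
Your plan is essentially the paper's, only spelled out in much greater detail. The paper's own proof is two sentences: the case $n=n'=0$ is ``clear by the definitions of $\mu_{/X}$ and the fact that $\cT$ is split'', and for $n>0$, $n'>0$ ``the essential case to prove is when $n=n'=1$, which is the same statement as Lemma-Definition~\ref{generalint}''. Your ingredients (splitness via Proposition~\ref{peprod0} and classical Tonelli on the $\cP_+$-side for auxiliary coordinates; the order-independence established in the proof of Lemma-Definition~\ref{generalint} for valued-field coordinates; lifting via Lemma~\ref{lift}) are exactly the ones the paper is invoking implicitly, and your induction on the number of dropped coordinates is the natural way to reduce to those one-step cases.

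One remark on the part you flag as ``hardest'', exchanging a valued-field integration with an auxiliary one: this commutation is less delicate than you suggest, because it is already absorbed into the definition of integration over one valued-field variable in Lemma-Definition~\ref{integrableC}. There one first lifts $\varphi$ to a $\psi\in\cP_+(Z[0,m,0])$, integrates the valued-field variable as a step-function family (Lemma-Definition~\ref{step-int}), and only then integrates the added auxiliary block; the proof of \ref{integrableC} shows the answer is independent of the lift. Hence, once you pass through Lemma~\ref{lift}, the exchange you need is just the commutation of a $q$-integral of step-functions (which by \ref{step-int} is rewritten as a sum over the value group) with a sum over auxiliary coordinates, and that is the classical Tonelli for doubly-indexed non-negative families applied after $\theta_{q,K}$, plus the uniqueness clause of Theorem~\ref{thm:kjh}. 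The paper does not isolate the case $n>0$, $n'=0$ at all, presumably because this commutation is regarded as part of the already-established machinery; your explicit treatment of it is correct and fills in what the paper leaves tacit.
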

\begin{proof}
For $n=n'=0$ this is clear by the definitions of $\mu_X$ and the fact that $\cT$ is split. For $n>0$ and $n'>0$ the essential case to prove is when $n=n'=1$, which is the same statement as Lemma-Definition \ref{generalint}.
\end{proof}

\section{Direct image formalism}\label{sec:imdir}

Let $\Lambda$ be in $\Def$. From now on, all objects will be over $\Lambda$, where we continue to use the notation $\star_{/\Lambda}$ instead of $\star\to\Lambda$ to denote that some object $\star$ is considered over $\Lambda$.

\par
Consider $X$ in $\Def_\Lambda$.
For each integer $d\geq 0$, let $\cC^{\leq d}_+(X_{/\Lambda})$ be the ideal of $\cC_+(X)$ generated by characteristic functions $\11_Z$ of $Z\subset X$ which have relative dimension $\leq d$ over $\Lambda$. Furthermore, we put $\cC^{\leq -1}_+(X_{/\Lambda})=\{0\}$.

\par
For $d\geq 0$, define $C_+^d(X_{/\Lambda})$ as the quotient of semi-groups $\cC^{\leq d}_+(X_{/\Lambda}) / \cC^{\leq d-1}_+(X_{/\Lambda})$; its nonzero elements can be seen as functions having support of dimension $d$ and which are defined almost everywhere, that is, up to definable subassignments of dimension $<d$.

\par
Finally, put
$$
C_+(X_{/\Lambda}) := \bigoplus_{d\geq 0} C_+^d(X_{/\Lambda}),
$$
which is actually a finite direct sum since $C_+^d(X_{/\Lambda})=\{0\}$ for $d$ larger than the relative dimension of $X$ over $\Lambda$.

We introduce a notion of isometries for definable subassignments. This is some work since also residue ring and integer variables play a role.
\begin{definition}[Isometries]\label{isom}
Consider $\overline \ZZ:=\ZZ\cup\{-\infty,+\infty\}$. Extend the natural order on $\ZZ $ to $\overline \ZZ$ so that $+\infty$ is the biggest element, and $-\infty$ the smallest.

Define $\overline \ord$ on $h[1,0,0]$ as the extension of $\ord$ by $\overline \ord(0)=+\infty$.
Define $\overline \ord$ on $h[0,m,r]$ by sending $0$ to $+\infty$ and everything else to $-\infty$.
Define $\overline \ord$ on $h[n,m,r]$ by sending $x=(x_i)_i$ to $\inf_i \overline\ord (x_i)$.

Call a definable isomorphism $f:Y\to Z$ between definable subassignments $Y$ and $Z$ an isometry if and only if
$$
\overline\ord (y - y') = \overline\ord (f_K(y) - f_K(y'))
$$
for all $\cT$-fields $K$ and all $y$ and $y'$ in $Y(K)$, where $y-y'=(y_i-y_i')_i$.
 In the relative setting, let $f:Y\to Z$ be a definable isomorphism over $\Lambda$. Call $f$ an isometry over $\Lambda$ if for all $\cT$-fields $K$ and for all $\lambda\in\Lambda(K)$, one has that $f_\lambda:Y_\lambda\to Z_\lambda$ is an isometry, where $Y_\lambda$ is the set of elements in $Y(K)$ that map to $\lambda$, and $f_\lambda$ is the restriction of $f_K$ to $Y_\lambda$.
\end{definition}

\begin{definition}[Adding parameters]\label{addpar}
Let $f:Y\to Z$ and $f':Y'\to Z'$ be  morphisms  in $\Def$ with $Y'\subset Y[0,m,r]$ and $Z'\subset Z[0,s,t]$ for some $m,r,s$, and $t$. Say that $f'$ is obtained from $f$ by adding parameters, if the natural projections
$
p:Y'\to Y
$
and
$
r: Z'\to Z
$
are definable isomorphisms and if moreover the composition $r \circ f'$ equals $f\circ p$.
\end{definition}
There exist many isometries by the following lemma.
\begin{lem}\label{lemisom}
Let $\Lambda$ be a definable subassignment, and let $X$ be a definable subassignment over $\Lambda$ of relative dimension $\leq d$ over $\Lambda$. Then there exists a definable morphism $f=f_2\circ f_1: Z\subset \Lambda[d,m,r]\to X$ over $\Lambda$ for some $m,r$, such that $f_2$ is an isometry over $\Lambda$, and $f_1$ is obtained from the identity function $X\to X$ by adding parameters.
\end{lem}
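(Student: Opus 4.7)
The plan is to combine an iterated cell decomposition (coming from finite $b$-minimality) with the Jacobian property. Suppose $X\subset \Lambda[n,m',r']$ with the structure morphism being the projection to $\Lambda$; the relative dimension hypothesis allows us to take $n\geq d$. The idea is to reparametrize the $n-d$ ``extra'' valued-field coordinates of $X$ by the first $d$ together with auxiliary (residue-ring and integer) parameters, in such a way that the reparametrization is an isometry over $\Lambda$; the reparametrization then factors as adding these auxiliary parameters to $X$ (giving $f_1$) followed by the isometric identification (giving $f_2$).

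For the cell decomposition step, apply finite $b$-minimality (Definition \ref{fb}) iteratively to the coordinates $x_{d+1},\ldots,x_n$, using $\Lambda$, the previously processed coordinates and auxiliary sorts as parameters. At each step either $x_i$ is given by a center function $c_i(\lambda, x_1,\ldots,x_{i-1}, \xi,\gamma)$ with a ball around it determined by a residue-ring parameter $\xi_i$ and an integer parameter $\gamma_i$, or $x_i$ is ``free''. By the relative dimension bound and after permuting coordinates, we may arrange that exactly $d$ coordinates remain free; this produces a definable bijection $\sigma: X\to Z'$ with $Z'\subset \Lambda[d,m,r]$ whose inverse reconstructs the dependent coordinates from the $d$ free ones and the cell parameters $(\xi,\gamma)$.

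To upgrade $\sigma^{-1}$ to an isometry, refine the cell decomposition using the Jacobian property (Definition \ref{jacf}). Within a single cell the parameters $(\xi,\gamma)$ are held fixed, so the isometry condition reduces to showing that $\ord(c_i(\lambda,x,\xi,\gamma) - c_i(\lambda,x',\xi,\gamma)) \geq \inf_j \ord(x_j-x'_j)$ for every $i>d$. The Jacobian property, applied to each $c_i$ viewed as a one-variable function (with the other variables and $(\lambda,\xi,\gamma)$ as parameters), yields constant order of the relevant partial derivative on a refinement of the cell; if this order is negative, condition (iv) of Definition \ref{jacf} lets us invert and interchange the free variable with the corresponding dependent coordinate, replacing $c_i$ by its inverse. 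After finitely many refinements and swaps, the final center functions all have partial derivatives of non-negative order, so the reparametrization is isometric on each cell. Between distinct cells the cell-labels $(\xi,\gamma)$ differ, producing $-\infty$ on both sides of the isometry equation.

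Finally, the factorization $f = f_2\circ f_1$ is obtained by viewing $Z'$ simultaneously as a subassignment of $X[0,m,r]$ (since the cell-labels $(\xi,\gamma)$ are definable functions of the $X$-point, the projection $Z'\to X$ is a definable isomorphism) and as a subassignment of $\Lambda[d,m,r]$ (via the cell decomposition). The first view realizes $f_1$ as obtained from $\mathrm{id}_X$ by adding the parameters $(\xi,\gamma)$ in the sense of Definition \ref{addpar}, while the second view makes $f_2$ the isometry over $\Lambda$ constructed above. The main obstacle is the isometry arrangement: without care the center functions depend on the free coordinates with partial derivatives of negative order, ruining the isometry. The Jacobian property is the essential tool, both for imposing uniform order control and for permitting the coordinate swaps required when control fails; finiteness of the procedure relies on finite $b$-minimality, while splitness of $\cT$ keeps all auxiliary bookkeeping within the residue-ring and integer sorts.
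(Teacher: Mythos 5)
Your proposal follows essentially the same route as the paper's proof: reduce to $X\subset\Lambda[d+n,a,b]$, use finite $b$-minimality to produce a cell decomposition with center functions and auxiliary parameters, apply the Jacobian property to control the orders of the (partial) derivatives of the centers, and on pieces where that order is negative use condition (iv) of Definition \ref{jacf} to invert and swap the roles of dependent and free valued-field coordinates; once all the relevant derivatives have order $\geq 0$, the same condition (iv) shows the resulting coordinate projection is an isometry (the paper's closing remark that the graph projection of an $F$ with $\ord(F')\geq 0$ is isometric), and the factorization $f=f_2\circ f_1$ then falls out. The one organizational difference is that the paper carries this out by downward induction, eliminating one valued-field coordinate at a time (``decrease $n$ by $1$''), which makes termination automatic, whereas you do a global cell decomposition followed by a refinement-and-swap pass, where the claim that ``finitely many refinements and swaps'' suffice is left a little informal; a cleaner way to see it is exactly the paper's one-coordinate-at-a-time recursion. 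Apart from that, the two arguments are the same.
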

\begin{proof}
We may suppose that $X$ is a definable subassignment of $\Lambda[d+n,a,b]$ for some $n,a,b$.
The lemma follows by a finite recursion process which allows one to decrease $n$ by $1$, by Definition \ref{def:T} and model theoretic compactness. Namely, if $n>0$, by the Jacobian property, compactness, and by adding extra parameters (using $b$-minimality) to control inverse functions on the piece where the derivative has order $<0 $, one can reduce to the case that $X$ is a definable subassignment of dimension $\leq d$ of $\Lambda[d+n-1,a,b]$ for some $a$ and $b$ by using a piecewise, isometric coordinate projection. To this end note that, for a function $F$ as in Definition \ref{jacf} with moreover $\ord (F') \geq 0$, the projection of the graph $\Gamma_F$ of $F$ onto the first coordinate is isometric on  $\Gamma_F$.
\end{proof}

We will now define the integrable functions (over $\Lambda$) inside $C_+(X_{/\Lambda})$, denoted by ${\rm I}C_+(X_{/\Lambda})$, for any definable subassignment $X_{/\Lambda}$. The main idea here is that integrability conditions should not change under pull-backs along isometries and under maps obtained from the identity function by adding parameters.
 Consider $\varphi$ in $\cC^{\leq d}_+(X_{/\Lambda})$ and its image $\overline \varphi$ in $C^d_+(X_{/\Lambda})$ for some definable subassignment $X_{/\Lambda}$ over $\Lambda$.
  By Lemma \ref{lemisom}, we can write $X$ as a disjoint union of definable subassignments $X_1$, $X_2$ such that there exists a definable morphism $f=f_2\circ f_1: Z\subset \Lambda[d,m,r]\to X_2$ for some $m,r$, $\11_{X_1}\varphi = 0$, $f_2$ is an isometry over $\Lambda$, and $f_1$ is obtained from the identity function $X_2\to X_2$ by adding parameters.
 Call $\overline\varphi$ integrable if and only if $f^*(\varphi)$ is $\Lambda$-integrable as in Lemma-Definition \ref{generalint}. Note that this condition is independent of the choice of the $X_i$ and $f$, by the existence of common refinements.
This defines the grade $d$ part ${\rm I}C^d_+(X_{/\Lambda})$ of ${\rm I}C_+(X_{/\Lambda})$, and one sets
$$
{\rm I}C_+(X_{/\Lambda}) := \sum_{d\geq 0}{\rm I}C^d_+(X_{/\Lambda}).
$$

\par
The following theorem gives the existence and uniqueness of integration in the fibers relative over $\Lambda$ (in all relative dimensions over $\Lambda$), in the form of a direct image formalism, by associating to any morphim $f:Y\to Z$ in $\Def_\Lambda$ a morphism of semi-groups $f_!$ from ${\rm I}C_+(Y_{/\Lambda})$ to ${\rm I}C_+(Z_{/\Lambda})$. This association happens to be a functor and the map $f_!$ sends a function to its integral in the fibers relative over $\Lambda$ (in the correct relative dimensions over $\Lambda$). The underlying idea is that isometries, inclusions, and definable morphisms obtained by adding parameters from an identity map should yield a trivial $f_!$ coming from the inverse of the pullback $f^*$, and further there is a change of variables situation and a Fubini-Tonelli situation that should behave as in Section \ref{fp}.

\begin{theorem}\label{di}
There exists a unique functor from $\Def_\Lambda$ to the category of semi-groups, which sends an object $Z$ in $\Def_\Lambda$ to the semi-group ${\rm I}C_+(Z_{/\Lambda})$, and a definable morphism $f:Y\to Z$ to a semi-group homomorphism $f_!:{\rm I}C_+(Y_{/\Lambda})\to {\rm I}C_+(Z_{/\Lambda})$, such that, for $\varphi$ in ${\rm I}C^d_+(Y_{/\Lambda})$ and a representative $\varphi^0$ in $\cC^{\leq d}_+(Y_{/\Lambda})$ of $\varphi$ one has:

\begin{itemize}

\item[]{{\rm M1 (Basic maps):}}

If $f$ is either an isometry or is obtained from an identity map $C\to C$ for some $C$ in $\Def$ by adding parameters, then $f_!(\varphi)$ is the class in ${\rm I}C^d_+(Z_{/\Lambda})$ of
$(f^{-1})^*(\varphi^0).$

\item[]{{\rm M2 (Inclusions):}}

If $Y\subset Z$ and $f$ is the inclusion function, then  $f_!(\varphi)$ is the class in ${\rm I}C^d_+(Z_{/\Lambda})$ of the unique $\psi$ in $\cC^{\leq d}_+(Z_{/\Lambda})$ with $f^*(\psi)=\varphi^0$ and $\psi\11_Y=\psi$.

\item[]{{\rm M3 (Fubini-Tonelli):}}

If $f: Y = \Lambda[d,m,r]\to Z = \Lambda[d-d',m-m',r-r']$ is a coordinate projection, then $\varphi^0$ can be taken by Theorem \ref{FTon} such that it is $\Lambda[d-d',m-m',r-r']$-integrable and then $f_!(\varphi)$ is the class in $IC^{d-d'}_+(\Lambda[d-d',m-m',r-r'] _{/\Lambda})$ of
$$
\mu_{/\Lambda[d-d',m-m',r-r']}(\varphi^0).
$$

\item[]{{\rm M4 (Change of variables):}}

If $f$ is a definable isomorphism over $\Lambda[0,m,r]$ with $Y\subset \Lambda[d,m,r]$ and $Z\subset \Lambda[d,m,r]$ then $\varphi^0$ and a $\psi\in \cC^{\leq d}_+(Y_{/\Lambda})$ can be taken by Theorem \ref{cov} such that $\varphi^0= \psi\LL^{-\ordjac_{/\Lambda[0,m,r]} f}$, and then $f_!(\varphi)$ is the class in ${\rm I}C^d_+(Z_{/\Lambda})$ of $(f^{-1})^*(\psi)$.
\end{itemize}
\end{theorem}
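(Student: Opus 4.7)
The plan is to reduce the construction to compositions of the four basic types M1--M4 by means of the canonical factorization of an arbitrary morphism $f: Y \to Z$ in $\Def_\Lambda$ through its graph, $Y \xrightarrow{\gamma_f} \Gamma_f \hookrightarrow Y \times_\Lambda Z \xrightarrow{p_Z} Z$, where $\gamma_f$ is a definable isomorphism (to be handled by M4), the middle arrow is an inclusion (M2), and $p_Z$ is a coordinate projection (M3, after first transporting $Y \times_\Lambda Z$ into some $\Lambda[d,m,r]$ via Lemma \ref{lemisom}, which is of type M1). Applying Lemma \ref{lemisom} in a relative form allows one to present any definable subassignment, up to a subassignment of strictly smaller relative dimension, as the image of $\Lambda[d,m,r]$ under an isometry composed with an ``add parameters'' map. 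With this in hand, M1--M4 force the value of $f_!(\varphi)$ on any representative $\varphi^0$, which both supplies the construction and handles uniqueness, modulo the verification that different presentations give the same result.

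For existence I will define $f_!$ piecewise via the formulas dictated by M1--M4 and verify independence of the presentation. The three key ingredients are the projection formula (Proposition \ref{proj}), the relative change of variables formula (Theorem \ref{cov}), and the Fubini--Tonelli theorem (Theorem \ref{FTon}), together with the split property of $\cT$. The projection formula yields $\cC_+(\Lambda)$-linearity and compatibility with inclusions; the change of variables formula shows that a graph embedding can be replaced by any other parametrization of the same subassignment at the cost of a factor $\LL^{-\ordjac}$; Fubini--Tonelli ensures that iterated integration along a tower of coordinate projections is independent of the order in which the valued-field, residue-ring and value-group coordinates are handled. Compatibility with the grading $C_+ = \bigoplus_d C_+^d$ comes essentially for free, since each basic operation either preserves the relative dimension on the generic locus or modifies the underlying subassignment only along a piece of strictly smaller relative dimension, which is killed in the quotient defining $C_+^d$.

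Functoriality, $(h \circ g)_! = h_! \circ g_!$, will then reduce to showing that any two decompositions of $h \circ g$ into basic pieces can be connected by a chain of elementary moves already covered by the invariance argument; this is essentially an application of the chain rule for Jacobians combined with Fubini--Tonelli. The main obstacle will be exactly this invariance under change of presentation: although each individual verification is short given Theorems \ref{cov} and \ref{FTon}, one must systematically check all combinations of M1--M4 on overlaps, along the lines of Section 10 of \cite{CL}, but more directly here since the explicit step-by-step definitions of the integrals in Sections \ref{piint}--\ref{genint} bypass the bi-cell existence--uniqueness framework used in loc.~cit.
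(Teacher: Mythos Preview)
Your proposal is correct and follows essentially the same strategy as the paper: factor an arbitrary morphism into pieces of types M1--M4 using Lemma~\ref{lemisom}, define $f_!$ via the formulas dictated by these types, and check independence of the factorization using Theorems~\ref{cov} and~\ref{FTon} together with the projection formula. The paper's own proof is in fact little more than this outline, phrased more tersely and without committing to a specific factorization such as your graph decomposition; your explicit canonical factorization through $\Gamma_f$ is a reasonable way to make the argument concrete, though you should be aware that $\gamma_f$ is not literally of type M4 until you have first used Lemma~\ref{lemisom} on both ends to land in a common $\Lambda[d,m,r]$, which you do acknowledge.
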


\begin{proof}[Proof of Theorem \ref{di}]
Uniqueness is clear. Indeed, one can always cut into finitely many pieces to control the relative dimensions, and on such pieces there always exists a finite composition of morphisms as in the basic situations that factor the respective restrictions of $f$ (this uses Lemma \ref{lemisom}). Existence follows from the properties in the previous sections which yield that the calculations of the direct images do not depend on the way $f$ is written as a finite composition of morphisms as in the basic situations.
\end{proof}

 Theorem \ref{di} thus yields a functor from the category $\Def_{\Lambda}$ to the category with objects ${\rm I}C_+(Z_{/\Lambda})$ and with homomorphisms of semi-groups (or even of semi-modules over $\cC_+(\Lambda)$ ) as morphisms.
 This functor is an embedding (that is, injective on objects and on morphisms).
 The functoriality property $(g\circ f)_! = g_!\circ f_!$ is a flexible form of a Fubini Theorem.

Note that $C_+(X_{/\Lambda})$ is a graded $\cC_+(X)$-semi-module (but not so for  ${\rm I}C_+(X_{/\Lambda})$ which is just a graded $\cC_+(\Lambda)$-semi-module).
Using this module structure, we can formulate the following form of the projection formula.
\begin{prop}
For every morphism $f : Y \rightarrow Z$ in
$\Def_\Lambda$, and every $\alpha$ in $\cC_+ (Z)$
and $\beta$ in ${\rm I}C_+ (Y_{/\Lambda})$,
$\alpha f_! (\beta)$ belongs to ${\rm I}C_+ (Z_{/\Lambda})$
if and only if $f^*(\alpha) \beta$ is in ${\rm I}C_+ (Y_{/\Lambda})$.
If these conditions are verified, then
$f_! (f^*(\alpha) \beta) = \alpha f_! (\beta)$.
\end{prop}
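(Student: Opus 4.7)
The plan is to reduce the general projection formula to the four basic cases M1--M4 that define the functor $f_!$ in Theorem \ref{di}, and in each case to invoke either triviality or the projection formula already established in Proposition \ref{proj} for the integration map $\mu_{/X}$.

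First I would cut $Y$ and $Z$ into finitely many definable pieces over $\Lambda$ on which the relative dimensions are under control, and then invoke Lemma \ref{lemisom} (together with the argument in the uniqueness part of the proof of Theorem \ref{di}) to write the restriction of $f$ to each piece as a finite composition $f = g_k \circ \cdots \circ g_1$ where each $g_i$ is of one of the four basic types. By functoriality $f_! = g_{k,!} \circ \cdots \circ g_{1,!}$ and $f^* = g_1^* \circ \cdots \circ g_k^*$, so a straightforward induction on $k$ shows that it suffices to prove the projection formula separately in each of the cases M1, M2, M3, M4.

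For M1 (isometries and morphisms obtained by adding parameters) and M4 (change of variables) the map $g_!$ is represented, at the level of chosen representatives in $\cC_+^{\leq d}$, by pullback along $g^{-1}$ possibly multiplied by a factor $\LL^{-\ordjac}$ depending only on variables on the source. In both situations one has $g^*(\alpha\, g_!(\beta)) = g^*(\alpha)\, \beta$ (up to the Jacobian factor, which commutes with multiplication by $g^*(\alpha)$), so that $\alpha g_!(\beta)$ is represented by the class of the appropriate object precisely when $g^*(\alpha)\beta$ is, and the two integrability conditions are equivalent by definition. For M2 (inclusion of a subassignment $Y \subset Z$) the statement is immediate: $g_!(\beta)$ is the extension by zero, so $\alpha g_!(\beta)$ is the extension by zero of $g^*(\alpha)\beta$, and again the two integrability conditions coincide.

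The main case, and the only one requiring actual work, is M3, where $g : \Lambda[d,m,r] \to \Lambda[d-d',m-m',r-r']$ is a coordinate projection. Here I would pick a representative $\beta^0 \in \cC_+^{\leq d}(Y_{/\Lambda})$ that is $\Lambda[d-d',m-m',r-r']$-integrable (as in M3), so that $g_!(\beta)$ is represented by $\mu_{/\Lambda[d-d',m-m',r-r']}(\beta^0)$, and then apply Proposition \ref{proj} to $\beta^0$ and $\alpha$ in the role of $\psi$ over the base $X := \Lambda[d-d',m-m',r-r']$. Proposition \ref{proj} states exactly that $g^*(\alpha)\beta^0$ is $X$-integrable iff $\alpha\mu_{/X}(\beta^0)$ is, and that
\[
\mu_{/X}(g^*(\alpha)\beta^0) = \alpha\,\mu_{/X}(\beta^0).
\]
The remaining task is to match the $\Lambda$-integrability conditions defining ${\rm I}C_+$ on both sides; this is done by composing the above with the integration $\mu_{/\Lambda}$ and applying the Fubini--Tonelli theorem (Theorem \ref{FTon}) to switch between $\Lambda$-integrability of $g^*(\alpha)\beta^0$ and $X$-integrability followed by $\Lambda$-integrability of the $X$-integral, which is precisely the definition of ${\rm I}C_+$ through Lemma \ref{lemisom}. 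The only subtle point, which I expect to be the main bookkeeping obstacle, is checking that representatives of $\alpha$ and $\beta$ can be chosen simultaneously so that everything fits into the graded pieces $C_+^d$ in a compatible way, but this is handled by throwing away a subassignment of strictly smaller relative dimension, which does not affect classes modulo $\cC_+^{\leq d-1}$.
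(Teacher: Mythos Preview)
Your approach is essentially the same as the paper's: factor $f$ into the basic situations M1--M4 via the proof of Theorem \ref{di} and verify the projection formula at each stage using Proposition \ref{proj}. The paper's own proof is a single sentence saying exactly this, so you have simply spelled out in more detail what the paper leaves implicit. One small imprecision: Proposition \ref{proj} does not literally state an ``iff'' about $X$-integrability (it assumes $\beta^0$ is $X$-integrable and concludes $g^*(\alpha)\beta^0$ is), but since in the M3 situation $\beta^0$ is already chosen $X$-integrable, the equality $\mu_{/X}(g^*(\alpha)\beta^0)=\alpha\,\mu_{/X}(\beta^0)$ together with Fubini--Tonelli yields the required equivalence of $\Lambda$-integrability conditions, exactly as you go on to say.
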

\begin{proof}
Follows from (the proof of) Theorem \ref{di} and Proposition \ref{proj}.  Indeed, if one factors $f$ as in the proof Theorem \ref{di} into a finite composition of morphism as in the basic situations, then the projection formula holds compatibly at each factor in the composition by Proposition \ref{proj}.
\end{proof}
The analogy with the direct image formalism of Theorem 14.1.1 of \cite{CL} with $S=\Lambda$ is now complete. An important ingredient of the proof of Theorem \ref{di} is that general definable morphisms can be factored, at least piecewise, into definable morphisms of the specified simple types falling under M1 up to  M4.

\section{Motivic integration on varieties with volume forms}

We implement motivic integration on rigid and on algebraic varieties with volume forms in our framework, leading to natural change of variables formulas and Fubini statements. In the respective cases, we compare our motivic integrals with integrals of \cite{LSeb} on rigid varieties and with integrals from the survey paper \cite{NiSe-survey} on algebraic varieties equipped with gauge forms. 



\subsection{Motivic integration on rigid varieties}
Let
$R=\cO_K$ be a complete discrete valuation ring with fraction field $K$ and
perfect residue field
$k$.
Let $X$ be a smooth 
quasi-compact and separated rigid variety over $K$ of dimension $n$ and let $\omega$ be a differential form of degree $n$ on $X$.
Write $p$ for the characteristic of $k$, and $e$ for the ramification if $p>0$ and $e=0$ if $p=0$, so that $K$ is a $(0,p,e)$-field.
Let $\cT$ be the analytic theory $\cT_{K}$ in the language $\cL_{K}$ as in Examples \ref{as} and \ref{exts} of Section \ref{exs}.
Note that one can naturally consider $X(L)$ for any complete $\cT$-field $L$.
Thus, by Lemma \ref{compl} and using a finite covering by affinoids, we can consider $X$ as a definable subassignment and we may also consider definable subassignments $Y\subset X$, as well as definable morphisms on $X$, and so on. This is clearly independent of the choice of finite covering of $X$ by affinoids.
Likewise, we can consider $\cC_+(X)$ by using any finite covering by affinoids.
Let $\varphi$ be in $\cC_+(X)$.
By Lemma \ref{lemisom} and using a finite affinoid covering, $X$ admits a finite covering by disjoint definable subassignments
$U_i$, equipped with definable isomorphisms
$g_i : O_i \to U_i$ with $O_i$ definable subassignements
of some $h[n, m_i, n_i]$.
One writes the pullback by $g_i$ of the restriction of $\omega$ to $U_i$
as $f_i dx_1\wedge \ldots \wedge dx_n$ with
$f_i$  a definable morphism to $h[1,0,0]$ (the $f_i$ are well defined only up to definable subassignments of dimension $< n$). Briefly, for obtaining  $f_i$, one may use the notions of $L$-analycity and $L$-analytic forms of \cite{Bour} to find $f_{iL}$ for any complete $\cT$-field $L$ and combine with Lemma \ref{compl}; alternatively to using analicity, the Jacobian property can be exploited to yield the same $f_i$, again up to definable subassignments of dimension $< n$. Similarly, denote the pullback by $g_i$ of the restriction of $\varphi$ to $U_i$ by $\psi_i$.
If $\psi_i\LL^{-\ord f_i}$ is integrable on $O_i$ for each $i$, with the convention that $\LL^{-\ord(0)}=0$, then we call $\varphi$ integrable on $X$ for the motivic measure associated to $\omega$, and then we define $\int_X\varphi |\omega|$ in $\cC_+(\mathrm{point})$ as the finite sum
$$
\sum_i \mu(\psi_i \LL^{-\ord f_i}),
$$
for the motivic measure $\mu$ as in section \ref{genint}.
That this is well defined follows from Theorem \ref{cov}, the motivic change of variables formula.

We will now link these integral $\int_X \varphi \vert \omega \vert$ to the integrals as defined in \cite{LSeb} for the smooth case with gauge form.
Let $K_0 (\Var_k )$ be the Grothendieck ring of varieties over $k$, moded out by the extra relations $[X] = [f (X)]$, for $f : X \to Y$
radicial. Note that radicial means that for every algebraically closed field $\ell$ over $k$  the induced map $X(\ell)\to Y(\ell)$ is injective, and that $[f (X)]$ is an abbreviation for the class of the constructible set given as the image of $f$ by Chevalley's Theorem. (In the case that $p=0$, the extra relations are redundant.) In any case, this ring is isomorphic to the Grothendieck ring of
definable sets with coefficients from $k$ for the theory of algebraically closed fields in the language of rings.

Similarly as the morphism $\gamma$ as in Section 16.3  of
\cite{CL}, there is a canonical morphism
$\delta : \cC_+ (\mathrm{point}) \to K_0 (\Var_k ) \otimes \AA$. Indeed, we may note that $\cC_+ (\mathrm{point}) $ is isomorphic to $\AA_+\otimes_\ZZ \cQ_+(\mathrm{point})$, and for $x=\sum_{i=1}^r a_i\otimes b_i$ with $a_i\in \AA_+$ and $b_i\in \cQ_+ (\mathrm{point})$, we may set $\delta(x) = \sum_{i} a_i \otimes [b_i]$, where $[b_i]$ is the class in $K_0 (\Var_k )$ of the constructible set obtained from $b_i$, essentially, by elimination of quantifiers for the theory of algebraically closed fields in the language of rings (Chevalley's Theorem). Technically, there are two ways to specify precisely how $\delta$ is given by quantifier elimination, both yielding the same statements for the propositions below, and where the subtleties come  from the presence of higher order residue rings sorts in $\cL_{\rm high}$. In the first alternative, one notes that only the residue field sort is needed in both propositions below and in the formulas (\ref{formula}) and (\ref{formula'}), and no other residue ring sort is needed to state and prove the propositions below. The (partial) definition of $\delta$ for such objects is clear by Chevalley's Theorem. In the second alternative, one wants to describe $\delta$ also when higher order residue rings are involved. In the mixed characteristic case, this is done using the remark for perfect residue fields at the end of section \ref{Q+} (the perfectness condition is void for algebraically closed residue fields). In the equicharacteristic zero case, one proceeds by identifying any complete $\cT$-field with $k_K((\pi))$, where $k_K$ is the residue field and $\pi$ the uniformizer given via the $\cL$-structure. The residue ring $k_K[\pi_K]/(\pi_K)^n$ for $n>1$ is then naturally identified with $k_K^n$ by choosing the vector space basis consisting of $1,\ldots,\pi_K^{n-1}$, and definable sets are thus mapped to definable sets in the one-sorted ring language; this clearly preserves definable bijections and one is again done by Chevalley's Theorem.


Suppose now that $\omega$ is a gauge form.
Using a weak N{\'e}ron model $\mX$ of $X$, an integral
$\int^{LS}_X \vert \omega \vert$
in the localization of  $K_0 (\Var_k)$ with respect to the class of
the affine line is defined in \cite{LSeb}.
Hence we may consider the image of $\int^{LS}_X \vert
\omega \vert$
in the further localization
$K_0 (\Var_k ) \otimes \AA$.

\begin{prop}\label{compa}
Let $X$ be a smooth, quasi-compact and separated rigid variety over $K$ endowed with a gauge form
$\omega$.
Then, with the above notation,
$\delta (\int_X \vert \omega \vert)$
is equal to the image of
$\int^{LS}_X \vert \omega \vert$ in $K_0 (\Var_k ) \otimes \AA$.
\end{prop}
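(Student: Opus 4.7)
The plan is to compute $\int_X |\omega|$ using a weak N\'eron model of $X$ and to match, chart by chart, the result with the expression for $\int^{LS}_X |\omega|$ from \cite{LSeb} after applying $\delta$.

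First I would invoke the existence theorem for weak N\'eron models (Bosch-L\"utkebohmert-Raynaud) to produce a smooth quasi-compact formal $R$-scheme $\mX$ together with an open immersion $\mX^{rig}\hookrightarrow X$ inducing a bijection on unramified points. Cover $\mX$ by finitely many formal open subschemes $\mathfrak{U}_\alpha$ admitting \'etale morphisms to $\Spf R\{x_1,\ldots,x_n\}$. By Lemma \ref{compl} this data yields a finite disjoint definable partition of $X$, up to a subassignment of dimension $<n$ (which is invisible in the definition of $\int_X|\omega|$), together with definable isomorphisms $g_\alpha : O_\alpha \to U_\alpha \subset X$ of the form required by that definition.

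Next, on each chart $\omega$ pulls back to $u_\alpha\,\pi^{m_\alpha}\,dx_1\wedge\cdots\wedge dx_n$, where $u_\alpha$ is a unit on $\mathfrak{U}_\alpha^{rig}$ (because $\omega$ is a gauge form and $\mX$ is smooth) and $m_\alpha=\ord_{C_\alpha}\omega$, with $C_\alpha$ the component of $\mX_s$ into which $\mathfrak{U}_{\alpha,s}$ reduces. Thus $\ord f_\alpha=m_\alpha$ is constant on $U_\alpha$ and by Proposition \ref{proj} the chart contributes $\LL^{-m_\alpha}\mu(U_\alpha)$ to $\int_X|\omega|$. Using the reduction map to $\mathfrak{U}_{\alpha,s}$, the orthogonality of residue rings and value group (Proposition \ref{peprod0}) together with Fubini-Tonelli (Theorem \ref{FTon}), the measure $\mu(U_\alpha)$ factors into a class in $\cQ_+(\mathrm{point})$ whose image under $\delta$ is the class in $K_0(\Var_k)$ of the corresponding piece $V_\alpha$ of $\mathfrak{U}_{\alpha,s}$, multiplied by residue-disk volume factors. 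Summing over $\alpha$ and grouping by the components $C$ of $\mX_s$ recovers exactly the expression defining $\int^{LS}_X|\omega|$ in $K_0(\Var_k)\otimes\AA$, as in \cite{LSeb}.

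The main obstacle will be making the translation between the formal-rigid geometry of $\mX$ and the definable-subassignment framework fully precise, particularly in mixed characteristic where higher-order angular components must be tracked. Concretely, one must check that the residue disk decomposition induced by $\mX$ can be encoded as a definable partition of each $U_\alpha$ whose image under $\delta$ really does capture the class of the corresponding smooth $k$-scheme; this uses the second description of $\delta$ from the discussion preceding the proposition (identifying complete $\cT$-fields with $k_K((\pi))$ in equicharacteristic zero, or invoking the perfectness remark at the end of Section \ref{rdbis} in mixed characteristic). Independence on the motivic side from the choice of weak N\'eron model, of \'etale charts, and of the partition, follows from the change-of-variables formula (Theorem \ref{cov}) together with the cosmetic dimension-drop tolerance built into the definition of $\int_X|\omega|$; on the Loeser-Sebag side independence is already proven in \cite{LSeb}. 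The comparison then reduces to a single direct computation on one \'etale chart where $\omega$ has the standard form above.
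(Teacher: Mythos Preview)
Your approach is essentially the same as the paper's: both use a weak N\'eron model $\mX$, exploit smoothness to obtain \'etale charts on which $\omega$ has the standard form $\varpi^{\ord_C\omega}u\,dx_1\wedge\cdots\wedge dx_n$, and then reduce to a Fubini--Tonelli computation matching the formula $\int_X|\omega|=\LL^{-\dim X}\sum_{C}[C]\LL^{-\ord_C\omega}$ against the definition of $\int_X^{LS}|\omega|$ from \cite{LSeb}.

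The only organizational difference is that the paper packages your chart-by-chart picture into a single definable isomorphism $\lambda:Y\subset\mX_k[n,0,0]\to X$ whose fibers over $\mX_k$ are translates of the open unit polydisc $\cM_L^n$; this is built from the reduction map $\theta:X\to\mX_k$ (itself definable via the weak N\'eron property) together with the local \'etale coordinates, rather than appealing to Lemma~\ref{lemisom}. Your ``residue-disk volume factors'' then become simply the volume $\LL^{-n}$ of each fiber, giving the $\LL^{-\dim X}$ prefactor directly. One small point to tighten: an \'etale morphism $\mathfrak U_\alpha\to\Spf R\{x\}$ is not itself a definable isomorphism onto an open in $h[n,0,0]$; what you need (and what the paper uses implicitly) is the Hensel-lift argument that, over each point of the special fiber, the \'etale coordinates identify the residue disc with a translate of $\cM_L^n$.
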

\begin{proof}
Let $\mX$ be a weak N\'eron model for
$X$. For every connected component $C$ of $\mX_k=\mX\times_R
 k$, we denote by $\ord_C\omega$ the order of $\omega$ along $C$.
 If $\varpi$ is a uniformizer in $R$, then $\ord_C\omega$ is the
 unique integer $n$ such that $\varpi^{-n}\omega$ extends to a
 generator of
 $\Omega^{\dim(X)}_{\mX/R}$ at the generic point of $C$.
By Proposition 4.3.1 of \cite{LSeb},
it is enough to prove that
\begin{equation}\label{formula}
\int_X|\omega|=\LL^{-\dim(X)}\, \sum_{C\in
 \pi_0(\mX_k)}[C]\LL^{-\ord_C \omega},
\end{equation}
where $\pi_0(\mX_k)$ denotes the set of connected components of $\mX_k$.
Let $L$ be
a complete $(0,p,e)$-field
containing $K$. We denote by $\cO_L$  and $k_L$ the corresponding
valuation ring and residue field.
Since $L / K$ is unramified,
the canonical map $\mX(\cO_L)\to X(L)$ is a bijection by the weak N\'eron model property if $L$ is algebraic over $K$, and by base change properties if $L$ is not algebraic over $K$.
We denote by
$\pi_L :  X  (L) \to \mX(\cO_L)$ the inverse of that map.
By composing $\pi_L$ with the reduction map
$\mX (\cO_L) \to \mX_k (k_L)$, one gets a map
$\theta_L : X (L) \to \mX_k (k_L)$.
The maps $\theta_L$ are induced by a definable morphism
$\theta : X \to \mX_k $.
Since $\mX$ is smooth over $R$, for every point $a$ of $X$
there exist an open affinoid neighbourhood $Z$ of $a$ in $\mX$ and an \'etale morphism of affinoids
$h : Z \to \AA^n_R$.
Since moreover $\omega$ is a gauge form, it follows that there exists a definable isomorphism
$\lambda : Y\subset \mX_k[n,0,0] \to X$ such that, for each complete $\cT$-field $L$, any fiber of the projection $Y(L)\to \mX_k(k_L)$ is a translate of the open unit ball $\cM_L^n$ by some element in $\cO_L^n$,
$\theta \circ \lambda$ is the projection on the first factor, and, for any connected component $C$ of $\mX_k$,
the restriction
of $\lambda^* (\omega)$ to $Y\cap (C[n,0,0])$  is of the form
$\varpi^{\ord_C \omega} u dx_1 \wedge \cdots \wedge dx_n$
with $x_1$, \dots, $x_n$ the standard coordinates on
$h [n, 0, 0]$ and $u$ a definable morphism on $\mX_k \times B$ with $\ord(u)$ constantly zero.
The equality (\ref{formula}) now follows by an application of the Fubini-Tonelli \ref{FTon} to integrate $\LL^{-\ord_C \omega}$ over $Y\cap C[n,0,0]$, which is calculated using a finite affine cover of $C$, and summing over $C\in \pi_0(\mX_k)$.
\end{proof}

\subsection{Motivic integration on varieties with volume forms}

On algebraic varieties with volume forms, one proceeds similarly; we give details for the convenience of the reader. Let $K_0$ be a subfield of a $(0,p,e)$-field for some $p,e$, and let $\cT$ and $\cL$ be as in example \ref{sas} of Section \ref{exs} with $K_0$ playing the role of $R_0$.
Let $X$ be an algebraic variety defined over $K_0$ of dimension $n$ and let $\omega$ be a differential form of degree $n$ on $X$ (also called a volume form).
Considering $X(L)$ for any $\cT$-field $L$, and  using finite coverings by affine subvarieties, we can again consider $X$ as a definable subassignment and likewise for $\cC_+(X)$ and so on.
Let $\varphi$ be in $\cC_+(X)$.
By Lemma \ref{lemisom} and using a finite covering by affine subvarieties, $X$ admits a finite covering by disjoint definable subassignments
$U_i$, equipped with definable isomorphisms
$g_i : O_i \to U_i$ with $O_i$ definable subassignements
of some $h[n, m_i, n_i]$.
One writes the pullback by $g_i$ of the restriction of $\omega$ to $U_i$
as $f_i dx_1\wedge \ldots \wedge dx_n$ with
$f_i$  a definable morphism to $h[1,0,0]$ (the $f_i$ are well defined only up to definable subassignments of dimension $< n$). Similarly, denote the pullback by $g_i$ of the restriction of $\varphi$ to $U_i$ by $\psi_i$.
If $\psi_i\LL^{-\ord f_i}$ is integrable on $O_i$ for each $i$, with the convention that $\LL^{-\ord(0)}=0$, then we call $\varphi$ integrable on $X$ for the motivic measure associated to $\omega$, and we define $\int_X\varphi |\omega|$ in $\cC_+(\mathrm{point})$ as the finite sum
$$
\sum_i \mu(\psi_i \LL^{-\ord f_i}),
$$
with $\mu$ as in section \ref{genint}.
That this is well defined follows again from Theorem \ref{cov}.

We will prove a change of variables result, a Fubini theorem, and  give a link to the integrals from \cite{LSeb}, \cite{NiSe-survey}.

\begin{prop}[Change of variables]\label{covforms}
Let $f:X\to Y$ be a morphism of varieties of dimension $n$, defined over $K_0$. Let $\varphi$ be in $\cC_+(Y)$ and let $\omega_Y$ be a volume form on $Y$. Suppose that there is a definable subassignment $Z\subset X$ such that the restriction of $f$ to $Z$ is a definable isomorphism from $Z$ onto $f(Z)$ and such that $\11_{f(Z)}\varphi = \varphi$. Then one has that $\varphi$ is integrable on $Y$ for the motivic measure associated to $\omega_Y$ if and only if $\11_Z f^{*}(\varphi)$ is integrable on $X$ for the motivic measure associated to $|f^*(\omega_Y)|$ and, in that case,
$$
\int_X \11_Z f^{*}(\varphi) |f^*(\omega_Y)| = \int_{Y} \varphi |(\omega_Y)|.
$$
\end{prop}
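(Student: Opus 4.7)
The approach is to compute both sides of the equality by choosing compatible charts on $X$ and $Y$, so that the two integrands match term by term, and to invoke well-definedness of $\int_X$ (which rests on Theorem \ref{cov}, as already used to justify the definition of the motivic integral with a volume form) to legitimize the chart chosen on $X$.

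First I would fix a finite covering of $Y$ by disjoint definable subassignments $\{U_i\}$ equipped with definable isomorphisms $g_i : O_i \to U_i$ with $O_i \subset h[n, m_i, n_i]$, as used to define $\int_Y \varphi |\omega_Y|$, and write $g_i^*(\omega_Y|_{U_i}) = f_i\, dx_1 \wedge \cdots \wedge dx_n$ and $\psi_i := g_i^*(\varphi|_{U_i})$. After refining by intersecting with the definable subassignment $f(Z)$ (which is definable because $f|_Z$ is a definable isomorphism) and its complement, I may assume each $U_i$ either lies in $f(Z)$ or is disjoint from it; the latter contribute nothing to $\int_Y \varphi |\omega_Y|$ since $\11_{f(Z)}\varphi = \varphi$.

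For each $U_i \subset f(Z)$, set $V_i := (f|_Z)^{-1}(U_i) \subset Z$ and $h_i := (f|_Z)^{-1} \circ g_i : O_i \to V_i$, which is again a definable isomorphism from a subassignment of $h[n, m_i, n_i]$. I would complete $\{V_i\}$ to a disjoint covering of $X$ by adjoining definable subassignments of $X \setminus Z$ (with charts produced by Lemma \ref{lemisom}); on those extra pieces the function $\11_Z f^*(\varphi)$ vanishes identically, so they contribute zero. By functoriality of pullback of differential forms,
\[
h_i^*(f^*(\omega_Y)) \,=\, (f \circ h_i)^*(\omega_Y) \,=\, g_i^*(\omega_Y) \,=\, f_i\, dx_1 \wedge \cdots \wedge dx_n,
\]
and $h_i^*(\11_Z f^*(\varphi)) = \psi_i$. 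Hence both sides of the proposition unfold by definition into the same finite sum $\sum_i \mu(\psi_i \LL^{-\ord f_i})$, which yields the integrability equivalence and the equality simultaneously.

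The main obstacle is to justify that this particular choice of cover on the $X$-side produces the correctly defined value of $\int_X \11_Z f^*(\varphi)|f^*(\omega_Y)|$, independent of chart choice: this is precisely the content of the well-definedness assertion recorded at the time of the definition of the motivic integral with a volume form (where Theorem \ref{cov}, the motivic change of variables, is invoked). A secondary technical issue is to make sure the refinement respects an underlying finite cover of $X$ and of $Y$ by affine subvarieties, which is harmless by quasi-compactness of both algebraic varieties and the compatibility of $f$ with such covers.
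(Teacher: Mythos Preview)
Your proposal is correct and follows essentially the same route as the paper, which simply says: pass to affine charts, invoke Lemma \ref{lemisom} and the definitions, and reduce to Theorem \ref{cov} via the chain rule. Your version is a more explicit unpacking of that sketch; by pulling the $Y$-charts back along $f|_Z$ to produce the $X$-charts you make the two sums literally coincide, so the chain rule is absorbed into the well-definedness statement for $\int_X$ (itself resting on Theorem \ref{cov}) rather than invoked directly, but this is a presentational rather than a substantive difference.
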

\begin{proof}
By working on affine charts, we may suppose that $X$ and $Y$ are affine. By Lemma \ref{lemisom}, the above definitions, and the chain rule for derivation, the Proposition reduces to Theorem \ref{cov}.
\end{proof}

\begin{prop}[Fubini]\label{Fubforms}
Let $f:X\to Y$ be a morphism of varieties defined over $K_0$. Suppose that $X$ is of dimension $n+d$ and $Y$ is dimension $n$. Let $\varphi$ be in $\cC_+(X)$, let $\omega_Y$ be a volume form on $Y$, and $\omega_X$ a volume form on $X$. Further let $\omega$ be a differential form of degree $d$ on $X$. Suppose that there is a definable subassignment $Z\subset X$ such that, for each $\cT$-field $K$ and each point $z=(z_0,K)$ in $Z$, one has, at the stalk at $z_0 \in X(K)$, that $f^{*}(\omega_Y)\wedge\omega = \omega_X$.
Suppose moreover that $\11_{Z}\varphi = \varphi$.
Then there exists $\psi$ in $\cC_+(Y)$ such that,
for each $\cT$-field $K$ and each point $y=(y_0,K)$ in $Y$,
$$
i_y^*(\psi) =   \int_{X_y} \varphi_{|X_y} |\omega_{|X_y}|
$$
and
$$
\int_Y \psi |\omega_Y| = \int_{X} \varphi | \omega_X |,
$$
where $X_y$ is the reduced subvariety defined over $K$ associated to $f^{-1}(y_0)$,  $\omega_{|X_y}$ the restriction of $\omega$ to $X_y$,
and $\varphi_{|X_y}$ the restriction of $\varphi$ to $X_y$ (using $\cL(K)$ instead of $\cL$ as in Section \ref{eval}).
\end{prop}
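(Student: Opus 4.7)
The plan is to deduce Proposition \ref{Fubforms} from the Fubini--Tonelli theorem \ref{FTon} by choosing definable charts on $X$ adapted to $f$. First I would cover $Y$ by finitely many affine opens $V_\beta$ and cover each $f^{-1}(V_\beta)$ in $X$ by finitely many affine opens $U_{\beta,\gamma}$. Applying Lemma \ref{lemisom} to $V_\beta$ produces a finite disjoint decomposition into pieces $V_{\beta,\alpha}'$ with definable isomorphisms $g_{\beta,\alpha} : O_{\beta,\alpha} \to V_{\beta,\alpha}'$, where $O_{\beta,\alpha} \subset h[n,m,r]$; in the resulting chart one writes $g_{\beta,\alpha}^{*}\omega_Y = h_{\beta,\alpha}\, dx_1 \wedge \cdots \wedge dx_n$. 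Then, applying Lemma \ref{lemisom} relative to $\Lambda = O_{\beta,\alpha}$ to each $U_{\beta,\gamma} \cap f^{-1}(V_{\beta,\alpha}')$ (viewed over $O_{\beta,\alpha}$ via $g_{\beta,\alpha}$ and $f$) produces compatible relative charts $\tilde g : \tilde O \to U'$ with $\tilde O \subset O_{\beta,\alpha}[d,m',r']$ and $f \circ \tilde g = g_{\beta,\alpha} \circ p$ for the coordinate projection $p : \tilde O \to O_{\beta,\alpha}$, up to a definable subassignment of strictly smaller relative dimension over $Y$.

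In such a chart one may write $\tilde g^{*}\omega_X = F\, dx_1 \wedge \cdots \wedge dx_{n+d}$ and $\tilde g^{*}\omega = G\, dx_{n+1} \wedge \cdots \wedge dx_{n+d} + R$, where the residual term $R$ involves some $dx_i$ with $i \leq n$. The hypothesis $f^{*}\omega_Y \wedge \omega = \omega_X$ along $Z$, pulled back to the chart, kills the residual term after wedging and yields the identity $(h_{\beta,\alpha} \circ p)\cdot G = F$ on $\tilde g^{-1}(Z)$. Consequently, on that subassignment,
$$
\tilde g^{*}(\varphi)\, \LL^{-\ord F} \;=\; \bigl(\tilde g^{*}(\varphi)\, \LL^{-\ord G}\bigr)\cdot p^{*}\bigl(\LL^{-\ord h_{\beta,\alpha}}\bigr),
$$
and the ambiguity outside $\tilde g^{-1}(Z)$ is invisible because $\11_Z\varphi = \varphi$. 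Applying Theorem \ref{FTon} to integrate the left-hand side over the last $d$ valued field coordinates (relative over $O_{\beta,\alpha}$) produces a function $\eta_{\beta,\alpha,\gamma} \in \cC_+(O_{\beta,\alpha})$, and the projection formula (Proposition \ref{proj}) allows the $p^{*}(\LL^{-\ord h_{\beta,\alpha}})$ factor to be moved outside the fibre integral.

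I would then assemble $\psi$ on $Y$ by transporting the $\eta_{\beta,\alpha,\gamma}$ back via $(g_{\beta,\alpha}^{-1})^{*}$, summing over $\alpha$ and $\gamma$, and gluing over $\beta$. The second equality $\int_Y \psi\, |\omega_Y| = \int_X \varphi\, |\omega_X|$ is then precisely the outer iterate of Theorem \ref{FTon} after unfolding both integrals in the charts. The pointwise identity $i_y^{*}(\psi) = \int_{X_y}\varphi_{|X_y}\,|\omega_{|X_y}|$ follows by base change to $K = K(y)$ in the sense of Section \ref{eval}: restricting $\tilde g$ to the fibre over $y_0$ yields a definable chart for the reduced subvariety $X_y$, and $\tilde g^{*}\omega$ restricts there to $G(y_0,\cdot)\, dx_{n+1} \wedge \cdots \wedge dx_{n+d}$, which is exactly the pullback of $\omega_{|X_y}$; so the inner fibre integral appearing in Fubini--Tonelli is, by the definition of motivic integrals against volume forms, the claimed $\int_{X_y}\varphi_{|X_y}|\omega_{|X_y}|$.

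The main obstacle is showing that $\psi$ is intrinsically defined, i.e.~independent of the chart systems and of the cover. This is handled by a double application of Proposition \ref{covforms}: two chart systems differ by definable isomorphisms whose relative Jacobians over $Y$ transform the $\LL^{-\ord F}$ and $\LL^{-\ord G}$ factors as prescribed by Theorem \ref{cov}, and the fibre integrals agree on overlaps outside subassignments of dimension $<n$ in $Y$, which are negligible for $\int_Y \psi\, |\omega_Y|$. A secondary technical point is that Lemma \ref{lemisom} produces charts only up to pieces of lower relative dimension, but these are absorbed either into the $<n$-dimensional negligible subassignments on $Y$ or into the complement of $Z$, and hence do not affect any of the integrals.
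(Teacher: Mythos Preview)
Your proposal is correct and follows essentially the same route as the paper: reduce to affine charts, apply Lemma \ref{lemisom} to $Y$ and then to $X$ relative to the $Y$-charts, and feed the result into Theorem \ref{FTon}. The paper's proof compresses all of this into two sentences, while you have usefully spelled out the key identity $(h_{\beta,\alpha}\circ p)\cdot G = F$ coming from $f^{*}\omega_Y\wedge\omega=\omega_X$ and the r\^ole of the projection formula; the well-definedness discussion you add is exactly what the paper alludes to in its remark that an explicit construction of $\psi$ is implicit in the proof.
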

An explicit and natural definition of a function $\psi$ as in Proposition \ref{Fubforms} is in fact also possible and is implicit in the proof.
\begin{proof}[Proof of Proposition \ref{Fubforms}]
Using affine charts one reduces to the case that $X$ and $Y$ are affine.
Now the Proposition reduces to Theorem \ref{FTon} by applying Lemma \ref{lemisom} once to $X$ over $Y$ (that is, with $Y$ in the role of $\Lambda$), and subsequently to $Y$ itself.
\end{proof}

Let us now simply write $K$ for $K_0$ and $R$ for $\cO_K$.
For a smooth variety $X$ over $K$, a weak N\' eron model for $X$ is a smooth $R$-variety
$\mX$
endowed with an isomorphism $\mX_{K} \to X$
such that moreover the natural map
$$\mX(\cO_{K'}) \to X(K')$$ is a bijection for any finite unramified extension $K'$ of $K$, where $\mX_{K}$ is $\mX \times_{R} K$. See \cite{NiSe-survey} for a more detailed treatment of these and similar notions. With the framework of this paper, we finish by computing certain motivic integrals on weak N\'eron models, similar to what is done in \cite{LSeb} and \cite{NiSe-survey}.

\begin{prop}\label{compa2}
Let $X$ be a smooth variety over $K$ endowed with a gauge form
$\omega$. Assume that $X$ admits a weak N\'eron model $\mX$.
Then one has
\begin{equation}\label{formula'}
\int_X|\omega|=\LL^{-\dim(X)}\, \sum_{C\in
 \pi_0(\mX_k)}[C]\LL^{-\ord_C \omega},
\end{equation}
where $\pi_0(\mX_k)$ denotes the set of connected components of $\mX_k$ with $\mX_k=\mX\times_R
 k$, and $\ord_C\omega$ the order of $\omega$ along $C$.
\end{prop}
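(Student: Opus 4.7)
The plan is to imitate closely the argument given for Proposition \ref{compa}. Indeed, once one views $X$ and $\mX_k$ as definable subassignments (using finite affine covers and Lemma \ref{compl}), the formula reduces to a fibration computation relative to the reduction map, which I outline below.

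First, I would set up the definable machinery. Cover $X$ and $\mX$ by finitely many affine opens and regard them as definable subassignments; this is independent of the choice of cover. Next, I would construct the reduction morphism $\theta : X \to \mX_k$. The weak N\'eron model property supplies a bijection $\mX(\cO_L) \to X(L)$ for every finite unramified extension $L/K$, and, by base change together with smoothness of $\mX/R$, this extends to a natural map $X(L) \to \mX(\cO_L) \to \mX_k(k_L)$ for any $\cT$-field $L$. These set-theoretic maps are induced by a single definable morphism $\theta$, as in the rigid case.

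The core geometric step is the construction, up to a definable subassignment of dimension $<n$, of a definable isomorphism $\lambda : Y \subset \mX_k[n,0,0] \to X$ such that $\theta \circ \lambda$ is the first projection and each fiber of the projection $Y \to \mX_k$ is an $\cO_L^n$-translate of $\cM_L^n$. I obtain this by working locally: smoothness of $\mX/R$ provides, around any point, an open affine neighborhood $Z$ admitting an \'etale morphism $h : Z \to \AA^n_R$; Hensel's lemma (valid for any $(0,p,e)$-field) then shows that, fiberwise over $\mX_k$, the points of $X$ are in definable bijection with translates of open unit balls via $h$, which is precisely what is expressed by Lemma \ref{lemisom} applied to this situation. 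Since $\omega$ is a gauge form, on each connected component $C$ of $\mX_k$ the pullback $\lambda^{*}(\omega)$ may be written as $\varpi^{\ord_{C}\omega}\, u\, dx_{1}\wedge\cdots\wedge dx_{n}$, where $\ord(u)=0$ everywhere on $Y\cap (C[n,0,0])$.

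Finally, I would compute the integral. By the definition of $\int_X |\omega|$ in the algebraic setting and by the change of variables formula (Proposition \ref{covforms}), one has
$$
\int_X |\omega| = \sum_{C\in\pi_0(\mX_k)} \int_{Y\cap C[n,0,0]} \LL^{-\ord_C \omega}.
$$
Applying Fubini-Tonelli (Theorem \ref{FTon}, or equivalently Proposition \ref{Fubforms}) to the projection $Y\cap C[n,0,0] \to C$, the inner integral over the fiber is the measure of $\cM_L^n$, which is $\LL^{-n}$. Summing over $C$ and pulling the scalar $\LL^{-\dim(X)}$ outside yields formula \eqref{formula'}. The main obstacle is the verification that the local construction of $\lambda$ via \'etale charts can be assembled into a single definable morphism with the stated fiber description, but this follows as in the proof of Proposition \ref{compa} by combining Lemma \ref{lemisom} (applied after possibly shrinking $X$ by a subassignment of smaller dimension) with Hensel's lemma and compactness; once this is in place, the remaining computation is routine.
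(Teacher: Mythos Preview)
Your proposal is correct and follows exactly the route the paper takes: the paper's own proof simply says that the argument for Proposition~\ref{compa} goes through verbatim with affine charts in place of affinoids, and you have faithfully unpacked that argument. One minor remark: in the algebraic setting with $\cL_{\rm high}(K_0)$ there is no need to invoke Lemma~\ref{compl}, since arbitrary $\cT$-fields (not just complete ones) already carry the required structure; otherwise your outline matches the paper's intended proof.
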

\begin{proof}
The same proof as for Proposition \ref{compa} goes through, using affine charts instead of affinoids.
\end{proof}

\bibliographystyle{amsplain}

\end{document}